\newtheorem{thm}{Theorem}[section]
\newtheorem{prop}[thm]{Proposition}
\newtheorem{lem}[thm]{Lemma}
\newtheorem{cor}[thm]{Corollary}
\newtheorem{conj}[thm]{Conjecture}
\newtheorem{obs}[thm]{Observation}
\newtheorem{notation}[thm]{Notation}
\newtheorem{operation}[thm]{Operation}
\newcommand{\Text}[1]{\text{\textnormal{#1}}}
\newcommand{\mF}{\mathcal{F}}
\newcommand{\mA}{\mathcal{A}}
\newcommand{\mFone}{\mathcal{F}_1}
\newcommand{\BR}{\mathbb{R}}
\theoremstyle{remark}
\newtheorem{remark}[thm]{Remark}
\theoremstyle{definition}
\newtheorem{definition}[thm]{Definition}
\begin{document}

\title{The fully marked surface theorem}
\author{David Gabai, Mehdi Yazdi}
\date{August 15 , 2020}
\thanks{Partially supported by NSF Grants DMS-1006553 and DMS-1607374.}
\maketitle

\begin{abstract}
In his seminal 1976 paper Bill Thurston observed that a closed leaf S of a foliation has Euler characteristic equal, up to sign, to the Euler class of the foliation evaluated on [S], the homology class represented by S.  The main result of this paper is a converse for taut foliations: if the Euler class of a taut foliation $\mathcal{F}$ evaluated on [S] equals up to sign the Euler characteristic of S and the underlying manifold is hyperbolic, then there exists another \emph{taut} foliation $\mathcal{F'}$ such that $S$ is homologous to a union of leaves and such that the plane field of $\mathcal{F'}$ is homotopic to that of $\mathcal{F}$. In particular, $\mathcal{F}$ and $\mathcal{F'}$ have the same Euler class.

In the same paper Thurston proved that taut foliations on closed hyperbolic 3--manifolds have Euler class of norm at most one, and conjectured that, conversely, any integral cohomology class with norm equal to one is the Euler class of a taut foliation. This is the second of two papers that together give a negative answer to Thurston's conjecture. In the first paper, counterexamples were constructed assuming the main result of this paper.
\end{abstract}

\section{introduction}

In his seminal 1976 paper Bill Thurston observed \cite{thurston1986norm} that if $S$ is a compact leaf of a foliation $\mathcal{F}$, then up to sign, the Euler characteristic of $S$ is equal to the Euler class of $\mathcal{F}$ evaluated on $[S]$ and used this to prove his celebrated result that compact leaves of taut foliations are Thurston \emph{norm-minimizing}.  In brief, let $M$ be a closed, orientable and irreducible 3-manifold and $\mathcal{F}$ be a taut foliation on $M$. By Roussarie--Thurston general position \cite{MR1732868} \cite{roussarie1974plongements} any oriented, embedded, incompressible surface $S$ in $M$ can be isotoped so that any component of $S$ is either a leaf or is transverse to $\mathcal{F}$  except for finitely many (Morse type) saddle singularities. An application of the Poincar\'{e}--Hopf index formula implies that the number of such singularities is exactly the absolute value of the Euler characteristic of $S$. We call a singularity \emph{positive (negative)} if at the point of tangency, the transverse orientation of the surface and the foliation agree (disagree).  Thurston's insight was that the Euler class evaluated on $-[S]$ equals the sum of the signs of these singularities.  In what follows all surfaces are incompressible, a requirement only relevant for tori and annuli; it is automatic for other connected fully marked surfaces in tautly foliated manifolds. A compact surface $S$ is \emph{positive (negative) fully marked} if every component of $S$ is either a leaf whose transverse orientation agrees (disagrees) with the transverse orientation of the foliation, or has only saddle singularities all of which are positive (negative). A surface is \emph{fully marked} if it is positive or negative fully marked. Let $e(\mathcal{F}) \in H^2(M; \mathbb{R})$ denote the Euler class of the tangent bundle of $\mathcal{F}$ and consider the pairing $\langle \hspace{1mm}, \hspace{1mm} \rangle$ between the second cohomology and homology of $M$. A compact surface $S$ is \emph{algebraically fully marked} if 
\[\langle e(\mathcal{F}) , [S] \rangle = \pm \chi(S).  \]
A fully marked surface with boundary should have each boundary component either transverse to the foliation or be a leaf. By the Roussarie--Thurston theorem, in a tautly foliated 3-manifold an
algebraically fully marked surface is isotopic to a fully marked surface. 

Note compact leaves of $\mathcal{F}$ (if it has any) are fully marked; and similarly a finite union of oriented, compact leaves is fully marked provided that the elements of this union (leaves) are oriented consistently. The converse, however, is not true since $\mathcal{F}$ might have no compact leaves while having fully marked surfaces. Indeed, any taut foliation of a hyperbolic 3-manifold can be perturbed to one without compact leaves, without changing the homotopy class of the plane field and hence the Euler class.   The main result of this paper gives a converse to Thurston's theorem for closed hyperbolic 3-manifolds, up to homotopy of the plane fields of the foliations. 

\begin{thm} 
Let $M$ be a closed hyperbolic 3-manifold, $\mathcal{F}$ be a taut foliation on $M$, and $S$ be an algebraically fully marked surface in $M$. There exists a surface $S'$ homologous to $S$ and a taut foliation $\mathcal{G}$ such that 
\begin{enumerate}
\item $S'$ is a union of leaves of $\mathcal{G}$.
\item The oriented plane fields tangent to $\mathcal{F}$ and $\mathcal{G}$ are homotopic.
\end{enumerate}
\label{fullymarked}
\end{thm}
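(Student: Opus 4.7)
The plan has three stages: use Roussarie--Thurston to reduce to an honestly fully marked $S$, surger $\mathcal{F}$ at each saddle tangency to produce a new taut foliation $\mathcal{G}$ with a homologous surface $S'$ as a union of leaves, and then check that the plane fields of $\mathcal{F}$ and $\mathcal{G}$ are homotopic.

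First, I would apply the Roussarie--Thurston theorem to isotope $S$ so every component is either a leaf or is transverse to $\mathcal{F}$ except at finitely many saddle tangencies. The Poincar\'e--Hopf formula counts these as $|\chi(S)|$ saddles in total, and Thurston's sign formula recalled in the introduction identifies $\langle e(\mathcal{F}),[S]\rangle$ with their signed count. The algebraic fully marked hypothesis therefore forces every saddle to have the same sign; the two cases being symmetric, I assume $S$ is positive fully marked.

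Second, at each positive saddle $p$ choose a foliation chart $U_p\cong \mathbb{R}^2\times(-1,1)$ in which $\mathcal{F}$ is horizontal and $S\cap U_p$ is a standard saddle with two separatrix arcs. Inside $U_p$ I would split $S$ open along the two separatrices, insert between the resulting sheets a short stack of new leaves spiralling into the sheets on each side, and replace $\mathcal{F}$ on $U_p$ by a plane field integrating these spiral leaves and interpolating to the horizontal one near $\partial U_p$. Outside $\bigsqcup_p U_p$ nothing is changed. The positivity of every saddle is exactly what allows the spirals in the four quadrants to be oriented consistently; because the modification lives in a ball, the resulting closed embedded surface $S'$ is homologous to $S$, and by construction $S'$ is a union of leaves of the new foliation $\mathcal{G}$. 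Tautness is inherited from $\mathcal{F}$ by a small perturbation of a closed transversal through each $U_p$.

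To compare the plane fields, note that the tangent fields of $\mathcal{F}$ and $\mathcal{G}$ agree outside $\bigsqcup_p U_p$ and agree on each $\partial U_p$; on each contractible ball $U_p$ one exhibits an explicit homotopy of oriented plane fields with the prescribed boundary data, and then concatenates the results. The step I expect to be hardest is the surgery itself: realizing its output as a genuine $C^0$ taut foliation, rather than a branched surface or singular lamination, consistently across saddles whose separatrix arcs may meet globally. A likely cleaner route is a sutured manifold decomposition of $M$ along $S$, in which the positive fully marked hypothesis is used to verify tautness of the sutured pieces, followed by Gabai's existence theorem for taut foliations on each piece; a separate verification afterwards is then needed to pin down the homotopy class of the resulting plane field. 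Hyperbolicity of $M$ should enter to rule out atoroidal obstructions to both the sutured decomposition and the plane field homotopy.
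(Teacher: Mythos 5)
There is a genuine gap, and you have essentially located it yourself. Your second stage proposes a \emph{local} modification at each saddle: work inside a chart $U_p$, split $S$ along the two separatrices inside $U_p$, insert spiral leaves, and interpolate back to $\mathcal{F}$ near $\partial U_p$. But the separatrices of $\mathcal{F}|S$ are in general leaves of $\mathcal{F}|S$ that leave every compact chart and wind densely through $S$; the sheets you want to insert do not close up inside $U_p$, and there is no canonical way to interpolate to $\mathcal{F}$ on $\partial U_p$ while remaining a foliation. The modification is irreducibly global. The paper confronts this by cutting $M$ along $S$ and extending $\mathcal{F}_1$ on $M_1=M\setminus\!\setminus S$ so that $\partial M_1$ becomes a union of leaves, using train-track combinatorics, partial $I$-bundle replacements, and spiralling/spinning operations (Section~7), after a substantial preliminary reduction.

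That reduction is the other missing ingredient: even if you could perform a global extension along $S$, the output would generally have Reeb components, so it would not be taut. The obstruction is the presence of \emph{bad solid tori} in $(\mathcal{F},S)$, and removing them is precisely where you must allow $S'$ to differ from $S$ by a nontrivial replacement in its homology class (Section~4); an isotopy-local modification of $S$ inside balls, as in your sketch, cannot do this. Separately there is a holonomy-matching obstruction when closing up annular gaps (the second technical issue cited in the introduction), addressed by concatenation tricks (Lemma~\ref{concatenation}) rather than by pointwise interpolation. Your fallback route --- sutured decomposition along $S$ plus Gabai's existence theorem --- produces \emph{some} taut foliation with $S$ as leaves, but it gives no control over the homotopy class of the plane field, which is the whole content of conclusion~(2); the paper keeps that control by fixing a transverse vector field $\mathcal{L}$ at the outset (Proposition~\ref{vector-field}) and verifying that every operation preserves transversality to $\mathcal{L}$, something Gabai's black box does not supply.
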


\begin{cor}
Let $M$ be a closed hyperbolic 3-manifold, $\mathcal{F}$ be a taut foliation on $M$, and $S$ be an algebraically fully marked surface in $M$. Assume that $S$ is the unique norm-minimizing surface in its homology class, up to isotopy. There exists a taut foliation $\mathcal{G}$ such that $S$ is a union of leaves and the oriented plane fields tangent to $\mathcal{F}$ and $\mathcal{G}$ are homotopic.
\label{fullymarked2}
\end{cor}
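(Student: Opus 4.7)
The plan is to deduce the statement from Theorem \ref{fullymarked} by using the uniqueness hypothesis to isotope the foliation produced by the theorem so that the leaves forming the homologous surface are carried exactly onto $S$.

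First I would verify that $S$ itself is Thurston norm-minimizing in the homology class $[S]$. Since each component of $S$ is incompressible and $M$ is hyperbolic, $S$ has no sphere components, so $|\chi_{-}(S)| = |\chi(S)|$. Combining Thurston's inequality $|\langle e(\mathcal{F}), [S] \rangle| \leq x([S])$ with the definitional bound $x([S]) \leq |\chi_{-}(S)|$ and the algebraically fully marked hypothesis $\langle e(\mathcal{F}), [S] \rangle = \pm \chi(S)$ forces these three quantities to be equal, so $S$ realizes the Thurston norm of $[S]$.

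Next, I would apply Theorem \ref{fullymarked} to produce a taut foliation $\mathcal{G}$, with tangent plane field homotopic to that of $\mathcal{F}$, and a surface $S'$ homologous to $S$ that is a union of leaves of $\mathcal{G}$. Since $\mathcal{F}$ and $\mathcal{G}$ have the same Euler class, $\langle e(\mathcal{G}), [S'] \rangle = \langle e(\mathcal{F}), [S] \rangle = \pm \chi(S)$. Orienting the components of $S'$ consistently with the transverse orientation of $\mathcal{G}$ makes $S'$ a fully marked union of leaves, so the same chain of inequalities as above gives $|\chi_{-}(S')| = x([S'])$; that is, $S'$ is a norm-minimizing representative of $[S']=[S]$.

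By the uniqueness hypothesis, $S'$ is then isotopic to $S$. I would choose an ambient isotopy $\phi_t \co M \to M$ with $\phi_0 = \mathrm{id}$ and $\phi_1(S') = S$, and set $\mathcal{G}' := \phi_1(\mathcal{G})$. Then $\mathcal{G}'$ is a taut foliation having $S$ as a union of leaves, and the family of plane fields $d\phi_t(T\mathcal{G})$ provides a homotopy from $T\mathcal{G}$ to $T\mathcal{G}'$. Composing with the homotopy between $T\mathcal{F}$ and $T\mathcal{G}$ supplied by Theorem \ref{fullymarked} yields the required homotopy from $T\mathcal{F}$ to $T\mathcal{G}'$.

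The only delicate point I anticipate is the verification that $S'$ is norm-minimizing (so that the uniqueness hypothesis applies). A purely component-wise argument via Thurston's theorem for compact leaves would be insufficient in principle, because the homology classes of the individual leaves could cancel and thereby lower the Thurston norm of $[S']$; the argument must instead be carried out at the level of $[S']$ using the Euler class equality inherited from $S$ through the homotopy of plane fields.
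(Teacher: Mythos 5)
Your proof is correct and follows what is evidently the intended (unwritten) deduction of the corollary from Theorem~\ref{fullymarked}: use the uniqueness hypothesis to identify $S'$ with $S$ by an ambient isotopy, then push $\mathcal{G}$ across that isotopy to make $S$ itself a union of leaves. You are also right to flag the cancellation issue: it is exactly the reason the argument must be made at the level of the homology class $[S']$ and the Euler class equality $e(\mathcal G)=e(\mathcal F)$, not component-by-component.

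One phrase deserves care. You write ``Orienting the components of $S'$ consistently with the transverse orientation of $\mathcal{G}$ makes $S'$ a fully marked union of leaves.'' If that re-orientation actually reversed some components, it would change the class $[S']$, and the identity $[S']=[S]$ you rely on would be lost. What saves you is that the construction in the proof of Theorem~\ref{fullymarked} already produces $S'$ positively co-oriented with respect to $\mathcal{G}$: the whole modification is carried out transverse to a fixed vector field $\mathcal{L}$ that is coherently transverse to both $S$ and the foliation at every stage (Proposition~\ref{vector-field} and Proposition~\ref{homotopic}), so the transverse orientation of $\mathcal{G}$ on the leaves forming $S'$ automatically agrees with the orientation $S'$ inherits from $S$. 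With that observation in place, you could alternatively bypass the second chain of inequalities entirely and just invoke Corollary~\ref{incompressible} (fully marked surfaces are norm-minimizing), since a consistently co-oriented union of leaves is a fully marked surface. Either way the conclusion $x([S'])=\chi_-(S')$ holds, the uniqueness hypothesis applies, and the ambient isotopy argument you give finishes the proof.
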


Now assume that $M$ has positive first Betti number and is atoroidal.  Thurston proved that for any taut foliation $\mathcal{F}$ on $M$, the Euler class $e(\mathcal{F})$ has norm at most one and satisfies \emph{the parity condition}. This means that for any $S$, the following inequality holds:
\begin{equation} 
\langle e(\mathcal{F}), [S] \rangle \leq |\chi(S)|, 
\label{inequality}
\end{equation}
and the numbers $\langle e(\mathcal{F}), [S] \rangle$ and $\chi(S)$ have the same parity. The Euler class has norm exactly equal to one if there exists a surface $S$ such that the equality occurs. In particular if $\mathcal{F}$ has some compact leaf, then the norm of the Euler class is equal to one. Thurston conjectured that, conversely, given any integral cohomology class $a \in H^2(M; \mathbb{Z})$ of norm equal to one, there exists a [taut] foliation on $M$ whose Euler class is equal to $a$ \footnote{Presumably Thurston meant to state the conjecture for cohomology with real coefficients rather than integral coefficients, as the general flow of his writing and his subsequent comments about the motivation for the conjecture suggests. See Pages 137--138 in \cite{thurston1986norm}. Thurston did not mention the parity condition in his conjecture. However, it easily follows from the index sum formula, which was known to him, that the parity condition is necessary.}. In \cite{firstpaper} the second author constructed counterexamples to this conjecture assuming Corollary \ref{fullymarked2}. That together with our main result yields our main application.

\begin{thm} There are infinitely many closed hyperbolic 3-manifolds $M$ for which Thurston's Euler class one conjecture does not hold; i.e. there
is an integral point in the unit dual ball, satisfying the necessary parity condition, which is not realized by any taut foliation. \end{thm}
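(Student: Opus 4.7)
The plan is to combine the main result of this paper, Theorem \ref{fullymarked} in its convenient packaging as Corollary \ref{fullymarked2}, with the family of candidate counterexamples already constructed by the second author in \cite{firstpaper}. Specifically, the companion paper produces an infinite family of closed hyperbolic $3$-manifolds $M$, each equipped with an integral class $a \in H^2(M;\mathbb{Z})$ that satisfies the parity condition, has Thurston norm one, is realized by a unique (up to isotopy) norm-minimizing surface $S$ in its homology class, and for which the following obstruction is established: no taut foliation on $M$ can have $S$ as a union of leaves. Granting these inputs, the present theorem is a short contradiction.

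Concretely, I would fix such an $M$ and $a$ and argue by contradiction. Suppose a taut foliation $\mathcal{F}$ on $M$ has $e(\mathcal{F}) = a$. Then evaluating on the designated norm-minimizing surface $S$ gives $\langle e(\mathcal{F}), [S] \rangle = \pm \chi(S)$, so $S$ is algebraically fully marked for $\mathcal{F}$ in the sense defined in the introduction. Since $S$ is the unique norm-minimizing representative of its homology class, the hypotheses of Corollary \ref{fullymarked2} are met, producing a taut foliation $\mathcal{G}$ on $M$, with plane field homotopic to that of $\mathcal{F}$, for which $S$ is a union of leaves. This directly contradicts the obstruction from \cite{firstpaper}, and hence no such $\mathcal{F}$ exists. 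Running over the infinite family of manifolds supplied by \cite{firstpaper} yields infinitely many counterexamples to Thurston's Euler class one conjecture.

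The serious work is therefore not in this deduction but in its two ingredients: the main theorem of the present paper, and the construction in \cite{firstpaper}. What Corollary \ref{fullymarked2} provides, and what makes the bridge between the two possible, is a strengthening of the trivial fact that a taut foliation with a compact leaf representing $[S]$ cannot exist on these $M$: it rules out, up to homotopy of the plane field, \emph{any} taut foliation pairing with $[S]$ maximally. Without this upgrade one would face the a priori possibility of a taut foliation with Euler class $a$ and only saddle tangencies along every representative of $[S]$, and it is precisely that possibility that the fully marked surface theorem excludes via the uniqueness of $S$ in its homology class. I expect the main conceptual obstacle of this theorem to have been absorbed into Theorem \ref{fullymarked} itself; the argument at this level is essentially a packaging step.
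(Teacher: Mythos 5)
Your proposal is correct and matches the paper's approach: the paper simply observes that \cite{firstpaper} constructed the counterexamples conditionally on Corollary \ref{fullymarked2}, which is now supplied by Theorem \ref{fullymarked}, and your reconstruction of that conditional argument (algebraically fully marked implies, via uniqueness and Corollary \ref{fullymarked2}, a taut foliation with $S$ as a union of leaves, contradicting the obstruction in \cite{firstpaper}) is exactly the intended deduction.
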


This result resolves the last of three fundamental conjectures offered by Thurston in \cite{thurston1986norm}.  In 1985, positive solutions to the first two were given by the first author in \cite{gabai1983}.  He also proved a partial positive result for Thurston's third conjecture.  

\begin{thm}[Gabai] Let $M$ be a compact oriented irreducible 3-manifold, possibly with toral boundary, and let $a \in H^2(M, \partial M ; \mathbb{R})$ be a vertex of the dual unit ball. Then there is a taut foliation on $M$ whose Euler class is equal to $a$.
\label{thm:Gabai}
\end{thm}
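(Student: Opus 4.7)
The plan is to realize the vertex $a$ as the Euler class of a taut foliation by producing a properly embedded norm-minimizing surface whose homology class lies in the interior of the cone dual to $a$, and then building a taut foliation having that surface as a union of leaves via sutured manifold theory. Let $B \subset H_2(M, \partial M; \mathbb{R})$ denote the unit ball of the Thurston norm, so that $B^{*} \subset H^{2}(M, \partial M; \mathbb{R})$ is its polar dual. Since $a$ is a vertex of $B^{*}$, there is a unique top-dimensional face $F$ of $B$ characterized by the condition that $\langle a, x\rangle = 1$ exactly when $x \in F$.

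First I would choose an integral class $\alpha \in H_2(M, \partial M; \mathbb{Z})$ lying in the open cone over the relative interior of $F$; such classes exist because $F$ is top-dimensional and integral rays are dense in its cone. I would then represent $\alpha$ by a properly embedded, oriented, norm-minimizing surface $S$, with $\partial S$ a union of simple closed curves on the toral boundary. By construction $\chi_{-}(S) = \|\alpha\| = \langle a, \alpha\rangle$.

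The central step is to invoke the sutured manifold hierarchy: decomposing $M$ along $S$ yields a taut sutured manifold $(M', \gamma')$, and iterating with norm-reducing taut decompositions produces a finite taut hierarchy terminating in a disjoint union of product sutured manifolds. Reversing the hierarchy and gluing in the product foliations stage by stage yields a transversely oriented taut foliation $\mathcal{G}$ of $M$ which is tangent to $\partial M$ and in which $S$ is a union of leaves. The main obstacle is arranging the reassembly so that tautness and transverse orientability survive each gluing and so that no Reeb components are introduced; this is the technical core of \cite{gabai1983}, where the combinatorics of the hierarchy must be carefully controlled.

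Finally I would identify $e(\mathcal{G})$ with $a$. Since $S$ is a union of leaves carrying the coherent transverse orientation of $\mathcal{G}$, Thurston's index-sum formula gives $\langle e(\mathcal{G}), \alpha\rangle = \pm \chi(S) = \pm \|\alpha\|$, and after possibly reversing the transverse orientation of $\mathcal{G}$ the sign matches $\langle a, \alpha\rangle = \|\alpha\|$. Thurston's inequality places $e(\mathcal{G})$ in $B^{*}$, so both $e(\mathcal{G})$ and $a$ lie in $B^{*}$ and both attain the value $\|\alpha\|$ on $\alpha$. But $\alpha/\|\alpha\|$ lies in the relative interior of the top-dimensional face $F$, so the face of $B^{*}$ dual to $\alpha/\|\alpha\|$ consists of the single vertex $a$. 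Hence $e(\mathcal{G}) = a$, completing the proof.
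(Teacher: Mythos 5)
Your argument is correct and follows essentially the same route as the paper: produce a norm-minimizing surface $S$ representing an integral class in the interior of the cone over the facet dual to $a$, apply Gabai's sutured-manifold theorem from \cite{gabai1983} to build a taut foliation $\mathcal{G}$ with $S$ a union of leaves, then pin down $e(\mathcal{G})=a$ using Thurston's inequality together with the convex geometry of the dual unit ball. The one cosmetic difference is your endgame: you invoke the polytope-duality fact that the dual face to a relative-interior point of a facet is a single vertex, whereas the paper derives the same conclusion by hand, writing $\bar a=\sum t_i\bar a_i$ for a basis $\bar a_i$ of vertices of the facet and applying the triangle inequality to force $\langle e(\mathcal{G}),\bar a_i\rangle = x(\bar a_i) = \langle a,\bar a_i\rangle$ for all $i$; the two are logically equivalent. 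One small inaccuracy: when $\partial M\neq\emptyset$ and $S$ has boundary, the foliation produced by the hierarchy should be taken \emph{transverse} to $\partial M$ with $\mathcal{G}|\partial M$ free of Reeb components (as in the paper), not tangent to $\partial M$; a leaf $S$ with $\partial S\subset\partial M$ is incompatible with $\partial M$ consisting of torus leaves. Both boundary conditions fall within the hypotheses of Thurston's norm-duality theorem, so this does not affect the remainder of the argument, but it should be stated correctly.
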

We expect Theorem \ref{fullymarked} to fail in general without allowing to change $S$ within its homology class.

\begin{conj} There exists a closed hyperbolic 3-manifold $M$ supporting a taut foliation $\mathcal{F}$ with a fully marked surface $S$, such that there exists no taut foliation $\mathcal{G}$ on $M$ with oriented plane field homotopic to $\mathcal{F}$ such that $S$ is a union of leaves of $\mathcal{G}$.
\label{generalcase}
\end{conj}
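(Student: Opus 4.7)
The plan is to construct an explicit counterexample: a closed hyperbolic $M$, a taut foliation $\mathcal{F}$, and an algebraically fully marked surface $S$ for $\mathcal{F}$ such that no taut foliation $\mathcal{G}$ homotopic to $\mathcal{F}$ as oriented plane fields can have $S$ itself as a union of leaves. By Theorem \ref{fullymarked} the homology class $[S]$ is always realized by leaves of some such $\mathcal{G}$, so the obstruction must distinguish $S$ from the other norm-minimizing representatives in its class; in particular, the example must live in a homology class that admits several non-isotopic norm-minimizing representatives (so a fibered class will not do).

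First I would look for $M$ with $b_2(M)\geq 2$ and a top-dimensional face $F$ of the Thurston norm ball on which the norm is realized by several non-isotopic embedded surfaces. Gabai's Theorem \ref{thm:Gabai} then supplies a taut foliation $\mathcal{F}$ whose Euler class is dual to $F$, and after a Roussarie--Thurston general position argument any class in the cone over $F$ is algebraically fully marked for $\mathcal{F}$. This produces a large pool of candidate pairs $(\mathcal{F},S)$ to test.

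Second I would search for an obstruction distinguishing the chosen $S$ from other representatives in $[S]$. Natural candidates are: (a) the combinatorial arrangement of the components of $S$, since $S$ being a union of leaves of $\mathcal{G}$ forces a sutured manifold decomposition of $M\setminus S$ with strong compatibility constraints across all components; (b) the relative position of the tangencies of $\mathcal{F}$ with $S$ with respect to a branched surface carrying $\mathcal{F}$; (c) the $\mathbb{Z}$-valued secondary invariant (the $d_3$/Gompf invariant) refining the Euler class and classifying the oriented plane field up to homotopy, which must be compatible with the gluing of the components of $M\setminus S$ when $S$ is leaf-realized. Because Theorem \ref{fullymarked} is free to replace $S$, these secondary data are exactly what could rule out a specific $S$ while permitting a homologous $S'$.

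The main obstacle will be converting one of (a)--(c) into a verifiable obstruction on a concrete manifold. My best guess for a tractable attack is (a): take $S$ to be a norm-minimizing representative of a multiple $k\alpha$ of a class, built as a disjoint union of many non-parallel norm-minimizing surfaces via Haken sum, so that $S$ has many components. Then try to show that every branched surface carrying a taut foliation in the homotopy class of $\mathcal{F}$ admits strictly fewer than that many disjoint closed leaves in this class, giving a counting obstruction. Making such a bound rigorous -- essentially a rigidity theorem for the set of closed leaves in a fixed plane field homotopy class -- would likely combine Gabai's sutured manifold theory with a careful analysis of the $d_3$ invariant and appears to be the hardest step; absent this, one could instead try to detect the obstruction via the universal circle of $\mathcal{F}$, using that leaves of $\mathcal{G}$ lift to leaves of the pulled-back foliation on $\widetilde M$ with constrained limit sets, and show that $S$ is incompatible with any such constraint.
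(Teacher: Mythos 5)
The statement you are asked to prove is Conjecture~\ref{generalcase}. The paper does not prove it: it is explicitly offered as an open conjecture (``We expect Theorem~\ref{fullymarked} to fail in general without allowing to change $S$ within its homology class''), so there is no proof in the paper to compare against. Your write-up is not a proof either, and you are appropriately candid about this --- it is a research plan in which the decisive step (``a rigidity theorem for the set of closed leaves in a fixed plane field homotopy class'' or, alternatively, a universal-circle obstruction) is named but not supplied. So the verdict here is simply that the conjecture remains unresolved by your proposal, just as it is in the paper.

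A few remarks on the plan itself. Your item~(c), the $d_3$/Gompf invariant of the plane field of $\mathcal G$, cannot by itself distinguish anything here: by hypothesis the plane fields of $\mathcal F$ and $\mathcal G$ are homotopic, so the Euler class and $d_3$ invariant of $\mathcal G$ are forced; only some relative or decomposed version along $M\setminus\setminus S$ could conceivably carry information, and that is not developed. Your item~(a) --- bounding the number of pairwise non-isotopic closed leaves attainable within a fixed homotopy class of plane fields --- is in tension with the very flexibility the paper exploits: $I$-bundle replacement (Denjoy blow-up) preserves the plane field homotopy class and can turn a single compact leaf into an arbitrarily large packet of compact leaves, so any counting argument must count isotopy classes, not leaves, and even then Proposition~\ref{bad annulus} shows that the combinatorics of $(\mathcal F,S)$ can be altered substantially by moves within the homotopy class. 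In short, the mechanisms the paper uses to \emph{prove} Theorem~\ref{fullymarked} are exactly the obstacles to the kind of rigidity statement you would need for Conjecture~\ref{generalcase}, and finding an invariant that survives $I$-bundle replacement and homologous surface replacement yet pins down $S$ up to isotopy is the genuine open problem.
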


\vskip 8pt
Here we give an informal sketch of the proof of Theorem \ref{fullymarked}. By Roussarie--Thurston general position, the surface $S$ can be isotoped such that each of its components becomes either a leaf or such that the induced singular foliation on the component has only saddle singularities. We do not touch any component of $S$ that is already a leaf. Take the union of components of $S$ for which the second scenario happens, and by abuse of notation call it $S$. 

Note that, there might be two-dimensional Reeb components on $S$. Since $S$ is fully marked, all saddle singularities on $S$ have the same sign, that is the oriented normal vectors to the surface and to the foliation always agree or always disagree. Without loss of generality, we may assume the orientations always agree. We fix a line field that is transverse to both $\mathcal{F}$ and $S$.

Cut $M$ along $S$ to get the manifold $M \setminus \setminus S$. The boundary of $M \setminus \setminus S$ consists of two copies of $S$.  We want to modify the foliation along $S$ by adding leaves to the boundary of $M \setminus \setminus S$ to obtain a foliation on $M \setminus \setminus S$ that is tangential to the boundary. Then the desired foliation $\mathcal{G}$ can be obtained by gluing two copies of $S$ in the boundary of $M \setminus \setminus S$.  Starting with a line field defined in a neighborhood $U$ of $S$ and transverse to $\mathcal{F}$, the modification of $\mathcal{F}$ to $\mathcal{G}$ is supported in $U$ and the leaves of $\mathcal{G}$ continue to be transverse to this line field. It follows that the plane field of $\mathcal{G}$ is homotopic to that of $\mathcal{F}$.  

There are two main technical issues to carry out the proof as stated. First, $\mathcal{G}$ might have Reeb components. The issue comes from certain \emph{bad solid tori} inside the induced foliation on $M \setminus \setminus S$. We show that one can avoid this unpleasant situation by replacing $S$ with a new surface $S'$ with $[S]=[S']$. This is done by defining a set of moves for changing the surface $S$ while preserving its homology class. Moreover, a complexity function is defined, strictly decreasing under these moves that terminates after finitely many moves. At this point no bad solid tori remain.   Second, the extension might require filling in an $A\times I$ where $A$ is an annulus, $A\times 0\subset L$, $L$ a leaf of $\mathcal{G'}$, $A\times 1\subset S$, and $\partial A\times I$ is transverse to $\mathcal{G'}$.  Here $\mathcal{G'}$ is the partially extended foliation.  The problem is that the holonomy on the two sides may not match and hence there is no way to fill in.  Such a problem was encountered in \cite{MR1162560}.  Both these two technical issues require a more global modification of the foliation.  Nevertheless, by using a transverse line field to $\mathcal{F}$, we can modify to $\mathcal{G}$ without changing the homotopy class.

\subsection{Outline}
The paper is organized as follows.  In Section 2, we review the background material. In Section 3, Thurston's theorem on compact leaves of taut foliations is stated.  We note that its proof shows that fully marked surfaces are norm-minimizing and each non torus or annulus component is incompressible. We then state the first author's converse to Thurston's theorem, and show how he used it to give a partial positive solution to Thurston's Euler class one conjecture. In Section 4 \emph{bad solid tori} are introduced and it is shown that at the cost of repeatedly replacing $S$ by a surface in its homology class, and modifying the foliation preserving the homotopy class of its plane field, all the bad solid tori can be eliminated. In Section 5, it has been shown that in the absence of bad solid tori, one can find a \emph{complete system of coherent transversals}. The complete system of coherent transversals is used in Section 7 to ensure that the tautness property is preserved. In Section 6, combinatorial lemmas on train tracks and surfaces are presented to be used in Section 7. In Section 7, various constructions for modifying foliations are presented.  These modifications are shown to preserve the homotopy class of the plane field of the original foliation. Section 7 ends with the proof of Theorem \ref{fullymarked}. In Section 8 we offer a further conjecture.

\section{Background}

\subsection{Taut foliations} 
By a \emph{foliation} of a 3-manifold $M$, we mean a decomposition of $M$ into injectively immersed surfaces that locally looks like the product foliation $\mathbb{R}^2 \times \mathbb{R}$. A \emph{leaf} of the foliation is a connected component of the surfaces in the foliation. Throughout this paper we assume that $M$ is orientable and all foliations are \emph{transversely orientable}, meaning that there is a consistent choice of transverse orientation for the leaves. 

A foliation $\mathcal{F}$ on the compact manifold $M$, transverse to the possibly empty $\partial M$ is called \emph{taut} if every leaf has a \emph{closed transversal}. A closed transversal is a closed loop transverse to the foliation. For taut foliations transverse to $\partial M$, a single transversal suffices (See Page 155 of \cite{calegari2007foliations}). 

\subsection{Regularity of foliations}
A foliation $\mathcal{F}$ is called $C^0$, or \emph{topological}, if the holonomy of its leaves is continuous. It is called $C^{\infty,0}$ if the leaves are smoothly immersed with continuous holonomy. By Calegari, every topological foliation of a 3-manifold is topologically isotopic to a $C^{\infty,0}$ foliation \cite{calegari2001leafwise}. 

\subsection{Suspension foliations}
The exposition here is taken from Chapter V of \cite{camacho2013geometric}. Let $p \colon E \longrightarrow B$ be a fiber bundle with base $B$,  fiber $F$, and total space $E$. We say a foliation $\mathcal{F}$ of $E$ is \emph{transverse to the fibers} if 
\begin{enumerate}
\item Each leaf $L$ of $\mathcal{F}$ is transverse to the fibers and $\dim(L) + \dim(F)= \dim E $.
\item For each leaf $L$ of $\mathcal{F}$, the restriction map $p \colon L \longrightarrow B$ is a covering map.
\end{enumerate}
When the fiber $F$ is compact, Condition (2) is implied by Condition (1), as shown by Ehresmann. 

Given a fiber bundle and a foliation transverse to the fibers, one can construct a representation 
\[ \phi \colon \pi_1(B, b_0) \longrightarrow \text{Homeo}(F), \hspace{3mm} b_0 \in B, \]
that is the holonomy around the based loops lying in $B$. Conversely:

\begin{thm}
Let $B$ and $F$ be connected manifolds. Given a representation 
\[ \phi \colon \pi_1(B, b_0) \longrightarrow \text{Homeo}(F), \hspace{3mm} b_0 \in B, \]
one can construct a fiber bundle $E(\phi)$ over the base $B$ and with fiber $F$, and a foliation $\mathcal{F}(\phi)$ transverse to the fibers of $E(\phi)$ such that the holonomy of $\mathcal{F}(\phi)$ is equal to $\phi$.
\end{thm}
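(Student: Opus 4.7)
The plan is to perform the standard \emph{suspension} construction. Let $\tilde{B} \longrightarrow B$ be the universal cover, with $\pi_1(B, b_0)$ acting on $\tilde{B}$ on the right by deck transformations. Using $\phi$, also let $\pi_1(B, b_0)$ act on $F$ on the right by $f \cdot \gamma := \phi(\gamma)^{-1}(f)$. Form the diagonal free, properly discontinuous action on $\tilde{B} \times F$, and define
\[ E(\phi) := (\tilde{B} \times F)/\pi_1(B, b_0). \]
The first-factor projection $\tilde{B} \times F \longrightarrow \tilde{B}$ is equivariant and hence descends to a map $p \co E(\phi) \longrightarrow B$. I would check local triviality by working over an evenly covered open set $U \subset B$: preimages in $\tilde B$ are a disjoint union of sheets, each mapped homeomorphically to $U$, and this produces an explicit trivialization $p^{-1}(U) \cong U \times F$.

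Next I would build the foliation. The product foliation on $\tilde{B} \times F$ whose leaves are the horizontal slices $\tilde{B} \times \{f\}$ is preserved setwise by the diagonal action (which permutes slices according to $\phi$), so it descends to a foliation $\mathcal{F}(\phi)$ on $E(\phi)$. Its leaves are transverse to the fibers of $p$ because this transversality holds in the cover and is a local condition, and the dimensions add correctly since $\dim(\tilde B \times \{f\}) = \dim B$. Restricted to any leaf $L$ covered by a slice $\tilde{B} \times \{f\}$, the map $p|_L$ factors through the universal cover, so it is a covering; this verifies the two defining conditions for being transverse to the fibers.

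Finally I would compute the holonomy and verify it equals $\phi$. Fix the basepoint fiber $p^{-1}(b_0)$ and identify it with $F$ via a chosen lift $\tilde{b}_0 \in \tilde{B}$. Given a loop $\gamma$ in $B$ based at $b_0$, lift it to a path $\tilde\gamma$ in $\tilde B$ starting at $\tilde b_0$; its endpoint is $\tilde b_0 \cdot [\gamma]$. A leaf through $(\tilde b_0, f)$ contains the path $\tilde \gamma \times \{f\}$, and in the quotient this path starts at the point represented by $(\tilde b_0, f)$ and ends at the point represented by $(\tilde b_0 \cdot [\gamma], f)$, which under the diagonal action is the same as $(\tilde b_0, \phi([\gamma])(f))$. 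Thus the holonomy along $\gamma$ carries $f$ to $\phi([\gamma])(f)$, as required. The only step that needs care is the sign convention in step one so that left/right actions and composition of loops produce a homomorphism rather than an anti-homomorphism; this is the only mildly subtle point, and everything else is an unpacking of the quotient construction.
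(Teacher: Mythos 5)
Your proof is correct and takes essentially the same approach as the paper, which merely describes the quotient construction on $\tilde B \times F$ without detailed verification; you fill in the routine checks (local triviality, transversality, the covering property, and the holonomy computation) and correctly flag the left-versus-right action convention as the only point needing care. The only cosmetic difference is that the paper writes the diagonal action as a left action while you use the equivalent right-action formulation.
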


We are mainly interested in the case that $F = [0,1]$ or $S^1$ is one-dimensional, and the image of $\phi$ lies in $\text{Homeo}^+(F)$, that is the group of orientation-preserving homeomorphisms of $F$. The construction is as follows: 

Denote by $\tilde{B}$ the universal cover of $B$. Consider the action of $\pi_1(B,b_0)$ on $\tilde{B} \times F$ defined as 
\[ \gamma \in \pi_1(B , b_0), \hspace{3mm} (\tilde{b} , f ) \in \tilde{B} \times F; \hspace{3mm} \gamma \cdot (\tilde{b} , f) := (\gamma \cdot \tilde{b} \hspace{1mm} , \hspace{1mm} \phi(\gamma). f),\]
where the action on the first factor is by covering transformations. This action preserves the product foliation on $\tilde{B} \times F$, meaning that it sends leaves to (possibly different) leaves. Hence there is an induced foliation on the quotient 
\[ E(\phi) : = (\tilde{B} \times F) \setminus \pi_1(B,b_0), \]
that satisfies the desired properties.

\subsection{Corners}
Consider a codimension-one foliation of a 3-manifold $M$ with non-empty boundary. Let $p \in \partial M$ be a point. We say that $p$ is a \emph{tangential point} if there is a foliated neighborhood of $p$ in $M$ that is homeomorphic to a foliated neighborhood of $(0,0,0)$ in 
\[ \{ (x,y,z) \hspace{1mm} | \hspace{1mm} x, y \in \mathbb{R}, \hspace{2mm}z \geq 0 \},  \]
where the leaves consist of the planes $z = $ constant. By definition, $p$ is a \emph{transverse point} if there is a foliated neighborhood of $p$ in $M$ that is homeomorphic to a foliated neighborhood of $(0,0,0)$ in the foliation of 
\[ \{(x,y,z) \hspace{1mm} | \hspace{1mm} y,z \in \mathbb{R}, \hspace{2mm} x \geq 0\}, \]
where the leaves are the half-planes $z=$ constant.

We say $p$ is a \emph{convex corner} if there is a foliated neighborhood of $p$ that is homeomorphic to a foliated neighborhood of $(0,0,0)$ in the foliation of 
\[ \{(x,y,z) \hspace{1mm} | \hspace{1mm} x \geq 0 , \hspace{2mm}y \in \mathbb{R}, \hspace{2mm} z \geq 0\}, \]
where the leaves consist of the half-planes $z = $ constant. 

A point $p$ is a \emph{concave corner}, if there is a foliated neighborhood of $p$ that is homeomorphic to a foliated neighborhood of  $(0,0,0)$ in the foliation of 
\[ \{  (x,y,z) \hspace{1mm} | \hspace{1mm} y \in \mathbb{R}, \text{ and} \hspace{2mm} \big( x \geq 0  \text{ or } z \geq 0 \big) \}, \]
where the leaves consist of the planes and half-planes $z = $constant.

A point $p$ is a \emph{node} if there is a foliated neighborhood of $p$ that is homeomorphic to a foliated neighborhood of  $(0,0,0)$ in the foliation of 
\[  \{(x,y,z) \hspace{1mm} | \hspace{1mm} x \geq 0 , \hspace{1mm}y \in \mathbb{R}, \hspace{1mm} z \geq 0\} \cup  \{  (x,y,z) \hspace{1mm} | \hspace{1mm} z<0, \text{ and} \hspace{2mm} \big( x \geq 0  \text{ or } y \geq 0 \big)\},  \]
by the half-planes and three-quarter-planes $z$ = constant. See Figure \ref{corners}, where the dashed lines indicate that the leaves are cut open in the figure in order to make them more visible. 

\begin{figure}
\labellist
\pinlabel $1$ at 85 65 
\pinlabel $3$ at 85 35
\pinlabel $4$ at 53 38
\pinlabel $5$ at 25 30
\pinlabel $2$ at 52 8
\endlabellist
\centering
\includegraphics[width= 2 in]{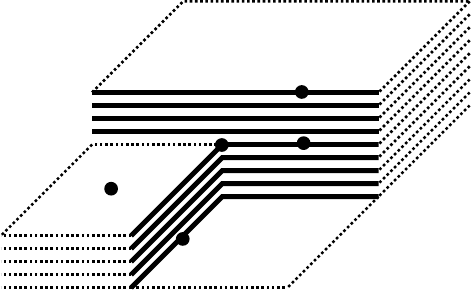}
\caption{The points labelled with $1, 2, 3, 4$ and $5$ respectively are convex corner, concave corner, transverse point, node, and tangential point.}
\label{corners}
\end{figure}

Define the \emph{tangential boundary} of $M$, $\partial_{\tau}M$, as the closure of the union of tangential points in $\partial M$. Let the \emph{transverse boundary} of $M$, $\partial_\pitchfork M$, be the closure of the complement of the tangential boundary, $\partial_{\pitchfork} M : = \overline{\partial M - \partial_{\tau}M}$. In particular, convex corners, concave corners, and nodes are included in both $\partial_{\pitchfork}M$ and $\partial_\tau M$. 

\subsection{$I$-bundle replacement} Let $L$ be a leaf of the transversely orientable codimension-one foliation $\mathcal{F}$ of the compact 3-manifold $M$. Informally speaking, we want to blow air into the foliation along $L$ and fill the gap with a packet of leaves.  This well-known and frequently used operation, also known as \emph{Denjoy blow-up}, goes back to 1932 \cite{denjoy1932courbes} when Denjoy showed how to replace a leaf by a product bundle of leaves.  Here we replace $L$ by any foliated product $I$-bundle over $L$.  As an example, two dimensions lower, we have the Denjoy blow-up of the dyadic rationals in the interval.  This replaces each dyadic rational in $(0,1)$ by a closed interval.  The reverse operation starts with the standard middle thirds Cantor set in the interval.  Pass back to the interval by identifying the closure of each complementary arc to a point.   

In our setting we start with a transverse line field $\mathcal{V}$ to $\mathcal{F}$.  We now describe the most interesting case, which is when $L$ is dense in $M$.  Blowing up $L$  produces a laminated space $X\subset M$ which is transversely a Cantor set.  Here $\mathcal{V}$ induces the product structure $L\times I$ on $M\setminus X$ completed with the induced path metric.  We recover $M$ by identifying each connected interval of $L\times I$ to a point.  $I$-\emph{bundle  replacement} is the operation of passing from $\mathcal{F} $ to $\mathcal{G}$ by filling in $L\times I$ with a foliated bundle transverse to the $I$-factor.  For more details, see for example, \cite[Example 4.14]{calegari2007foliations}.  If the starting foliation is taut, the new constructed foliation remains taut.  Indeed, the same single curve, transverse to all the leaves of $\mathcal{F}$ is a transversal for all the leaves of $\mathcal{G}$.  

We will need a slight variation of the $I$-bundle replacement, which we call a partial $I$-bundle replacement. Assume that $M$ satisfies the following boundary condition: Let $\partial_{\tau}M$ and $\partial_{\pitchfork}M$ be the tangential and transverse boundary of $M$ respectively. The intersection of $\partial_\tau M$ and $\partial_{\pitchfork} M$ is a finite union of disjointly embedded $1$-complexes and simple closed curves in $\partial M$, whose vertices (respectively edges and simple closed curves) correspond to nodes (respectively convex and concave corners) on $\partial M$. 

Let $T$ be a component of $\partial_{\tau}M$, and $L$ be the leaf of $\mathcal{F}$ containing $T$. Define the preferred side of $L$ as the side facing $\partial M$. Let $R$ be a compact subsurface of $L \cap (\partial_\tau M)$. Define a \textbf{partial $I$-bundle replacement} along $L-\text{int}(R)$ as the result of first doing an $I$-bundle replacement along $L$ on the preferred side where $I$ is identified with $[0,1]$, and then removing the restriction of the $(0,1]$-bundle over $\text{int}(R)$. If $L_0$ is a connected component of $L - \text{int}(R)$, we define the partial $I$-bundle replacement along $L_0$ by first doing a partial $I$-bundle replacement along $L - \text{int}(R)$ and then collapsing the $I$ fibers above all other components of $L - \text{int}(R)$. See Figure \ref{partial-I-bundle-replacement} for a schematic picture of partial $I$-bundle replacement in one dimension lower.

\begin{figure}
	\labellist
	\pinlabel $R$ at 102 83
	\pinlabel $R$ at 235 84
	\pinlabel $L_0$ at 75 103
	\pinlabel $L_0$ at 210 112
	\pinlabel $\partial_\tau M$ at 115 40
	\pinlabel $\partial_\tau M$ at 248 40
	\endlabellist
	
	\centering
	\includegraphics[width= 3 in]{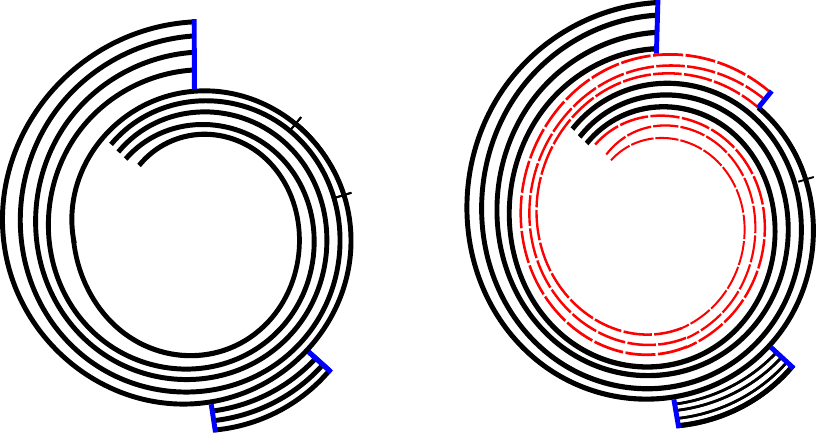}
	\caption{A schematic picture of partial $I$-bundle replacement in one dimension lower: the left (respectively) right hand side show the picture before (respectively after) the partial $I$-bundle replacement. The vertical segments (in blue) indicate the transverse part of the boundary. The dashed lines (in red) indicate the newly added leaves, i.e. the $I$-bundle over $L_0$. Note that a new component of $\partial_\pitchfork M$, next to $(\partial R) \cap L_0$, is created.}
	\label{partial-I-bundle-replacement}
\end{figure}

The following will be used for establishing tautness of newly constructed foliations.

\begin{obs}
Let $M$ be a compact 3-manifold, and $\mathcal{F}$ be a codimension-one foliation on $M$. Let $\mathcal{F}'$ be obtained from $\mathcal{F}$ by an $I$-bundle replacement. Then any transversal (respectively transverse vector field) for $\mathcal{F}$ is naturally a transversal (respectively transverse vector field) for $\mathcal{F}'$. Moreover, if $\gamma$ is a transversal for $L$, a leaf of $\mathcal{F}$ that is blown up, then $\gamma$ is a transversal for the blown up leaves. In particular, if $\mathcal{F}$ is taut, then so is $\mathcal{F}'$.
\label{I-bundle-replacement}
\end{obs}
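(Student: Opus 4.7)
The plan is to exploit the single structural fact behind the $I$-bundle replacement construction: the inserted packet is a foliated $I$-bundle over the blown-up leaf $L$ whose $I$-fibers are by construction tangent to the chosen transverse line field $\mathcal{V}$, and whose new leaves are transverse to those $I$-fibers. Since $I$-bundle replacement does not change the underlying manifold $M$, the word ``naturally'' in the statement should mean that from a transversal $\gamma$ for $\mathcal{F}$ we produce a nearby transversal $\gamma'$ for $\mathcal{F}'$ which agrees with $\gamma$ outside a neighborhood of $L$.

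First I would extend a given transversal. At each point $p$ of the (transverse, hence discrete) intersection $\gamma \cap L$, splice into $\gamma$ a short arc running along the $\mathcal{V}$-fiber over $p$ inside the inserted $L \times I$, concatenated with the incoming and outgoing ends of $\gamma$. The resulting curve $\gamma'$ agrees with $\gamma$ outside the inserted region, so it is transverse to every un-blown-up leaf of $\mathcal{F}'$; inside the inserted region $\gamma'$ is tangent to $\mathcal{V}$, while every new leaf is by definition transverse to $\mathcal{V}$, so $\gamma'$ is transverse to the new leaves as well. If $\gamma$ was closed then so is $\gamma'$. The transverse vector field statement is obtained by the same recipe: inside the inserted region replace the vector field by any nowhere-zero field tangent to $\mathcal{V}$, smoothly interpolated with the original field on the boundary. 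For the claim about the blown-up leaves, observe that the spliced $\mathcal{V}$-arc at $p$ traverses the full fiber $\{p\}\times I$, and since every new leaf meets each $I$-fiber (it is a covering graph over $L$), $\gamma'$ crosses every new leaf.

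For the tautness statement, let $L'$ be a leaf of $\mathcal{F}'$. If $L'$ comes from an un-blown-up leaf of $\mathcal{F}$, its original closed transversal $\gamma$ gives a closed transversal to $L'$ after the extension above, since the splicing happens only at crossings with blown-up leaves and does not remove the original crossing of $\gamma$ with $L'$. If instead $L'$ is a new leaf inside the inserted $I$-bundle, take a closed transversal $\gamma$ for $L$ in $\mathcal{F}$, available by tautness, extend it to $\gamma'$ as above, and apply the previous paragraph to see that $\gamma'$ meets $L'$. The main obstacle is essentially cosmetic: one must ensure the splicing is unambiguous when $\gamma$ crosses several distinct blown-up leaves, and, if one wants smooth rather than merely $C^0$ transversals, smoothly interpolate the $\mathcal{V}$-arc with the incoming and outgoing pieces of $\gamma$. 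Both are routine using bump functions supported near each crossing point, and neither affects the transversality argument.
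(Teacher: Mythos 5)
Your proof is correct, and it is a careful elaboration of exactly the idea the paper leans on: because the inserted $I$-bundle is defined via the transverse line field $\mathcal{V}$ with the new leaves transverse to the $I$-factor, any transversal (or transverse vector field) can be arranged, by the splice/interpolation near $L$ that you describe, to be tangent to $\mathcal{V}$ there and hence transverse to the replacement foliation, while the covering-space fact guarantees the $\mathcal{V}$-fiber hits every new leaf. The paper states the observation without proof after noting in the preceding paragraph that "the same single curve" remains a transversal; your argument makes precise the implicit pullback along the collapsing map that justifies that remark.
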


\subsection{Homeomorphisms of the interval}
In this section, we gather some of the results that will be needed about orientation-preserving homeomorphisms of the interval. 

\begin{lem}
If $F$ is any surface with boundary which is not compact planar and $b$ is a boundary component of $F$, then there are foliations of $F \times I$ ($I$ is a closed interval), transverse to the $I$ factor that have a given holonomy on $b$ and trivial holonomy on all other boundary components. In the remaining case that $F$ is compact planar (not a disk), if $b$ and $b'$ are two boundary components with the induced orientations from $F$, then there exists a foliation transverse to the $I$ factor that has a given holonomy $\mu$ on $b$ and $\mu ^{-1}$ on $b'$ and trivial holonomy on all other boundary components \cite{MR1162560}. 
\label{transversefoliations}
\end{lem}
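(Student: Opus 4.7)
The plan is to realize all of the foliations in the statement as suspension foliations. Given a homomorphism $\phi : \pi_1(F) \to \mathrm{Homeo}^+(I)$, the construction recalled in the subsection on suspension foliations yields a foliation of $F \times I$ transverse to the $I$-fibers whose holonomy around any loop $\gamma$ in $F$ equals $\phi(\gamma) \in \mathrm{Homeo}^+(I)$. At each boundary component $c$ of $F$, the resulting foliation restricts on the annulus $c \times I$ to the suspension of the holonomy around $c$, so the boundary holonomies of the foliation are precisely the images of the boundary conjugacy classes under $\phi$. The lemma therefore reduces to constructing representations $\phi$ with prescribed values on those conjugacy classes.

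Assume first that $F$ is not compact planar. In the compact case, $F$ has genus $g \geq 1$ and boundary circles $b = c_1, c_2, \ldots, c_n$, and I would use the standard presentation
\[
\pi_1(F) = \left\langle \alpha_1, \beta_1, \ldots, \alpha_g, \beta_g,\, c_1, \ldots, c_n \;\middle|\; \prod_{i=1}^{g} [\alpha_i, \beta_i] \cdot \prod_{j=1}^{n} c_j = 1 \right\rangle.
\]
Set $\phi(c_1) = \mu$ (the prescribed holonomy on $b$), $\phi(c_j) = \mathrm{id}$ for $j \geq 2$, and choose $\phi(\alpha_i), \phi(\beta_i) \in \mathrm{Homeo}^+(I)$ so that $\prod_{i=1}^{g} [\phi(\alpha_i), \phi(\beta_i)] = \mu^{-1}$. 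This is possible because $\mathrm{Homeo}^+(I)$ is uniformly perfect: every element is a single commutator, which is enough even when $g = 1$ and leaves slack when $g > 1$. If $F$ is non-compact, then $\pi_1(F)$ is free, and a straightforward argument using disjoint representatives of the boundary circles allows us to choose a free generating set containing (representatives of) the relevant boundary loops; the prescription $b \mapsto \mu$ and all other boundary loops $\mapsto \mathrm{id}$ then extends to a homomorphism on $\pi_1(F)$.

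For $F$ compact planar with $n \geq 2$ boundary circles, $\pi_1(F) \cong F_{n-1}$ is free on $c_1, \ldots, c_{n-1}$, with $c_n = (c_1 \cdots c_{n-1})^{-1}$. Relabel so that $b = c_1$ and $b' = c_n$. Setting $\phi(c_1) = \mu$ and $\phi(c_j) = \mathrm{id}$ for $2 \leq j \leq n-1$ automatically forces $\phi(c_n) = \mu^{-1}$; with the induced orientations from $F$, this realizes the required holonomy $\mu^{-1}$ on $b'$. This case also explains why the planar obstruction is genuine: the boundary loops satisfy a relation that obstructs independent prescription of holonomies.

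The principal technical input, and the main potential obstacle, is the (uniform) perfectness of $\mathrm{Homeo}^+(I)$, needed in the positive-genus case: concretely, writing a prescribed orientation-preserving self-homeomorphism of $I$ as a commutator. This is classical; an explicit construction (built from a fundamental-domain argument once the fixed-point set of the target is understood) appears in \cite{MR1162560}, which is the reference cited in the statement. Everything else is routine: the suspension gives a well-defined foliation, it is transverse to $I$-fibers by construction, it is tangent to $F \times \partial I$ because endpoint-fixing of $\mathrm{Homeo}^+(I)$ means $F \times \{0\}$ and $F \times \{1\}$ are leaves, and the holonomy around each boundary component reads off the corresponding image of $\phi$.
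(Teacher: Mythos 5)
The paper states this lemma as a result taken from \cite{MR1162560} and offers no proof of its own, so there is no in-paper argument to compare against; what follows evaluates your proof on its own terms.

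Your reduction of the lemma to the existence of suitable representations $\pi_1(F) \to \mathrm{Homeo}^+(I)$ is correct: since the fiber $I$ is compact, any foliation of $F\times I$ transverse to the $I$-factor has $F\times\partial I$ as leaves and is a suspension (Ehresmann), and the boundary holonomies are exactly the images of the conjugacy classes of the boundary loops. The compact planar case is handled cleanly. For the compact positive-genus case, the argument does hinge on the statement that every element of $\mathrm{Homeo}^+(I)$ is a single commutator. That fact is true and worth stating with its one-line proof: for any $f \in \mathrm{Homeo}^+(I)$, $f$ and $f^2$ have the same fixed-point set and are conjugate on each complementary interval (both translation-like in the same direction), and these local conjugacies glue to a global $\psi$ with $\psi f \psi^{-1} = f^2$, hence $f = [\psi, f]$. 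However, your attribution of this fact to \cite{MR1162560} is almost certainly off: the ingredient this paper quotes from \cite{MR1162560} is the concatenation/conjugacy Lemma 2.1 (reproduced here as Lemma \ref{concatenation}), which is a different statement, used there for a more hands-on cut-along-annuli construction rather than a suspension/representation argument. So your approach is probably a genuinely different, arguably cleaner route than the cited one, but you should supply or correctly source the commutator-width-one claim rather than punting it to \cite{MR1162560}. Finally, the non-compact case is dismissed a bit quickly: the assertion that the boundary loops of a non-compact surface can always be completed to a free basis of $\pi_1(F)$ is true but does require an argument (for instance, exhaust $F$ by compact subsurfaces $F_k$ carrying all boundary circles of $F$; in each $\pi_1(F_k)$ the boundary circles of $F$ together with genus generators form a free basis once the ``frontier'' circle of $F_k$ is eliminated using the surface relation, and these bases are compatible under the direct limit). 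A sentence to this effect would close the gap.
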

The next lemma is a modification of Lemma 2.1 in \cite{MR1162560}.

\begin{lem}
Suppose $u,v$ are given orientation-preserving homeomorphisms of the interval. There exist an orientation-preserving homeomorphism $\tau$ such that $\tau$ is conjugate to:

a) $u \cdot \tau ^{-1} \cdot v$ \hspace{10mm} b) $u \cdot \tau \cdot v$ \hspace{10mm} c) $u \cdot \tau$ 

d) $\tau \cdot v$ \hspace{19mm} e) $u \cdot \tau^{-1}$ \hspace{11mm} f) $\tau^{-1} \cdot v$\\
Here $u \cdot v$ denotes concatenation, likewise for $u \cdot \tau ^{-1} \cdot v$.
\label{concatenation}
\end{lem}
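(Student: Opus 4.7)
The plan is to produce $\tau$ explicitly in each case as an infinite concatenation product of copies of $u$ and $v$ (possibly with inverses), exploiting the fact that reparametrizing $[0,1]$ realizes conjugation of homeomorphisms. The underlying principle is that if $\{I_k\}$ is a partition of $[0,1]$ into closed subintervals shrinking toward finitely many accumulation points, and $\sigma_k$ are orientation-preserving homeomorphisms of $[0,1]$, then pasting the affinely-rescaled $\sigma_k$ onto each $I_k$ yields a well-defined orientation-preserving homeomorphism of $[0,1]$. Two such pasted products that differ only by a re-indexing of their supporting intervals are automatically conjugate, via the piecewise affine reparametrization that implements the re-indexing.

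For the one-sided cases (c), (d), (e), (f), I would define $\tau$ as a one-sided infinite concatenation of a single building block. In case (c), set $\tau = u \cdot u \cdot u \cdots$, with the $k$-th copy of $u$ supported on $I_k = [1-2^{-(k-1)},\,1-2^{-k}]$. Then $u \cdot \tau$, interpreted as the concatenation with $u$ on $[0,1/2]$ and $\tau$ on $[1/2,1]$, has exactly the same sequence of rescaled $u$-blocks, shifted one step: the block on $[1/2,3/4]$ coming from the rescaled $\tau$ matches the block of $\tau$ there, and so on. The conjugating reparametrization is the affine ``tower shift'' sending $I_k$ to $I_{k+1}$. Cases (e), (d), and (f) are identical after replacing the building block by $u^{-1}$, or accumulating $v$'s at $0$, or both.

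For case (b), fix an interior point $p \in (0,1)$ and define $\tau$ by stacking rescaled copies of $u$ on subintervals accumulating to $p$ from the left and rescaled copies of $v$ on subintervals accumulating to $p$ from the right. Then $u \cdot \tau \cdot v$, read as a ternary concatenation on $[0,1/3] \cup [1/3,2/3] \cup [2/3,1]$, is exactly the same doubly-indexed stack with one extra $u$ prepended on the far left and one extra $v$ appended on the far right. The reparametrization that simultaneously compresses both stacks by one step conjugates $\tau$ to $u \cdot \tau \cdot v$.

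Case (a) is where I expect the main technical obstacle. The inversion of the central $\tau$ forbids a clean separation into a pure $u$-stack on the left and a pure $v$-stack on the right, because inverting a one-sided $u$-stack produces a $u^{-1}$-stack, and flanking this with $u$ on the outside does not reproduce the original pattern. The remedy is a self-similar construction: require $\tau$ to coincide with $u \cdot \tau^{-1} \cdot v$ on the ternary subdivision of $[0,1]$, and then iterate this defining relation into the middle third, alternating between $\tau$ and $\tau^{-1}$ at successive depths while the outer intervals get filled with a prescribed alternating pattern of $u^{\pm 1}$ and $v^{\pm 1}$. The hard part is verifying that the limiting map at the nested shrinking midpoint is continuous with continuous inverse, and orientation-preserving; once that is in hand, the defining self-similarity gives the required identity (hence conjugacy) directly, with the conjugating map the identity.
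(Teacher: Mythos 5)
Your treatments of (a)--(d) are correct and match the paper's construction; your self-similar fixed-point framing of (a) is a clean repackaging of the same alternating-block pattern the paper uses (the paper builds the alternating stack and observes conjugacy by re-indexing, you build it so that $\tau = u\cdot\tau^{-1}\cdot v$ holds identically), and the continuity worry at the nested midpoint is resolved exactly as you say since each nested block preserves its own interval.

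However, (e) and (f) contain a genuine gap. For (e) you propose the constant stack $\tau = u^{-1}\cdot u^{-1}\cdot u^{-1}\cdots$. Since inverting a concatenation inverts each block in place (as the paper notes, $(u\cdot v)^{-1}=u^{-1}\cdot v^{-1}$), this gives $\tau^{-1}=u\cdot u\cdot u\cdots$, hence $u\cdot\tau^{-1}=u\cdot u\cdot u\cdots$, a stack of rescaled copies of $u$ --- whereas $\tau$ is a stack of rescaled copies of $u^{-1}$. These are not conjugate in general: if $u(x)<x$ for all $x\in(0,1)$, then every non-fixed point moves rightward under $\tau$ but leftward under $u\cdot\tau^{-1}$, and an orientation-preserving conjugacy cannot reverse this. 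Case (f) fails for the same reason. The fix is precisely the observation you already used in (a): because $\tau\mapsto\tau^{-1}$ flips every block, the one-sided stack must alternate. Take $\tau = u\cdot u^{-1}\cdot u\cdot u^{-1}\cdots$ accumulating toward $1$ for (e), so that $\tau^{-1}=u^{-1}\cdot u\cdot u^{-1}\cdots$ and $u\cdot\tau^{-1}=u\cdot u^{-1}\cdot u\cdot u^{-1}\cdots$ is the same block sequence, conjugate by the tower-shift reparametrization; dually, $\tau=\cdots v^{-1}\cdot v\cdot v^{-1}\cdot v$ accumulating toward $0$ handles (f).
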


\begin{proof} 
Identify the interval with $[-1,1]$. By the concatenation $f$ of $u$ and $v$ we mean that there exist $-1 < a < 1$ such that $f_{|[-1,a]}$ is conjugate to $u$ and $f_{|[a,1]}$ is conjugate to $v$. The choice of $a$ does not affect the conjugacy class of $f$. Note that the inverse of $u \cdot v$ is equal to $u^{-1} \cdot v^{-1}$.

\textit{a)} Break this interval into symmetric pieces as     
\[   [-1, - \frac{1}{2} ] , [- \frac{1}{2}, - \frac{1}{3} ], \cdots , [\frac{1}{3},  \frac{1}{2} ] , [ \frac{1}{2}, 1 ].      \]
Define $\tau$ to be conjugate to $u$ and $v$ respectively on $[-1, - \frac{1}{2} ]$ and $[ \frac{1}{2}, 1 ]$. Then define it to be conjugate to $u^{-1}$ and $v^{-1}$ respectively on $[- \frac{1}{2}, - \frac{1}{3} ]$ and $[\frac{1}{3},  \frac{1}{2} ]$, and continue so on. Finally set $\tau (0) = 0$. As constructed we have $\tau = u \cdot u^{-1} \cdot \cdots \cdot v^{-1} \cdot v$, and therefore its inverse is $u^{-1} \cdot u \cdot \cdots \cdot v \cdot v^{-1}$. Hence $\tau$ is conjugate to $u \cdot \tau^{-1} \cdot v$.

\textit{b)} Similar to the previous part.

\textit{c)} Break the interval as the following:
\[ [-1,0] , [0,\frac{1}{2}],[\frac{1}{2},\frac{2}{3}], \cdots \]
On each subinterval, define $\tau$ to be conjugate to $u$. Finally set $\tau(1)=1$. Then we have $\tau = u \cdot u\cdot \cdots$, which is conjugate to $u \cdot \tau$.

\textit{d,e,f)} Similar to part \textit{c}.
\end{proof}
\subsection{Roussarie--Thurston general position}

Let $M$ be a closed, orientable 3-manifold and $\mathcal{F}$ be a taut foliation on $M$. Roussarie \cite{roussarie1974plongements} and Thurston \cite{thurston1972foliations} proved that any connected, embedded, incompressible surface $S \subset M$ can be isotoped such that $S$ is either a leaf or is transverse to $\mathcal{F}$ except at finitely many points of saddle tangencies. 

The theorem holds in more generality when $M$ has boundary; in this case we assume that $\mathcal{F}$ is transverse to $\partial M$, and each component of $\partial S$ is either transverse to $\mathcal{F}|\partial M$ or tangent to $\mathcal{F}|\partial M$. Both Roussarie and Thurston state the result for connected surfaces in transversely $C^2$-foliations.  With foliations now known to be at least $C^{\infty,0}$ the proof holds for all taut foliations.   The proof works for disconnected surfaces as well, the key point being that a surface tangent to a  compact leaf with nontrivial holonomy can be isotoped slightly to be a fully marked surface that is not a leaf.  In fact, unless $\mathcal{F}$ is a bundle, we can arrange that after the isotopy no component of $S$ is a leaf. The first author generalized it to the case of \textit{immersed} incompressible surfaces, and without any orientability assumption on the manifold and the foliation, only assuming that the foliation is tangentially smooth \cite{gabai2000combinatorial}.  

\subsection{Haefliger's theorem on compact leaves}

\begin{thm}(Haefliger \cite{haefliger1962varietes})
Let $\mathcal{F}$ be a codimension-one foliation of a compact $n$-manifold $M$. The union of compact leaves of $\mathcal{F}$ is a compact subset of $M$. Moreover if $\mathcal{F}$ is transversely orientable and $K$ is a compact $(n-1)$-dimensional manifold, the union of leaves of diffeomorphism type $K$ is compact as well.
\end{thm}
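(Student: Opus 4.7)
I would prove the first statement by showing that the set $\Sigma \subset M$ consisting of points lying on compact leaves is closed; compactness of $M$ then gives compactness of $\Sigma$. Suppose $p_n \in \Sigma$ converges to some $p \in M$, let $L_n$ denote the compact leaf through $p_n$, and let $L$ be the leaf through $p$. The goal is to show $L$ is compact.

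I would work locally as follows. Choose a foliation chart $U \cong D^{n-1} \times (-\varepsilon, \varepsilon)$ around $p$ with transversal $\tau = \{p\} \times (-\varepsilon, \varepsilon)$; after sliding along nearby leaves, I may assume $p_n \in \tau$. Since $L_n$ is compact and meets $\tau$ transversely, $L_n \cap \tau$ is a finite set. To show $L$ is compact, I would cover $L$ by foliation charts and argue that this cover can be chosen finite. Given any $q \in L$, connect $p$ to $q$ by a path $\alpha$ in $L$, cover $\alpha$ by a chain of foliation charts, and lift $\alpha$ to a path $\alpha_n$ in $L_n$ starting at $p_n$ using the local product structures; the endpoint $q_n$ lies on a transversal $\tau_q$ through $q$ and equals $h_\alpha(p_n)$, where $h_\alpha$ is the holonomy along $\alpha$. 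As $\alpha$ varies, the image $h_\alpha(p_n) \in L_n \cap \tau_q$ stays in a finite set of size $|L_n \cap \tau_q|$; a pigeonhole / holonomy-pseudogroup argument applied to returning paths in $L$ then produces a uniform upper bound on the number of distinct plaques of $L$ in any foliation chart, and hence shows that $L$ admits a finite cover by plaques, so it is compact.

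The main obstacle is the case in which $L$ has non-trivial one-sided holonomy. In that regime the compact leaves $L_n$ may accumulate on $L$ with $|L_n \cap \tau|$ unbounded, and controlling the orbit structure of the holonomy pseudogroup in the limit is the heart of Haefliger's argument; it depends crucially on the codimension-one hypothesis, which is what makes the transversal one-dimensional and hence allows recurrence arguments on $\tau$. For the refined second statement about a fixed diffeomorphism type $K$, once the compactness of the limit leaf $L$ is established, I would appeal to a Reeb-stability argument in a two-sided tubular collar of $L$ (which exists by transverse orientability): nearby compact leaves are realized as graphs of sections over $L$ and are therefore diffeomorphic to $L$, so the set of leaves of diffeomorphism type $K$ is likewise closed in $M$ and hence compact.
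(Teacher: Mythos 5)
The paper does not prove this theorem; it is cited from Haefliger's 1962 paper and used as a black box, so there is no argument in the paper to compare against. Judged on its own, your proposal correctly reduces the first statement to closedness of $\Sigma$, but the decisive step is missing, and you say so yourself. The pigeonhole argument you sketch — bounding the plaques of $L$ in a chart by the size of $L_n \cap \tau_q$ — collapses exactly in the case you flag, namely when $L_n$ accumulates on a leaf $L$ with nontrivial one-sided holonomy and $|L_n \cap \tau|$ grows without bound; in that regime the finite sets give no uniform bound whatsoever. Writing that ``controlling the orbit structure of the holonomy pseudogroup in the limit is the heart of Haefliger's argument'' is an honest acknowledgment that the essential content has been deferred rather than proved, so as submitted this is an outline, not a proof.

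The second statement also needs more than you supply. The assertion that nearby compact leaves are realized as graphs of sections over $L$ is not automatic: a priori the collar projection $L_n \to L$ is only a finite covering, and a degree $d>1$ cover of a genus $\geq 2$ surface is not diffeomorphic to it. What forces degree one is transverse orientability, via the elementary but necessary observation that a finitely generated group of orientation-preserving homeomorphisms of $[0,\varepsilon)$ fixing $0$ has no finite orbit of size greater than one in $(0,\varepsilon)$; you should state and use this. Prior to that, one must also justify why $L_n$ eventually lies entirely inside the two-sided collar of $L$ rather than merely passing through it repeatedly — without that containment the covering-map argument does not start. Neither point is addressed in the proposal.
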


By a \emph{packet of leaves}, we mean either 

\begin{enumerate}
\item an injectively immersed copy of $K \times [0,1]$, together with a foliation that is transverse to the interval factor, or 
\item a single leaf $K$.
\end{enumerate}
In the first case, $K \times \{0 \}$ and $K \times \{ 1 \}$ are called the \emph{end leaves of the packet}, while in the second case the single leaf $K$ is considered as the end leaf. Note the induced foliation on the packet does not have to be the product foliation. It follows from Haefliger's theorem that \emph{when $\mathcal{F}$ is transversely orientable and $K$ is compact, the union of leaves of $\mathcal{F}$ that are diffeomorphic to $K$ appear in finitely many packets}. To see this, define an equivalence relation on the leaves of $\mathcal{F}$ that are diffeomorphic to the compact $K$ as follows:
\[ K_1 \sim K_2   \hspace{5mm} \text{if} \hspace{5mm} \text{there is a packet of leaves, whose end leaves are } K_1 \text{ and } K_2.  \]
It is easy to see that this defines an equivalence relation, and there are finitely many equivalence classes.

\subsection{Poincar\'{e}--Hopf index theorem}  The classical Poincar\'{e}--Hopf index formula asserts that if $X$ is a vector field on a closed manifold $N$, then the Euler characteristic of $N$ is equal to the alternating sum of the indices.  In this paper we use a special case of a generalization due to Goodman \cite{goodman1975closed} which states that if $N$ is a compact 3-manifold with a non-vanishing vector field that points in (respectively out, respectively tangent) along $A$ (respectively $B$, respectively $T$) where $A\cup B\cup T=\partial N$, then $\chi(A)=\chi(B)$.  In application, $\mathcal{F}$ is a foliation on $N$ where $A$ (respectively $B$) is union of the components of $\partial N$ consisting of leaves of $\mathcal{F}$  where the normal orientation  points in (respectively out) and $T$ is the union of (torus) components of $\partial N$ transverse to leaves of $\mathcal{F}$.

\subsection{Embedded tori in closed hyperbolic 3-manifolds}
The following is standard. 

\begin{lem}
Let $M$ be an irreducible 3-manifold and let $T\subset \text{int}(M)$ be a compressible embedded torus. Then, either $T$ bounds a solid torus inside $M$ or $T$ is contained in a 3-ball. In particular if $T$ contains a curve homotopically essential in $M$, then $T$ bounds a solid torus. 
\end{lem}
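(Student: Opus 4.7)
My plan is a standard surgery argument leveraging the compressing disk and irreducibility. First I would pick a compressing disk $D \subset M$ for $T$, so $\partial D$ is an essential simple closed curve on $T$, and take a thin regular neighborhood $N(D) \cong D \times [-1,1]$. This neighborhood meets $T$ in an annular collar $A = \partial D \times [-1,1]$ of $\partial D$ on $T$, and $\partial N(D)$ contains two parallel disks $D_{\pm} := D \times \{\pm 1\}$. Surgering $T$ along $D$ then produces the embedded sphere
\[ S = (T \setminus A) \cup D_+ \cup D_-. \]
Since $M$ is irreducible, $S$ bounds a 3-ball $B \subset M$.

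The remaining argument hinges on which side of $S$ the 1-handle $N(D)$ sits on. Because $N(D)$ is connected and meets $S$ only in the two disks $D_\pm$, it lies entirely on one side of $S$, so there are exactly two cases. In the first, $N(D) \subset B$: then $A \subset B$, so
\[ T = \bigl(S \setminus (D_+ \cup D_-)\bigr) \cup A \subset \overline{B}, \]
and $T$ is contained in a 3-ball. In the second, $N(D) \cap \mathrm{int}(B) = \emptyset$: then I would consider $V := B \cup N(D)$, which is a 3-ball with a 1-handle attached to its boundary along the disks $D_\pm$, hence a genus-one handlebody, i.e., a solid torus. A direct check gives $\partial V = \bigl(S \setminus (D_+ \cup D_-)\bigr) \cup A = T$, so $T$ bounds a solid torus.

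For the \emph{in particular} clause, if $T$ contains a curve $\gamma$ essential in $M$, then $T$ cannot lie in any 3-ball $B_0 \subset M$: otherwise $\gamma \subset B_0$ and $\pi_1(B_0) = 1$ would force $\gamma$ to be null-homotopic in $M$, contrary to hypothesis. Hence the dichotomy forces $T$ to bound a solid torus. The only real bookkeeping issue is confirming that the two cases exhaust all alternatives and that the handlebody boundary in the second case is exactly $T$; both are immediate from the explicit description of the surgery, so I do not foresee any serious obstacle.
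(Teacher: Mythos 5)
Your proof is correct and follows essentially the same route as the paper: compress $T$ along $D$, invoke irreducibility to cap the resulting sphere with a 3-ball $B$, then split into the two cases according to which side of $B$ the compressing disk (equivalently, its neighborhood) lies on. You simply spell out in more detail the regular-neighborhood bookkeeping and the identification of $B \cup N(D)$ as a solid torus, which the paper leaves implicit.
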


\begin{proof}
Since $T$ is compressible, some simple closed curve in $T$ bounds an essential embedded disk $D\subset M$ with $D\cap T=\partial D$. Surger $T$ along $D$ to obtain an embedded 2-sphere $S$. Since $M$ is irreducible, $S$ bounds a 3-ball $B$. There are two cases. If $D\cap B=\emptyset$ then $T$ bounds a solid torus while if $D\subset B$, then $T \subset B$.
\end{proof}

\begin{cor}
Let $N$ be a closed hyperbolic 3-manifold and $T\subset N$ be an embedded torus. If $T$ contains a curve homotopically essential in $M$, then $T$ bounds a solid torus.
\label{torus}
\end{cor}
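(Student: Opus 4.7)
The plan is to reduce the corollary to the preceding lemma by first arguing that the torus $T$ must be compressible in $N$. A closed hyperbolic $3$-manifold is irreducible, so the lemma applies, and it is also atoroidal in the sense that it contains no $\pi_1$-injective embedded tori (this is a consequence of the existence of a hyperbolic metric via the equivariant tubular neighborhood theorem, or equivalently of the fact that $\pi_1(N)$ contains no $\mathbb{Z}\oplus\mathbb{Z}$ subgroup). Therefore the inclusion-induced map $\pi_1(T)\to\pi_1(N)$ has nontrivial kernel, and by the loop theorem the torus $T$ is compressible.

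Having established compressibility, I would directly invoke the previous lemma, which gives two alternatives: either $T$ bounds a solid torus in $N$, in which case we are done, or $T$ lies inside a $3$-ball $B\subset N$. In this second case every loop on $T$ is null-homotopic in $B$ and hence null-homotopic in $N$, contradicting the assumption that $T$ carries a curve which is homotopically essential in $N$. (I note in passing that the statement of the corollary writes ``essential in $M$''; I read this as a typo for $N$, the manifold appearing in the hypothesis.) Thus only the first alternative can occur, and $T$ bounds a solid torus.

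The argument is therefore really just a direct specialization of the preceding lemma, together with the two standard facts about closed hyperbolic $3$-manifolds: irreducibility and atoroidality. I do not expect any genuine obstacle; the only point that requires a sentence of justification is the reduction to the compressible case, which is why I would spell out the atoroidality step rather than just cite it in passing.
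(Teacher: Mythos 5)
Your proof is correct and follows essentially the same route as the paper's: establish irreducibility, use atoroidality (via the loop theorem) to get compressibility, then invoke the ``in particular'' clause of the preceding lemma. The paper simply states ``any torus in a closed hyperbolic 3-manifold is compressible'' without unpacking it, whereas you spell out the atoroidality/loop-theorem step explicitly; you are also right that ``$M$'' in the statement is a typo for ``$N$.''
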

\begin{proof}
Hyperbolic 3-manifolds are irreducible and any torus in a closed hyperbolic 3-manifold is compressible. Therefore, the previous lemma applies.
\end{proof}

\section{Thurston's Euler class one conjecture}
Roussarie \cite{roussarie1974plongements} and Thurston \cite{thurston1972foliations} realized that taut foliations and embedded incompressible surfaces have an `efficient intersection property', and furthermore Thurston deduced inequality (\ref{inequality}) from that general position \cite{thurston1986norm}. Thurston introduced a natural norm on second homology of 3-manifolds, now called the \emph{Thurston norm} and studied connections between taut foliations and this norm. Putting inequality (\ref{inequality}) in the language of the Thurston norm, he obtained that the Euler class of any taut foliation of a 3-manifold has dual Thurston norm at most one.

\begin{thm}[Thurston]Let M be a compact oriented 3-manifold and $\mathcal{F}$ a codimension-one, transversely oriented foliation of M. Suppose that $\mathcal{F}$ contains no Reeb components and each component of $\partial M$ is either a leaf of $\mathcal{F}$ or a surface T such that $\mathcal{F}$ is transverse to T and each leaf of $\mathcal{F}$ which intersects $T$ also intersects a closed transverse curve (e.g. $\mathcal{F}|T$ has no Reeb components). Then 
\[ x^*(e)\leq 1 \]
holds in 
\begin{enumerate}
	\item $H^2(M)$.
	\item $H^2(M, \partial M)$.
\end{enumerate} 
Here $x^*$ is the dual Thurston norm and $e$ is the Euler class of the tangent plane bundle to the foliation $\mathcal{F}$. 
\label{thurston}
\end{thm}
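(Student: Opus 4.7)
The plan is to unwind the definition of the dual Thurston norm: $x^*(e)\le 1$ means $|\langle e,\alpha\rangle|\le x(\alpha)$ for every $\alpha\in H_2(M)$ in case (1) and every $\alpha\in H_2(M,\partial M)$ in case (2). Fix such an $\alpha$ and pick a norm-minimizing oriented embedded representative $S$. Since $\mathcal{F}$ has no Reeb components (and the boundary hypothesis supplies the analogous statement along transverse components of $\partial M$), Novikov's theorem implies that leaves of $\mathcal{F}$ are $\pi_1$-injective and that every $2$-sphere transverse to $\mathcal{F}$ bounds a ball; hence one may assume $S$ has no sphere or disk components, so that $x(\alpha)=\sum_i|\chi(S_i)|=\sum_i(-\chi(S_i))$.

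Next, apply Roussarie--Thurston general position (as quoted earlier) to isotope $S$ so that each component $S_i$ is either a leaf of $\mathcal{F}$ or is transverse to $\mathcal{F}$ except at finitely many Morse saddle tangencies. In the relative case each component of $\partial S$ is placed so that it is either a transversal to $\mathcal{F}|\partial M$ on transverse boundary components $T$, or contained in a leaf of $\mathcal{F}|\partial M$ on tangential components. The closed-transversal hypothesis on each leaf meeting $T$ is precisely what lets the Roussarie--Thurston isotopy extend to the boundary without creating Reeb components on $T$. Mark each saddle with sign $+1$ or $-1$ according to whether the transverse orientations of $S$ and $\mathcal{F}$ agree or disagree at the tangency, and write $n_i^+,n_i^-$ for the counts on $S_i$.

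Two index computations then finish the argument. First, the induced singular line field on a non-leaf component $S_i$ has only saddles, each of Poincar\'e--Hopf index $-1$; Poincar\'e--Hopf (in the closed case) or Goodman's boundary version from the excerpt (in the relative case, where boundary contributions vanish because $\partial S_i$ is split into transverse and tangential parts handled symmetrically) gives $n_i^++n_i^-=-\chi(S_i)$. Second, the signed saddle count reconstructs the Euler pairing: $n_i^+-n_i^-=-\langle e(\mathcal{F}),[S_i]\rangle$, since $e(\mathcal{F})|_{S_i}$ is exactly the obstruction to finding a section of the $2$-plane bundle $T\mathcal{F}|_{S_i}$ transverse to $TS_i$, and its zeros with signs are the marked tangencies (for leaf components one instead uses $T\mathcal{F}|_L=TL$, so $\langle e(\mathcal{F}),[L]\rangle=\chi(L)$ with the appropriate co-orientation sign). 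Summing over $i$ and applying the triangle inequality,
\[
|\langle e(\mathcal{F}),[S]\rangle| \;\le\; \sum_i (n_i^++n_i^-) \;=\; \sum_i |\chi(S_i)| \;=\; x(\alpha),
\]
which is the desired inequality.

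The main technical obstacle is the relative statement (2). One must verify that the Roussarie--Thurston isotopy can be performed relative to $\partial M$ while respecting the prescribed transverse/tangential behavior of $\partial S$, and then apply Goodman's generalization of Poincar\'e--Hopf on each $S_i$ so that boundary contributions really do cancel out of the identity $n_i^++n_i^-=-\chi(S_i)$. Both steps lean on the closed-transversal hypothesis for leaves meeting $T$, which ensures that $\mathcal{F}|T$ has no half-Reeb components and that cutting $M$ along $S$ does not destroy the transverse-curve conditions needed for general position.
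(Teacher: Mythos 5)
Your proposal takes essentially the same route as the paper's sketch: isotope a norm-minimizing incompressible representative into Roussarie--Thurston general position, count saddle tangencies with signs, apply Poincar\'e--Hopf to identify the unsigned count with $|\chi(S)|$ and the signed count with the Euler pairing, and finish with the triangle inequality. The only small point the paper flags that you omit is that, since the hypothesis is merely ``no Reeb components'' rather than tautness, Roussarie--Thurston general position may also leave circle tangencies, and one must observe (as the paper does parenthetically) that these contribute zero both to the index count and to the Euler pairing.
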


The idea of the proof  of Theorem \ref{thurston} is the following \cite{thurston1986norm}. Let $a \in H_2(M , \partial M ; \mathbb{Z})$ be an integral homology class. In order to compute the quantity $\langle e(\mathcal{F}), [S] \rangle$, first represent $a$ by an incompressible surface $S$. Putting $S$ in Roussarie--Thurston general position, we may assume that $S$ is transverse to $\mathcal{F}$ except at finitely many points of saddle, or \emph{circle tangencies}. In fact circle tangencies can be avoided whenever the foliation is taut as shown by Thurston \cite{thurston1972foliations}, so we do not discuss them here (In any case their contribution to $\langle e(\mathcal{F}), [S] \rangle$ is zero even if they exist). Assign $-1$ (respectively $+1$) to a saddle tangency $p \in S$, if the oriented normal vectors to $S$ and $\mathcal{F}$ agree (respectively disagree) at the point $p$. Then the quantity $\langle e(\mathcal{F}), [S] \rangle$ can be obtained by adding up all the numbers corresponding to saddle tangencies. Now by the Poincar\'{e}--Hopf formula, the number of saddle tangencies is equal to $|\chi(S)|$. It is clear that the sum of the numbers associated to saddle tangencies is maximum in absolute value, when all of the numbers are equal so there is no cancellation. Hence we have 
\[ |\langle e(\mathcal{F}), [S] \rangle| \leq |\chi(S)|. \]

The proof of Thurston's theorem implies the following important property of fully marked surfaces: any fully marked surface is norm-minimizing.

\begin{cor}
Let $M \neq S^2 \times S^1$ be a closed orientable 3-manifold, and $\mathcal{F}$ be a taut foliation on $M$. Any fully marked surface $S$ has no sphere component, and is norm-minimizing and incompressible. Moreover, any compact leaf of the induced foliation on $S$ is $\pi_1$-injective in $S$, and hence in $M$.
\label{incompressible}
\end{cor}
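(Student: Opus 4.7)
My plan is to handle the four assertions in turn: first rule out sphere components, then derive norm-minimality directly from Thurston's inequality, then obtain incompressibility from norm-minimality, and finally establish $\pi_1$-injectivity of compact leaves of $\mathcal{F}|S$ via an index calculation on a hypothetical innermost disk.

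If a component of $S$ were a sphere leaf of $\mathcal{F}$, Reeb stability applied to a simply connected leaf would produce a saturated product neighborhood foliated by spheres; combined with the closedness of the set of leaves diffeomorphic to $S^2$ (Haefliger's theorem), every leaf of $\mathcal{F}$ would be a sphere, the leaf space would be $S^1$, and $M$ would be an $S^2$-bundle over $S^1$, hence $S^2 \times S^1$, contradicting the hypothesis. If instead a sphere component $S_0$ of $S$ had only saddle tangencies, Poincar\'e--Hopf applied to the induced singular foliation $\mathcal{F}|S_0$ would give $2 = \chi(S^2) = -k$ with $k \geq 0$ the number of saddles, which is impossible. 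With sphere components ruled out, every component of $S$ has nonpositive Euler characteristic, so the Thurston norm $x(S)$ equals $|\chi(S)|$, and full-markedness gives $|\langle e(\mathcal{F}), [S]\rangle| = |\chi(S)| = x(S)$. For any $S'$ with $[S'] = [S]$, Theorem~\ref{thurston} then gives $x(S') \geq |\langle e(\mathcal{F}), [S']\rangle| = x(S)$, proving norm-minimality.

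For incompressibility I would invoke the standard compression argument on components of genus at least two: any compressing disk would surger $S$ to a surface in the same homology class with strictly smaller Thurston norm (any sphere components created by the surgery drop out of the norm), contradicting norm-minimality. For torus and annulus components, incompressibility is built into the convention for fully marked surfaces adopted in the introduction. Finally, to show $\pi_1$-injectivity of a compact leaf $C$ of $\mathcal{F}|S$, suppose $C$ were null-homotopic in $S$; then it would bound an embedded disk $D \subset S$, and the induced foliation $\mathcal{F}|D$ would have $\partial D = C$ as a boundary leaf with only saddle interior singularities inherited from $S$. Poincar\'e--Hopf for a Morse-type singular foliation with leafwise boundary would then force $1 = \chi(D) = -k \leq 0$, a contradiction. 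Hence every compact leaf of $\mathcal{F}|S$ is $\pi_1$-injective in $S$, and the incompressibility of $S$ in $M$ promotes this to $\pi_1$-injectivity in $M$.

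The step I expect to require the most care is the Poincar\'e--Hopf calculation on the disk with leafwise boundary --- a standard fact for transversely oriented Morse-type singular foliations on compact surfaces, but one whose sign conventions and boundary behavior must be stated precisely. The rest of the argument unwraps directly from the definitions together with Thurston's inequality (Theorem~\ref{thurston}), Reeb stability, and Haefliger's closedness theorem.
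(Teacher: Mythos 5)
Your proof is correct. The paper gives no explicit proof of this corollary --- it only remarks that norm-minimality follows from the proof of Theorem~\ref{thurston} --- and your argument supplies the standard deductions the authors have in mind: Reeb stability plus Haefliger to rule out sphere leaves, Poincar\'e--Hopf to rule out transverse sphere components and to show compact leaves of $\mathcal{F}|S$ cannot bound a disk in $S$, Thurston's inequality together with $x(S)=|\chi(S)|=|\langle e(\mathcal{F}),[S]\rangle|$ for norm-minimality, compression for incompressibility of higher-genus components, and you correctly flag that for torus (and annulus) components incompressibility is a standing convention of the paper rather than a consequence of norm-minimality.
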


Theorem \ref{thurston} shows that not every integral second cohomology class can be realized as the Euler class of some taut foliation. In particular, the number of such classes is finite if $M$ is an-annular and atoroidal. This is in contrast with the case of general foliations on closed 3-manifolds, where Wood showed that every integral second cohomology class satisfying the parity condition can be realized as the Euler class of some transversely oriented foliation \cite{wood1969foliations} (see \cite{firstpaper}). Conversely Thurston conjectured the following (see \cite{thurston1986norm}, Page 129, Conjecture 3). 

\begin{conj}[Thurston]If M has no `essential' singular tori, and if $a \in H^2(M;\mathbb{Z})$
is any element with $x^*(a) = 1$, then there is some [taut] foliation $\mathcal{F}$ of M such that $e(\mathcal{F}) = a$.
\label{conj}
\end{conj}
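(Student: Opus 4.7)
The plan is to attempt Thurston's conjecture by extending Gabai's partial result (Theorem \ref{thm:Gabai}) from vertices of the dual unit ball to all integral points of dual norm one. For a vertex $a$, Gabai's construction builds a sutured manifold hierarchy out of a norm-minimizing surface dual to $a$, and the resulting taut foliation has Euler class exactly $a$. The goal is to handle non-vertex integral points by a similar but more delicate construction.

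Given $a \in H^2(M; \mathbb{Z})$ with $x^*(a) = 1$, I would first choose an oriented norm-minimizing surface $S$ representing a homology class on which $a$ attains its norm, so that $\langle a, [S]\rangle = \pm \chi(S)$, using the parity condition to ensure the Euler characteristic has the correct value. Next, I would attempt to construct a taut foliation $\mathcal{F}$ making $S$ a union of leaves; by the index-sum formula this forces $\langle e(\mathcal{F}), [S]\rangle = \pm \chi(S) = \langle a, [S]\rangle$, giving the correct value on $[S]$. To control the Euler class on other homology classes, one might start with a foliation from Gabai's construction at a nearby vertex and try to deform it --- via $I$-bundle replacement, spinning, or similar modifications preserving tautness --- to shift its Euler class toward $a$.

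The hard part will be controlling the Euler class \emph{globally}, not just on $[S]$. At vertices, $a$ is essentially determined by its value on a single homology class, which is why Gabai's hierarchy approach suffices there; at non-vertex integral points one must match $a$ simultaneously on a larger family of classes, and there is no evident mechanism to do so. Moreover, Theorem \ref{fullymarked} of this paper reveals strong rigidity working against the conjecture: whenever a surface is algebraically fully marked for a taut foliation, one can always produce another taut foliation with the same plane-field homotopy class in which a homologous surface is an actual union of leaves. The companion paper \cite{firstpaper} exploits exactly this rigidity to construct integral norm-one cohomology classes satisfying the parity condition that cannot arise as Euler classes of any taut foliation. Thus the outlined extension of Gabai's construction must fail at certain non-vertex integral points, and the conjecture as stated is in fact false.
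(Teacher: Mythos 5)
You have correctly recognized that this statement is a conjecture which the paper does \emph{not} prove --- it refutes it. The paper states Conjecture \ref{conj} only as historical background, and the entire point of the paper (via Theorem \ref{fullymarked}, Corollary \ref{fullymarked2}, and the companion paper \cite{firstpaper}) is to produce counterexamples: there are infinitely many closed hyperbolic 3-manifolds with an integral norm-one class satisfying the parity condition that is not the Euler class of any taut foliation. Your conclusion that ``the conjecture as stated is in fact false'' is therefore exactly the paper's position.

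Your reasoning for \emph{why} the naive extension of Gabai's Theorem \ref{thm:Gabai} fails is also sound: at a vertex $a$, the face dual to $a$ is top-dimensional, so one norm-minimizing surface (and the basis trick in the proof of Theorem \ref{thm:Gabai}) pins down $e(\mathcal{F})$ on all of $H_2$, whereas at a non-vertex integral point $a$, realizing $\langle e(\mathcal{F}), [S]\rangle = \pm\chi(S)$ for a single $S$ leaves $e(\mathcal{F})$ undetermined in other directions, and there is no hierarchy argument forcing it to equal $a$ globally. You also correctly identify the mechanism by which Theorem \ref{fullymarked} becomes an \emph{obstruction}: it shows that any taut foliation realizing such an $a$ could be modified (preserving the homotopy class of the plane field) so that a homologous surface is a union of leaves, and \cite{firstpaper} exploits this to derive a contradiction for suitable choices of $M$ and $a$. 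One small clarification worth making explicit: the counterexample argument in \cite{firstpaper} uses the stronger Corollary \ref{fullymarked2} (requiring uniqueness of the norm-minimizing surface) rather than Theorem \ref{fullymarked} directly, and the manifolds $M$ are constructed so that this uniqueness hypothesis holds; without it, the homologous-surface flexibility in Theorem \ref{fullymarked} would not yield the needed contradiction.
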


Thurston showed that the unit ball for the dual Thurston norm is a convex polyhedron whose vertices are integral points \cite{thurston1986norm}. Later the first author proved the conjecture holds for the vertices of the dual ball (See \cite{gabai1997problems}, Page 24, Remark 7.3). 

\newtheorem*{thm:Gabai}{Theorem \ref{thm:Gabai}}
\begin{thm:Gabai}
[Gabai]
Let $M$ be a compact oriented irreducible 3-manifold, possibly with toral boundary, and let $a \in H^2(M, \partial M; \mathbb{R})$ be a vertex of the dual unit ball. Then there is a taut foliation on $M$ whose Euler class is equal to $a$.
\end{thm:Gabai}

\begin{proof}
Let $a \in H^2(M, \partial M)$ be a vertex of the dual unit ball, and $\mathcal{C}$ be the face dual to the point $a$. As $a$ is a vertex, $\mathcal{C}$ is a top-dimensional face. Let $\bar{a} \in H_2(M, \partial M)$ be a rational point in $\mathcal{C}$, and $S$ be an embedded norm-minimizing surface representing a multiple of the homology class $\bar{a}$. Denote by \\
\[ \langle \hspace{1mm}  , \hspace{1mm} \rangle : H^2(M,\partial M) \times H_2(M, \partial M) \longrightarrow \mathbb{R}, \]
the pairing between the second cohomology and homology of $M$. By definition we have
\[ \langle a , \bar{a} \rangle = 1 = x(\bar{a}). \]
By \cite{gabai1983}, there exists a taut foliation $\mathcal{F}$ on $M$ such that $S$ is a leaf of $\mathcal{F}$, the foliation $\mathcal{F}$ is transverse to $\partial M$, and $\partial \mathcal{F}$ has no Reeb component. We show that $e(\mathcal{F})=a$.\\

Since $\bar{a}$ is in the interior of the top-dimensional face $\mathcal{C}$, we can choose a basis $\bar{a}_1 , \bar{a}_2 , \cdots , \bar{a}_n$ for the second homology of $M$ such that each $\bar{a}_i$ lies in the closure of $\mathcal{C}$ and
\[ \bar{a}=t_1 \bar{a}_1 + \cdots + t_n \bar{a}_n ,\]
with $0<t_i<1$ and $\sum_{i=1}^{n} t_i =1$. By hypothesis, for each $1 \leq i \leq n$ we have 
\[ \langle a , \bar{a}_i \rangle = 1 = x(\bar{a}_i). \]
Since $S$ is a leaf of $\mathcal{F}$
\[ |\langle e(\mathcal{F}) , [S] \rangle| = |\chi(S)| \]
\[ \implies |\langle e(\mathcal{F}) , \bar{a} \rangle| = 1 .\]
Therefore we have\\
\[ 1 = |\langle e(\mathcal{F}),\bar{a} \rangle|=|\sum_i t_i \hspace{1mm} \langle e(\mathcal{F}) , \bar{a}_i \rangle| \underbrace{ \leq }_{(1)} \sum_i t_i |\hspace{1mm} \langle e(\mathcal{F}) , \bar{a}_i \rangle| \underbrace{\leq}_{(2)} \sum t_i \hspace{1mm}x(\bar{a}_i)= \sum_i t_i = 1.\]
Here the implication (1) is the triangle inequality, and (2) is the fact that the Euler class $e$ has dual norm at most one. So each of the inequalities in (2) should be in fact equality. As $t_i > 0$, for each index $i$ we have\\
\[ \langle e(\mathcal{F}),\bar{a}_i \rangle = x(\bar{a}_i)=\langle a,\bar{a}_i \rangle .\]
Since $\bar{a}_i$ are a basis for the second homology of $M$, we have $e(\mathcal{F})=a   $.
\end{proof}

\section{Eliminating Bad Solid Tori}

In section \S 7 we will give a procedure that starts with a fully marked surface $S$ in the 3-manifold $M$ with a taut foliation $\mathcal{F}$ and produces a new foliation $\mathcal{G}$ with $S$ a union of leaves, such that $\mathcal{F}$ and $\mathcal{G}$ have homotopic plane fields.  In general $\mathcal{G}$ will have Reeb components, hence will not be taut.  The problem is that the pair $(\mathcal{F},S)$ may have \emph{bad solid tori}.  In this section we show that after  replacing $\mathcal{F}$ with $\mathcal{F}'$ by $I$-bundle replacement and replacing $S$ with a homologous surface $S'$, then $(\mathcal{F}',S')$ has no bad solid tori.  Our $S'$ may have more or less components than $S$, in particular $S$ itself may be disconnected.
\vskip 8pt

\begin{definition}
Let $S$ be a compact surface, and $\mathcal{F}|S$ be a singular foliation on $S$ with only finitely many singular points $P$, all of which are saddles. If $L$ is a leaf of $\mathcal{F}|S$ passing through a singularity, then a \textbf{separatrix} is a connected component of $L \setminus P$ together with the one or two points of P from which it emanates from.  If no separatrix is compact, then we say that $F$ has the \textbf{compact-free separatrix property}.
\end{definition}

The following lemma allows for simplification of various technical issues in this paper.
\begin{lem}
Let $M$ be a compact orientable 3-manifold and $\mathcal{F}$ be a taut foliation of $M$ such that every component of $\partial M$ is either tangent or transverse to $\mathcal{F}$. Let $S$ be an embedded orientable incompressible surface that is transverse to $\mathcal{F}$ except at finitely many saddles tangencies. One can do I-bundle replacement along some of the leaves of $\mathcal{F}$, together with an arbitrary small isotopy of $S$ to obtain $\mathcal{F}'$ and $S'$ such that $\mathcal{F'}|S'$ has the compact-free separatrix property.
\label{noseparatrix}
\end{lem}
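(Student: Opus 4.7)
I would first identify the finite set of leaves $L_1, \ldots, L_m$ of $\mathcal{F}$ that contain at least one saddle tangency of $S$. Since the saddle tangencies are finite in number, this list is finite. I then perform an $I$-bundle replacement along each $L_i$, replacing it by a packet $L_i \times I$ foliated by a foliated $I$-bundle transverse to the $I$-factor. Call the resulting foliation $\mathcal{F}^{*}$; by Observation \ref{I-bundle-replacement}, tautness is preserved and any transversal to $\mathcal{F}$ remains a transversal to $\mathcal{F}^{*}$.

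Next I perform an arbitrarily small isotopy of $S$, supported in a small neighborhood of the union of the packets, obtaining $S'$. Within each packet the isotopy moves each saddle tangency off of the original $L_i$ onto a generic interior leaf of the packet, with the interior transversal parameter values $t_p$ chosen pairwise distinct for distinct saddles. By construction, no two saddles of $S'$ lie on the same leaf of $\mathcal{F}^{*}$, which immediately rules out every compact separatrix joining two distinct saddles (any saddle connection would force both of its endpoint saddles to share a leaf).

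The main technical obstacle will be eliminating saddle loops, that is, compact separatrices joining a single saddle $p$ to itself. Such a loop lies in the intersection of $S'$ with the leaf of $\mathcal{F}^{*}$ through $p$ and closes up at $p$; a priori the Morse-theoretic structure of the projection $\pi \colon S \cap (L_i \times I) \to I$ can force such loops to appear at critical levels when the packet carries the trivial product foliation. To overcome this, the filling foliated $I$-bundle in the replacement should be chosen not as the trivial product but with a suitably generic suspension holonomy $\phi_i \colon \pi_1(L_i) \to \mathrm{Homeo}^{+}(I)$, whose action on the interior of $I$ is free on a dense set of values. For a generic $t_p$ in this dense set, the stabilizer of $t_p$ is trivial, so the leaf of $\mathcal{F}^{*}$ through $p$ is simply connected (an infinite cover of $L_i$). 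Then the four local arms of the cross at $p$ in the level set cannot close up into a loop through $p$, because that would require a nontrivial closed curve in a simply-connected surface. A generic small isotopy of $S$ places each new saddle onto such a leaf, and combined with the distinctness of the $t_p$'s this yields the compact-free separatrix property.
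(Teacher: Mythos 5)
Your strategy—blow up every leaf meeting a saddle with a generic suspension holonomy, push the saddles to generic interior levels, and argue via simple-connectedness of the new leaves—is a genuinely different route from the paper's (the paper treats one compact separatrix at a time and prescribes the holonomy along that specific loop $\gamma$ to be a shift, which directly ``opens up'' $\gamma$ into non-compact separatrices). However, as written your proof has a real gap at the decisive step. You assert that a saddle loop at $p$ ``cannot close up \ldots because that would require a nontrivial closed curve in a simply-connected surface.'' This presupposes precisely what must be proven: that the putative saddle loop $\gamma$ is homotopically essential in the leaf. A simply connected surface contains plenty of closed curves; it just contains no \emph{essential} ones. You need the chain of reasoning the paper supplies: if $\gamma$ bounded a disk $D$ in $S'$, the Poincar\'e--Hopf formula applied to $\mathcal{F}'|D$ would force a singularity of positive index in the interior of $D$, contradicting the hypothesis that all tangencies are saddles; hence $\gamma$ is essential in $S'$; since $S'$ is incompressible and two-sided, $\gamma$ is essential in $M$, hence in the leaf containing it. Only then does simple-connectedness of the leaf give a contradiction. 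Note the paper needs this same essentiality fact for a different reason---so that the holonomy representation can be chosen to send $[\gamma]$ to a shift---so this step is unavoidable in either approach and cannot be elided.

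A secondary imprecision: ``the interior transversal parameter values $t_p$ chosen pairwise distinct \ldots\ by construction, no two saddles of $S'$ lie on the same leaf'' is only automatic when the filling $I$-bundle is the trivial product. But you need the holonomy to be nontrivial (indeed, to act freely on a dense set) for the simple-connectedness argument, and then two interior parameter values lie on the same leaf exactly when they are in the same holonomy orbit. The fix is easy---since $\pi_1(L_i)$ is countable, each orbit is countable, so a generic choice of the $t_p$ avoids both the fixed-point set of every nontrivial holonomy element and the orbits of the other $t_q$'s---but it should be said, as it is the same genericity mechanism that underlies the rest of your argument. With these two points repaired, your proof goes through and is a legitimate alternative to the paper's.
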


\begin{proof}

Let $p_1, \cdots , p_n$ be the saddle singularities on $S$. By Theorem 7.1.10 from \cite{MR1732868}, we may assume that no two distinct $p_i$ are connected by a separatrix inside $\mathcal{F}|S$. This does not need the hypotheses on $\mathcal{F}$ being taut, or  $S$ being incompressible.

The next step is to get rid of a separatrix from a singularity to itself. Let $\gamma$ be a separatrix from $p_1$ to itself, and $L$ be the leaf of $\mathcal{F}$ containing $\gamma$. The loop $\gamma$ is homotopically nontrivial in $S$; otherwise if $D$ was a disk bounding $\gamma$ in $S$, there would have been a center tangency inside $D$ by the Poincar\'{e}--Hopf formula. 

The surface $S$ is embedded, incompressible and two-sided. Hence $S$ is $\pi_1$-injective. Therefore $\gamma$ is homotopically nontrivial in $M$ and hence in $L$ as well. 

Since $p_1$ is a saddle tangency, there are locally 4 separatrices coming out of it. There are either two compact separatrices passing through $p_1$ or just one. Consider a small standard neighborhood of $p_1$ where the surface $S$ can be seen as the graph of the function $z = x^2- y^2$ and the foliation $\mathcal{F}$ is by horizontal planes. Do an I-bundle replacement along $L$, and call the resulting foliation $\mathcal{F}'$. Note that $\mathcal{F'}|S$ is obtained from $\mathcal{F}|S$ by a singular $I$-bundle replacement along $L \cap S$. We abuse notation by calling the new singularity of $\mathcal{F}'|S$ by $p_1$. Note that if the holonomy along the loop $\gamma$ is a shift, then the number of compact separatrices passing through $p_1$ is reduced, and the status of separatrices passing through $p_j$ for $j>1$ is not changed. Repeat this with other compact separatrices.
\end{proof}
\begin{remark} \label{reeb unchanged} If every Reeb component of $\mathcal{F}|S$ is disjoint from the singularities, then the operation of Lemma \ref{noseparatrix} maintains that property and keeps the number of Reeb components unchanged.  \end{remark}

\begin{notation} Given the closed oriented embedded surface $S\subset M$, let $M_1$ denote $M\setminus S$ compactified with the induced path metric. We sometimes call $M_1$ the closed complement of $M \setminus S$. Given the foliation $\mathcal{F}$ on $M$, then $\mathcal{F}_1$ denotes the induced foliation on $M_1$.   We denote by $S_0$ (respectively $S_1$) the component of $\partial M_1$ where the orientation points in (respectively out) at every point of tangency. At times it will be useful to view $M_1$ as being naturally immersed in $M$.  \end{notation}

\begin{definition}
A \textbf{bad annulus} is a properly embedded annulus leaf $U$ of $\mathcal{F}_1$ with both boundary components lying on $S_1$ or both lying on $S_0$.
\label{defbadannulus}
\end{definition}

\begin{definition} Let $S\subset M$ be a fully marked surface with respect to the taut foliation $\mathcal{F}$.
A \textbf{bad solid torus} for $(\mathcal{F},S)$ is an embedded solid torus $B\subset M_1$ which is bounded by bad annuli $U_i$ together with annuli subsurfaces $A_j$ such that either 
\begin{enumerate}
\item $\forall j, A_j \subset S_1$, and the normal to the foliation $\mathcal{F}$ points out of $B$ along all $U_i$, or
\item $\forall j, A_j \subset S_0$, and the normal to $\mathcal{F}$ points into $B$ along all $U_i$. See Figure \ref{badsolidtorus}.
\end{enumerate}
\label{defbadsolidtorus}
\end{definition}

\begin{figure}
\labellist
\pinlabel $A_1$ at 45 170
\pinlabel $A_2$ at 105 170 
\pinlabel $A_3$ at 180 170
\pinlabel $U_1$ at 75 142
\pinlabel $U_3$ at 110 60
\pinlabel $U_2$ at 140 132
\pinlabel $B$ at 110 105
\endlabellist

\centering
\includegraphics[width=2.5 in]{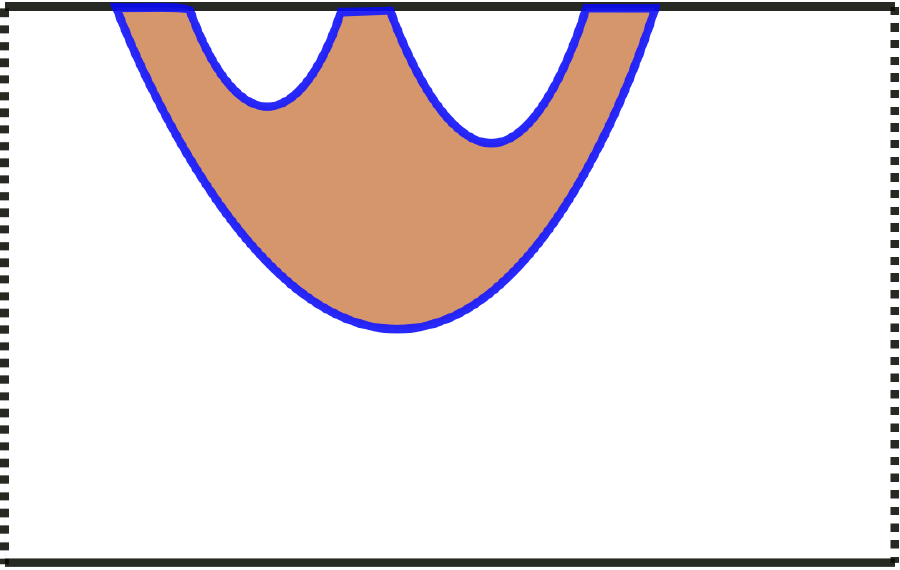}
\caption{A schematic picture for a bad solid torus. The horizontal lines represent $S_0$ and $S_1$.}
\label{badsolidtorus}
\end{figure}

\begin{prop}
Let $M$ be a closed hyperbolic 3-manifold, $\mathcal{F}$ be a taut foliation on $M$ and $S$ be a fully marked surface. One can do $I$-bundle replacement along some of the leaves to obtain $\mathcal{F}'$ and replace $S$ by a homologous fully marked surface $S'$ such that $(\mathcal{F}',S')$ has no bad solid torus and $\mathcal{F'}|S'$ has the compact-free separatrix property. \label{bad annulus}
\end{prop}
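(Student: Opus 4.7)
The plan is to eliminate bad solid tori one at a time via a homological surgery on $S$ combined with $I$-bundle replacements of $\mathcal{F}$, iterated until a monovariant forces termination. First I would apply Lemma \ref{noseparatrix} to install the compact-free separatrix property; by Remark \ref{reeb unchanged} this operation does not disrupt Reeb components of $\mathcal{F}|S$, and a direct check shows it neither creates nor destroys bad solid tori, so one may assume throughout that this property is preserved under subsequent moves. I would then assign to $(\mathcal{F}, S)$ the complexity equal to the number of bad annulus leaves of $\mathcal{F}_1$, which is finite by Haefliger's theorem applied to annular leaves of the compact 3-manifold $M_1$.

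If the complexity is positive, pick an innermost bad solid torus $B \subset M_1$ with $\partial B = (\bigcup_j A_j) \cup (\bigcup_i U_i)$; up to symmetry assume $A_j \subset S_1$ and the normal to $\mathcal{F}$ points out of $B$ along each $U_i$. The surgery is to replace $S$ by $S' = (S \setminus \bigcup_j A_j) \cup \bigcup_i U_i'$, where each $U_i'$ is the leaf annulus $U_i$ pushed a small amount to the side of $U_i$ opposite $B$ and smoothed across $\partial U_i$. Since $S - S' = \partial B$ in $H_2$, we have $[S'] = [S]$; since each $U_i'$ is $C^0$-close to a leaf, the tangencies of $\mathcal{F}|S'$ coincide with those of $\mathcal{F}|S$ outside the $A_j$, so $S'$ remains fully marked with saddles of the same sign. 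Any holonomy obstruction to the smoothing along $\partial U_i$ is defused by first performing an $I$-bundle replacement along each leaf of $\mathcal{F}$ that carries some $U_i$: inside the new product bundle we have room to realize the smoothing without introducing spurious tangencies. By Observation \ref{I-bundle-replacement} the resulting foliation $\mathcal{F}'$ is taut, and its plane field is homotopic to that of $\mathcal{F}$ through plane fields transverse to a common auxiliary line field.

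The main obstacle is the monovariant step: checking that the number of bad annulus leaves of $\mathcal{F}'_1 = \mathcal{F}'|_{M \setminus \setminus S'}$ is strictly less than that of $\mathcal{F}_1$. The $U_i$ are destroyed because their boundaries no longer lie on $S'$, and the bad annuli introduced by the $I$-bundle replacement are parallel copies of these $U_i$ and are excised by the same surgery. To rule out the creation of new bad annuli elsewhere, one uses hyperbolicity of $M$ via Corollary \ref{torus}: any fresh torus bounding a candidate new bad solid torus must bound a solid torus in $M$ (the core of any bad solid torus carries an essential curve), and an innermostness argument together with the choice of $B$ forces any such candidate to have been already enclosed by $B$ before the surgery, contradicting the assumption that $B$ was innermost. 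Once the strict monotonicity is confirmed, finiteness of the complexity gives termination in finitely many moves, yielding the desired $(\mathcal{F}', S')$ with no bad solid tori and with the compact-free separatrix property intact.
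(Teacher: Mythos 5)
Your overall strategy (surger $S$ across the bad solid torus, then iterate a decreasing monovariant) is the right shape, but the specific monovariant you chose does not work, and this is exactly the crux the paper has to grapple with.

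The quantity ``number of bad annulus leaves of $\mathcal{F}_1$'' is not finite in general. Haefliger's theorem gives you finitely many \emph{packets} of annulus leaves, but each packet can be a foliated $I$-bundle containing a Cantor set's worth of annulus leaves; so counting individual bad annuli gives an invariant that may be $\infty = \infty$ before and after the move. Even modulo this, the claim that the count strictly decreases is not established. After replacing $S$ by $S' = (S - \bigcup A_j) \cup \bigcup U_i'$, the portion of $M_1$ abutting the $A_j$ is no longer separated there, so leaves that formerly had a boundary component on some $A_j$ continue past it, potentially merging with other leaf pieces to produce annulus leaves with both boundary components on the same side of $S'$ (new bad annuli). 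Your appeal to ``innermostness plus Corollary \ref{torus}'' is not a proof: bad solid tori are not nested in a way that makes ``innermost'' well defined and stable under the surgery, and Corollary \ref{torus} only controls where the boundary torus can sit, not whether fresh bad annuli appear. The paper instead takes the bad solid torus to be \emph{maximal} (not extendable by a packet of strings along $U$), and this maximality is essential: it forces the germinal holonomy of $\partial U$ on the outer side to have no fixed points besides the origin, which is what lets one show that all compact leaves of $\mathcal{F}|S'$ are disjoint from the modified region $U'$. That control is what makes the paper's two-step complexity $(c_1, c_2)$ --- number of Reeb components on $S$, followed by the sum of heights of equivalence classes of maximal strings --- a genuine lexicographic monovariant, and each term is proved finite (Lemma \ref{bounded} and the finiteness of equivalence classes) rather than assumed. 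You would need to introduce an invariant of this refined, packet-level kind to make the termination argument go through.

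A smaller issue: the paper's surgery step does \emph{not} invoke an $I$-bundle replacement along the leaves carrying the $U_i$ to ``defuse a holonomy obstruction''. It simply pushes $S - \bigcup A_j + \bigcup U_i$ slightly off $B$; the pushed annulus $U'$ is not required to be a leaf, so there is nothing to smooth. The $I$-bundle replacements in the proposition are used only to restore the compact-free separatrix property via Lemma \ref{noseparatrix} after the surgery (and it is there that Remark \ref{reeb unchanged} is cited, specifically to handle the case where the Reeb-component count dropped). Your extra $I$-bundle move is not wrong per se, but it adds new bad annuli parallel to the $U_i$, which makes your (already problematic) count even harder to control.
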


\noindent By Lemma \ref{noseparatrix} we can assume that $\mathcal{F}|S$ has no arc connecting separatrices.  In what follows the reader should remember that this implies that $\mathcal{F}|S$ contains no topologically immersed circle that intersects a singularity.  Before embarking on the proof we give some preliminary definitions and lemmas.

\begin{lem}
Let $M$ be a compact orientable 3-manifold, $\mathcal{F}$ be a taut foliation on $M$ and $S$ be a fully marked surface. Let $T = A\cup B$ be a torus, where $A$ and $B$ are annuli, $A$ is in a leaf and $B$ is in S and $\mathcal{F}|S$ is a suspension. Then $T$ is a $\pi_1$-injective torus.
\label{incompressible torus}
\end{lem}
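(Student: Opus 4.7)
The plan is to prove the lemma by contradiction: assuming $T$ is compressible, I will extract a null-homotopic transverse loop contradicting the tautness of $\mathcal{F}$ via Novikov's theorem. Since $T$ is an embedded torus in an orientable $3$-manifold, $\pi_1$-injectivity is equivalent to incompressibility, so it suffices to rule out compressibility.

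First I would show that the common boundary curve $c_1 = \partial A \cap \partial B$ is essential in $M$. Since $\mathcal{F}|S$ is a suspension, the circle $c_1$ is a boundary component of the suspension annulus $B$ and hence a boundary leaf of the suspension, i.e.\ a fiber of the associated suspension bundle on the component $T_0$ of $S$ containing $B$ (which is therefore a torus). Fibers of such bundles are essential in the total space, so $c_1$ is essential in $T_0$. By Corollary \ref{incompressible}, the fully marked surface $S$ is $\pi_1$-injective in $M$, so $c_1$ is essential in $M$.

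Assuming for contradiction that $T$ is compressible, the loop theorem yields an embedded compressing disk $D \subset M$ with $\partial D = \gamma$ essential in $T$. Writing $\pi_1(T) = \mathbb{Z}\langle c_1 \rangle \oplus \mathbb{Z}\langle \mu \rangle$ with $\mu$ a curve meeting $c_1$ transversely once, the class of $\gamma$ is a primitive $(p,q)$; since $c_1$ is essential in $M$, $q \neq 0$. Hence $\gamma$ meets $c_1 \cup c_2$ transversely in $2|q|$ corners and decomposes into $|q|$ tangent arcs in $A \subset L$ alternating with $|q|$ transverse arcs in $B$.

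Finally I would place $D$ in Roussarie--Thurston general position, producing a singular foliation $\mathcal{F}|D$ on $D$ whose only interior singularities are saddles, which is tangent to $\partial D$ along the $A$-arcs (subsets of the leaf $L$) and transverse to $\partial D$ along the $B$-arcs with a crossing direction fixed by the suspension structure on $\mathcal{F}|B$. A Poincar\'e--Hopf index computation on $D$ with corners---reconciling the saddle contributions ($-1$ each) and the signed contributions of the $2|q|$ corners against $\chi(D) = 1$---should expose a local half-disk configuration near one of the tangent arcs of $\partial D$: a leaf $L'$ of $\mathcal{F}$ close to $L$ meets $D$ in a short arc parallel to such a tangent arc, which closes up through $D$ via a short transverse arc to yield a null-homotopic loop in $M$ transverse to $\mathcal{F}$. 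Novikov's theorem then produces a Reeb component of $\mathcal{F}$, contradicting tautness. The principal obstacle is this final step: the corner bookkeeping in the Poincar\'e--Hopf count and the rigorous extraction of the vanishing cycle. The suspension hypothesis on $\mathcal{F}|S$ is used essentially here to pin down the directions of the transverse arcs of $\gamma \cap B$ relative to $\mathcal{F}|B$, fixing the signs of the corner contributions and preventing cancellations with the saddle contributions that would otherwise evade the vanishing cycle.
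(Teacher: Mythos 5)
Your proposal takes a different route from the paper's, but the final step — the one you yourself flag as ``the principal obstacle'' — is a genuine gap, not a deferred technicality. Two problems. First, you invoke Roussarie--Thurston general position to claim that the compressing disk $D$ meets $\mathcal{F}$ only in interior saddle tangencies, but that conclusion requires the surface to be incompressible; $D$ is a disk and is emphatically not. A disk in Morse position with respect to a foliation will in general have center tangencies as well, and in fact a naive Poincar\'e--Hopf count on $D$ (with $\chi(D)=1$) forces a positive net contribution, so you cannot rule centers out and must account for them in the corner bookkeeping. Second, even granting a correct index identity, the passage from ``the indices do not balance'' to ``there is a null-homotopic transverse loop'' is exactly the nontrivial content of a vanishing-cycle argument, and as written it is a plan rather than a proof. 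In addition, the preliminary paragraph about $c_1$ is confused: $c_1$ is a \emph{closed leaf} of the suspension $\mathcal{F}|B$, not a fiber of the suspension bundle (fibers are the transversal arcs), and the component of $S$ containing $B$ need not be a torus; the correct and immediate reason $c_1$ is essential in $M$ is that Corollary~\ref{incompressible} says compact leaves of $\mathcal{F}|S$ are $\pi_1$-injective in $M$.

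The paper sidesteps all of this with one observation you did not use: transverse orientability forces the coorientation of $B$ (coming from $S$ being fully marked) to agree with the coorientation induced on the nearby leaves across $A$, so $T$ can be isotoped slightly off the leaf containing $A$ to be \emph{everywhere} transverse to $\mathcal{F}$, with the closed leaf $\alpha$ of $\mathcal{F}|S$ persisting as a leaf of the induced suspension foliation $\mathcal{F}|T$. After that, the dichotomy is clean: any nontrivial class in $\pi_1(T)$ is a power of $\alpha$ (essential in $M$ by Corollary~\ref{incompressible}) or is represented by a closed transversal to $\mathcal{F}$ (essential in $M$ by Novikov, since $\mathcal{F}$ is taut). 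No compressing disk, no index computation on a cornered disk, no extraction of a vanishing cycle is needed. If you want to salvage your approach, you would need to actually carry out the boundary/corner index count allowing for centers and then run a genuine vanishing-cycle/Novikov argument; but the isotopy trick makes all of that unnecessary.
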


\begin{proof}
The torus $T$ contains a smooth circle leaf $\alpha$ of $\mathcal{F}|S$. The curve $\alpha$ is $\pi_1$-injective in $M$, by Corollary \ref{incompressible}.  Since $\mathcal{F}$ is transversely oriented, the transverse orientation on $B$ agrees with the induced transverse orientation on the suspension $\mathcal{F}|A$.  Therefore, $T$ can then be isotoped slightly to be transverse to $\mathcal{F}$ and have $\alpha$ as a leaf of the induced foliation which is a suspension. Every nontrivial element of $\pi_1(T)$ is represented by either a multiple of $\alpha$ or a curve transverse to $\mathcal{F}$. The latter curves are essential in $M$ by Novikov's theorem \cite{novikov1965topology}.  We conclude that $T$ is $\pi_1$-injective in $M$.
\end{proof}

\begin{definition}\ \vspace{1mm}
\begin{itemize}
\item A \textbf{string} is an annulus $S^1 \times [0,1]$ lying inside a leaf of $\mathcal{F}$ such that $[0,1]$ factor can be decomposed as $0=t_0 < t_1 < \cdots < t_k=1 $ where each $S^1 \times [t_i , t_{i+1}]$ intersects $ S$ exactly along its boundary circles.  The \textbf{height} of the string is equal to $k$, and $S^1 \times [t_i,t_{i+1}]$ are called the \textbf{pieces of the string}. 
\item A \textbf{maximal string} is a string that can not be extended to a string of larger height.
\item A \textbf{packet of strings} is an embedding $i: A\times I \longrightarrow M$, where $A = S^1 \times [0,1]$ is an annulus and $[0,1]$ factor can be decomposed as $0 = t_0< t_1 < \cdots < t_k =1$ where the image of each \textbf{piece} $S^1 \times [t_i, t_{i+1}] \times I$ is a packet of leaves in $M_1$ with end leaves $i \big( S^1 \times [t_i, t_{i+1}] \times \partial I \big)$, and intersecting $\partial M_1$ exactly along $i \big( S^1 \times \{ t_i , t_{i+1} \} \times I \big)$. The number $k$ is called the \textbf{height} of the packet. A single string is also considered as a special case of a packet of strings.
\item Define the \textbf{tangential boundary} of a packet of strings as the restriction of $i$ to $ A\times \partial I$, and the \textbf{transverse boundary} as the restriction of $i$ to $\partial A \times I$. 
\end{itemize}
\end{definition}

\begin{remark} A packet of strings intersects $S$ in annuli that are foliated as suspensions.  By abuse of notation, we identify a string with its image in $M$. \end{remark}

\begin{remark}
Let $M$ be a closed hyperbolic 3-manifold, $\mathcal{F}$ be a taut foliation on $M$, and $S$ be a fully marked surface. Then the condition that the transverse boundaries (and hence also tangential boundaries) of a packet of strings are disjoint is automatic by Lemmas \ref{incompressible torus} and \ref{torus}, since $M$ does not contain any incompressible torus. In other words, we only need to assume that the restriction of $i$ to the interior of $A \times I$ is injective.
\end{remark}


\begin{lem}
If $S\subset M$ is a fully marked surface with respect to the taut foliation $\mathcal{F}$ in the closed atoroidal 3-manifold $M$, then the height $k$ of any string $s$ is uniformly bounded by some $K < \infty$.  \label{bounded}
\end{lem}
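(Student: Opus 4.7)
The plan is to argue by contradiction: assuming a string $s$ lying in a leaf $L$ of $\mathcal{F}$ has arbitrarily large height $k$, I will produce an embedded $\pi_1$-injective torus in $M$, which is impossible since $M$ is closed and atoroidal. By definition the string has $k+1$ pairwise disjoint circles $c_0, c_1, \ldots, c_k \subset S \cap L$, each a compact leaf of $\mathcal{F}|S$ that is $\pi_1$-injective in $S$ (and in $M$) by Corollary~\ref{incompressible}. Consecutive circles $c_i, c_{i+1}$ are cobounded in $L$ by the string piece $A_i$, so the $c_l$ are pairwise isotopic in $L$, and hence (by $\pi_1$-injectivity of $S$) in $S$ as well.

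The central claim is that no two of the $c_l$ can lie in a common annular packet of compact circle-leaves of $\mathcal{F}|S$. Suppose for contradiction that $c_a, c_b \in P$ for such a packet $P$. Among the finitely many $c_l$ lying in $P$, select a $P$-consecutive pair $(c_{a'}, c_{b'})$, i.e.\ with no other $c_l$ (for $a' < l < b'$) lying in $P$. The sub-annulus $B \subset P$ with $\partial B = c_{a'} \cup c_{b'}$ is foliated by circle-leaves of $\mathcal{F}|S$, so $\mathcal{F}|B$ is a suspension. The concatenation $A^* := A_{a'} \cup A_{a'+1} \cup \cdots \cup A_{b'-1}$ is an embedded annulus in $L$ with $\partial A^* = c_{a'} \cup c_{b'}$. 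By the string property, $A^* \cap S = \{c_l : a' \leq l \leq b'\}$, and by our choice the intermediate $c_l$ are not in $P$ and hence not in $B$. Thus $A^* \cap B = c_{a'} \cup c_{b'}$, so $T := A^* \cup B$ is an embedded torus in $M$. Lemma~\ref{incompressible torus} then implies that $T$ is $\pi_1$-injective, contradicting the atoroidality of $M$.

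Having established the claim, the $k+1$ circles $c_0, \ldots, c_k$ lie in $k+1$ mutually distinct packets of compact leaves of $\mathcal{F}|S$. By the Poincar\'e--Hopf formula, $\mathcal{F}|S$ has precisely $|\chi(S)|$ saddle singularities and no others (circle tangencies are ruled out by tautness of $\mathcal{F}$). A standard finiteness argument in the qualitative theory of codimension-one foliations of compact surfaces---applying Haefliger's theorem to the regular part of $\mathcal{F}|S$ on the complement of small disk neighborhoods of the saddles---bounds the number of such packets by a constant $K = K(\mathcal{F}, S)$. Hence $k + 1 \leq K$, giving the desired uniform bound.

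The principal technical obstacle I expect is the final finiteness step. While intuitively clear, ruling out pathological accumulation of compact-leaf packets in the $C^{\infty,0}$ setting requires combining Haefliger-type finiteness with the compact-free separatrix property guaranteed by Lemma~\ref{noseparatrix}; the atoroidality of $M$, by contrast, enters only in the central claim, through Lemma~\ref{incompressible torus}.
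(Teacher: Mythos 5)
Your overall strategy matches the paper's: assume the height is large, use a pigeonhole argument to find two of the objects under consideration in a common packet, and then piece together an annulus in a leaf $L$ with an annulus in $S$ carrying a suspension to get a closed surface $T$ whose $\pi_1$-injectivity (via Lemma~\ref{incompressible torus}) contradicts atoroidality. The implementation, however, differs in a way that matters. The paper pigeonholes on the \emph{pieces} of the string, viewed as compact annular leaves of the three-dimensional foliation $\mathcal{F}_1$ on $M_1$, and applies Haefliger's theorem directly to $\mathcal{F}_1$ to get finitely many packets. You instead pigeonhole on the boundary circles $c_l$, viewed as compact circle leaves of the singular two-dimensional foliation $\mathcal{F}|S$, and need finiteness of packets for that singular surface foliation. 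This is the weaker link: you acknowledge it as the ``principal technical obstacle,'' and it is a genuine gap. In particular, your packets are defined as annuli consisting entirely of compact circle leaves; with that narrow definition, finiteness can fail even for nonsingular surface foliations (compact leaves can sit over a Cantor set in the transverse direction, so the connected components of the union of compact leaves need not be finite in number). The paper's notion of packet --- an injectively immersed $K\times[0,1]$ with a foliation transverse to the interval factor, whose interior leaves need not be compact --- is what makes Haefliger-type finiteness true, and your torus-building step still works with that broader notion since Lemma~\ref{incompressible torus} only requires $\mathcal{F}|B$ to be a suspension, not a product. You should adopt that definition, and even then, applying Haefliger to the singular $\mathcal{F}|S$ by excising disks around saddles (where the induced foliation on the new boundary is neither everywhere transverse nor everywhere tangent) requires more care than the paper's direct application to $\mathcal{F}_1$.

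A second, smaller gap: you assert $T := A^* \cup B$ is a torus, but a priori the union of two annuli glued along both pairs of boundary circles could also be a Klein bottle. The paper notes this and handles the Klein bottle case separately (the boundary of a regular neighborhood would be a torus bounding a solid torus on the outside, yielding a reducible or Seifert fibered manifold); your writeup should address it, or argue from the orientations of $L$ and $S$ that $T$ is two-sided.

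Minor remarks: you do not actually need the observation that the $c_l$ are isotopic in $S$, and the appeal to Lemma~\ref{noseparatrix} in your last sentence is misplaced --- that hypothesis is not available in the statement of Lemma~\ref{bounded} as written (it is only arranged for later, in the proof of Proposition~\ref{bad annulus}).
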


\begin{proof}  By Haefliger's theorem, there are finitely many packets $P_1 , \cdots, P_m$ of leaves of $\mathcal{F}_1$ in $M_1$ which contain all the annuli leaves of $\mathcal{F}_1$.  If $\text{Height}(s)>m$, then $s$ passes through some packet, say $P_1$, in at least two of its pieces.  Thus there exists an annulus $A$ in the transverse boundary of $P_1$ and an annulus $B\subset s$ whose union is an embedded torus or Klein bottle $T\subset M$.  

If $T$ is a Klein bottle, then the boundary of a regular neighborhood $N(T)$ bounds a solid torus to the outside and hence $M$ is either reducible or a Seifert fibered space.  Now suppose that $T$ is a torus.  By lemma \ref{incompressible torus}, $T$ is $\pi_1$-injective in $M$.  

\end{proof}



\begin{definition} Two maximal strings $s_0$ and $s_1$ are \textbf{equivalent} if there exists a packet of strings $P$ whose tangential boundary is $s_0 \cup s_1$, and moreover any string in $P$ with boundary lying on the transverse boundary of $P$ is also maximal. \label{equivalence-maximal} \end{definition}

\begin{lem} The number of  equivalence classes of maximal strings is finite.\end{lem}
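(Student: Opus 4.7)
The plan is to assign to each maximal string a combinatorial invariant --- its \emph{itinerary} through the finitely many packets of annular leaves of $\mathcal{F}_1$ in $M_1$ --- and then to show that this invariant is preserved under equivalence and that only finitely many equivalence classes share any given itinerary.

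By Lemma~\ref{bounded} every string has height at most some $K<\infty$, so a maximal string $s$ consists of at most $K$ pieces $s_1,\dots,s_k$, each an annulus leaf of $\mathcal{F}_1$. By Haefliger's theorem applied to $\mathcal{F}_1$ on $M_1$, the annulus leaves of $\mathcal{F}_1$ lie in finitely many packets $P_1,\dots,P_m$, each an immersed product $A\times I$. Each $s_i$ is contained in a unique $P_{j(i)}$, and its two boundary circles lie on the two side annuli of $P_{j(i)}$ in $\partial M_1$. Define the itinerary
\[
\mathcal{I}(s) := \bigl((j(i),\text{sides})\bigr)_{i=1}^{k},
\]
which takes values in a finite set of size at most $(4m)^{K}$. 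Invariance under equivalence is almost formal: if $s\sim s'$ via a packet of strings $P$, then each piece of $P$ is itself an immersed packet of annulus leaves, hence contained in exactly one $P_j$ by Haefliger, and the two tangential-boundary slices $s$ and $s'$ therefore thread through the same sequence of packets with the same side data.

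For the converse I would show that the set of maximal strings with a fixed itinerary is, up to a finite refinement, a 1-dimensional variety inside $\prod_{i}I_{j(i)}$ cut out by the matching conditions along $S$. Concretely, inside $P_{j(i)}\cong A_{j(i)}\times I_{j(i)}$ each piece $s_i$ is the annulus leaf at some parameter $\tau_i$, and consecutive parameters $\tau_i,\tau_{i+1}$ are constrained by the requirement that the exit circle of $s_i$ coincide with the entry circle of $s_{i+1}$ on $S$. Given two maximal strings $s,s'$ in the same path-component of this variety, I would build an interpolating packet of strings by taking, inside each $P_{j(i)}$, the sub-packet $A_{j(i)}\times[\tau_i,\tau'_i]$ and gluing consecutive sub-packets along the common band of $\mathcal{F}|S$ circles on $S$; the side data in $\mathcal{I}$ guarantees consistency of the gluings. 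Hyperbolicity of $M$, via Lemma~\ref{incompressible torus} and Corollary~\ref{torus}, rules out self-intersections: any self-intersection would force a torus that must bound a solid torus, which the packet structure of the $P_j$ excludes. Since the variety is a compact 1-complex it has only finitely many path-components, giving finitely many equivalence classes per itinerary, and hence finitely many in total.

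The main obstacle is verifying that the intermediate level slices of the interpolation are themselves maximal strings and that the interpolation is genuinely embedded. Maximality of intermediate slices can fail at an isolated level where the exit circle of some piece crosses into the side annulus of a previously unused packet $P_{j'}$, so that the string could, in principle, be extended there. I expect to handle this by observing, via Lemma~\ref{bounded} and the finiteness of Haefliger packets, that such branching levels occur at only finitely many points along the 1-dimensional variety, and then subdividing the interpolation at these points into finitely many maximal sub-interpolations, each a genuine packet of strings in its own right.
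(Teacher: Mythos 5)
Your proposal takes a genuinely different route from the paper.  The paper avoids describing the structure of the space of maximal strings with a given combinatorial type: it picks $t=(2K_0+1)^{2K}$ pairwise inequivalent maximal strings and applies the pigeonhole principle repeatedly, tracking at each step whether the string ends or continues into which packet, so that eventually some pair of strings has the ``same fate'' at every step, which directly produces an interpolating packet whose intermediate slices are maximal (they end in the same band of a side annulus, so no extension is available).  Your scheme isolates a finite invariant --- the itinerary --- and then tries to bound the number of equivalence classes sharing it by exhibiting them as path-components of a parameter space.

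The gap lies in the last step, and it is precisely the one you flag as the ``main obstacle.''  Your claim that maximality can only fail ``at an isolated level'' is incorrect: the locus of non-maximal parameters in your variety is the preimage of a union of side annuli of the Haefliger packets, and two side annuli can overlap on a whole sub-annulus of $S$.  So the non-maximal locus is in general a finite union of closed sub\emph{intervals}, not a finite set of points, and the proposed fix of subdividing at isolated branching points does not apply.  The conclusion is actually salvageable --- the extendable locus is a \emph{finite} union of closed intervals because there are finitely many side annuli, so the maximal locus is a finite union of open intervals, each yielding one equivalence class --- but your argument does not supply this; Lemma~\ref{bounded} and finiteness of packets give neither the closedness of the extendable locus nor the bound on the number of its components.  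Two smaller issues: (i) that the parameter set is a compact $1$-complex rather than, say, a Cantor set is asserted but not justified (it follows from the matching condition being a monotone homeomorphism between compact sub-intervals of the $I_j$, which you should establish), and (ii) the embeddedness of the interpolating packet is not handled: if the itinerary revisits the same $P_j$ with overlapping parameter intervals $[\tau_i,\tau'_i]$ and $[\tau_{i'},\tau'_{i'}]$, the sub-packets intersect, and the appeal to Lemma~\ref{incompressible torus} and Corollary~\ref{torus} as stated does not rule this out; the paper's remark covers disjointness of the transverse boundaries once you already have an embedding of the interior of $A\times I$, which is exactly what is at stake here.
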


\begin{proof}The proof is by successive use of the pigeonhole principle. By Haefliger's theorem there are finitely many, say $K_0$, packets that include all the annuli leaves of $\mathcal{F}_1$. We call these 1-packets.  One can more or less construct a branched surface by gluing the 1-packets together in the obvious manner. The branch loci are circles that lie in $S$, and at most $K_0$ sheets can branch from such a circle in either direction. Let $K$ be as in Lemma \ref{bounded}. We show that the number of equivalence classes of maximal strings is less than $(2K_0+1)^{2K}$. 

To prove this, assume the contrary; that there are $t = (2K_0+1)^{2K}$ maximal strings, no two of which are in the same equivalence class. Call the mentioned maximal strings $s_1, s_2, \cdots , s_t$. Since there are 
\[  K_0 < 2K_0+1 \] 
1-packets, by the pigeonhole principle there are at least $(2K_0+1)^{2K-1}$ distinct strings $s_i$ that pass through the same 1-packet $P$. Fix an orientation for the $[0,1]$ direction of $P$. Let $C$ be the bottom transverse boundary of $P$ with respect to the chosen orientation. Heading `downward', a string can either end at $C$ or continue into one of the 1-packets. There are at most $2K_0 +1$ possibilities as Figure \ref{finite string} shows. 

\begin{figure}
\labellist
\pinlabel $P$ at 150 180
\pinlabel {$ \text{At most $K_0$} $} at 450 180
\pinlabel {$ \text{possibilities for continuing} $} at 450 155
\pinlabel {$ \text{At most $K_0+1$} $} at -80 80
\pinlabel {$ \text{possibilities for stopping} $} at -80 55
\endlabellist

\centering
\includegraphics[width=2 in]{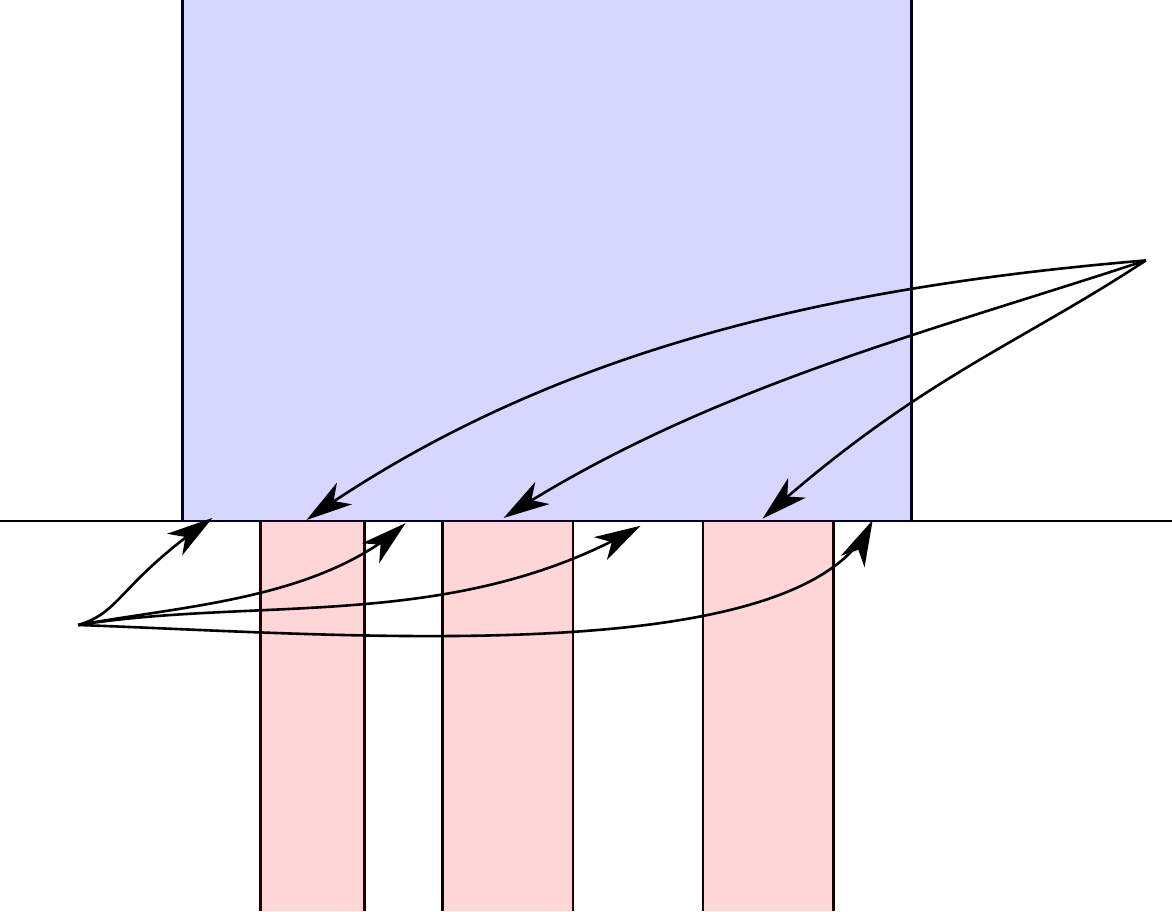}
\caption{The string can either end at $C$ or continue into the 1-packets.}
\label{finite string}
\end{figure}

Here $K_0$ possibilities are counted for continuing into a 1-packet, and $K_0+1$ possibilities are considered for ending at $C$. Therefore at least $(2K_0+1)^{2K-2}$ of the strings $s_i$ have the same fate. If they end at $C$, we start heading upward at $P$ and repeat the same argument. In any case we can only go downward a maximum of $K-1$ steps and similarly for upward, since the length of any string is at most $K$. Note that at this point, still at least $2K_0+1$ strings $s_i$ had the same fate all along, and they have ended on both sides (up and down) together. But this means that they are in the same equivalence class. We came to a contradiction, so the number of equivalence classes is at most $(2K_0+1)^{2K}$.
\end{proof}

\begin{proof}[Proof of Proposition \ref{bad annulus}:]
Define the following complexity function for the pair $(\mathcal{F},S)$:
\[ C = (c_1,c_2), \]
where $c_1$ is the number of Reeb components on $S$, and $c_2$ is sum of the heights of the equivalence classes of maximal strings. Equip $C$ with the lexicographic order, that is
\[ (c_1,c_2) \leq (d_1,d_2) \iff c_1 < d_1 \hspace{4mm} \Text{or} \hspace{4mm} (c_1 =d_1 \hspace{1mm},\hspace{1mm} c_2 < d_2). \]
We show that if there exists a bad solid torus then one can replace $S$ with a new surface $S'$, homologous to $S$, such that the complexity function $C$ for the pair $(\mathcal{F},S')$ is less than the corresponding one for $(\mathcal{F},S)$. Recall that the boundary of a bad solid torus $B$ consists of $m$ bad annuli $U_i$ together with $m$ annuli subsurfaces $A_i$ of $S_1$ (or $S_0$). We may assume that $A_i \subset S_1$, as the other case is similar. \\

First we outline the argument for the case $m=1$. Therefore, $\partial B = U \cup A$. 
\begin{figure}
\labellist
\pinlabel $A$ at 200 100
\pinlabel $U$ at 140 -20
\endlabellist

\centering
\includegraphics[width=2 in]{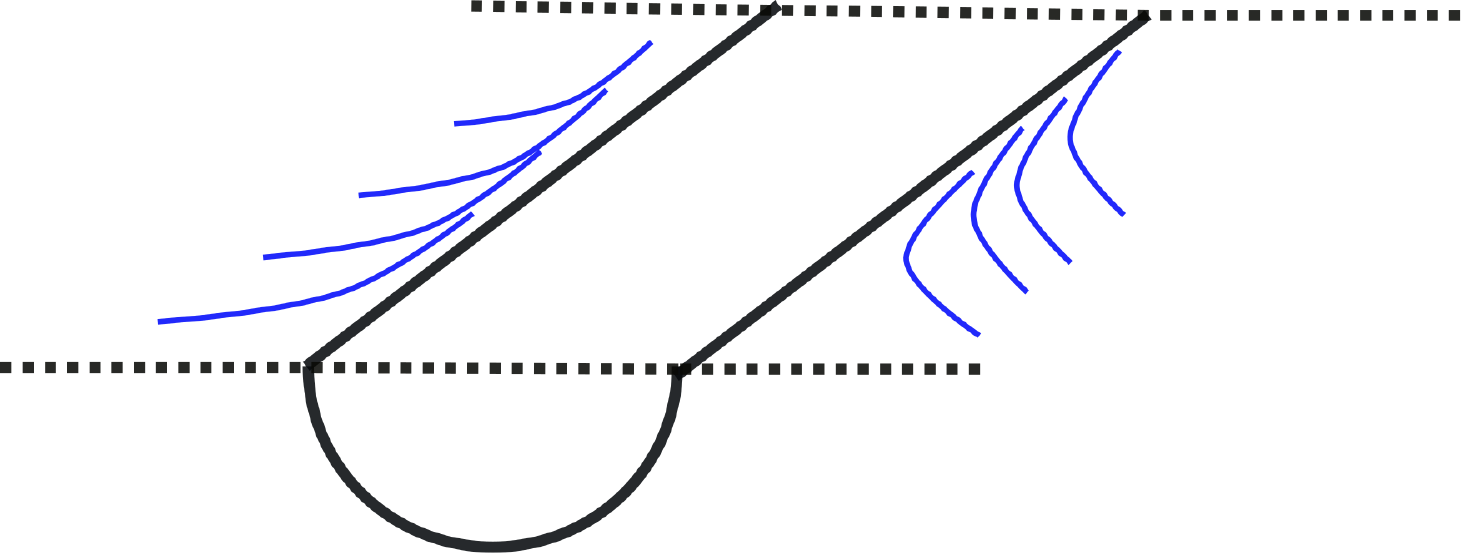}
\caption{A maximal bad solid torus with $m=1$.}
\label{separatingannulus}
\end{figure}

\begin{figure}
\labellist
\pinlabel $U'$ at 80 50
\endlabellist

\centering
\includegraphics[width=1.5 in]{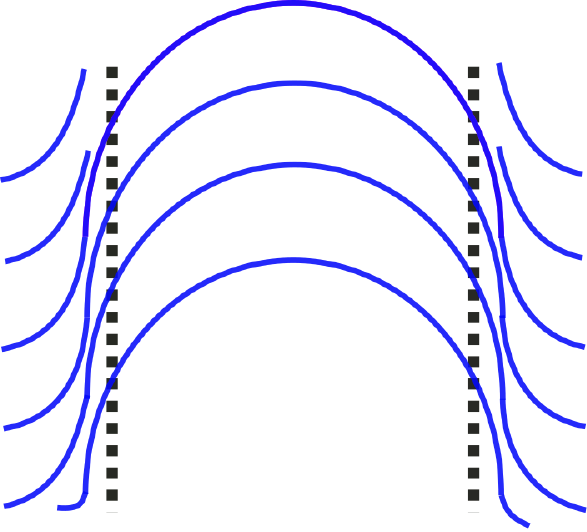}
\caption{Induced foliation on $U'$ (the area between dashed lines)}
\label{inducedonu}
\end{figure}

\noindent Assume that $B$ is maximal in the sense that it can not be extended further along $U$ by adding a packet of strings (Figure \ref{separatingannulus}). This has the following consequence: 

Let $\partial U = b_1 \cup b_2$. Since $b_1$ and $b_2$ are freely homotopic in $U$ they have the same germinal holonomy. Since $B$ is maximal, the germinal holonomy of $b_i$ on the side not contained in $A$ can not have fixed points except for the origin. See Figure \ref{separatingannulus}.\\

Replace $S$ with $S-A+U$ and push it slightly out of $B$ to make it disjoint from $U$. Call this new surface $S'$. The surface $S'$ is homologous to $S$, since $U-A$ bounds the solid torus $B$. We show that the complexity function for the pair $(\mathcal{F},S')$ is less than the one for $(\mathcal{F},S)$. \\

First we examine what happens to the number of Reeb components. Note that there is at least one Reeb component on $A$, since the normal vector to $\mathcal{F}$ points out of $B$ along $U$. After replacing $S$ with $S'$ all the Reeb components on $A$ disappear. We show that at most one new Reeb component can be created, and therefore $c_1$ is non-increasing. 

Let $U'$ be the portion of $S'$ obtained from $U$ after pushing out. The induced foliation on $U'$ is as in Figure \ref{inducedonu}. Let $R$ be a Reeb component of $S'$. If $R \cap U' = \emptyset$ then $R$ is a Reeb component of $S$ as well. If $R \cap U' \neq \emptyset$ then no leaf in $U'$ can  be contained in a boundary leaf of $R$, since every leaf of $U'$ has a closed transversal. Therefore all leaves of $U'$ should be part of the interior leaves of $R$. The only ways that $U'$ can be completed to a Reeb component are as in Figure \ref{twopossible}. In the first (respectively second) scenario, there are closed leaves $b_1' , b_2' \subset S-A$ parallel to $b_1$ and $b_2$ respectively such that the induced foliation on the annulus cobounding $b_i$ and $b_i'$ is a suspension of a shift homeomorphism (two-dimensional Reeb component) for $i=1,2$. In the second scenario $c_1$ decreases by at least two. We now show that if $c_1$ remains constant, then $c_2$ decreases.  The crucial point is that \emph{all} compact leaves of $\mathcal{F}|S'$ are disjoint from $U'$. \\

\begin{figure}
\labellist
\pinlabel $b_1$ at 55 -10
\pinlabel $b_2$ at 175 -10 
\pinlabel $b_2'$ at 230 -10
\pinlabel $b_1'$ at 0 -10
\pinlabel $U$ at 115 30
\endlabellist

\centering
\includegraphics[width=3 in]{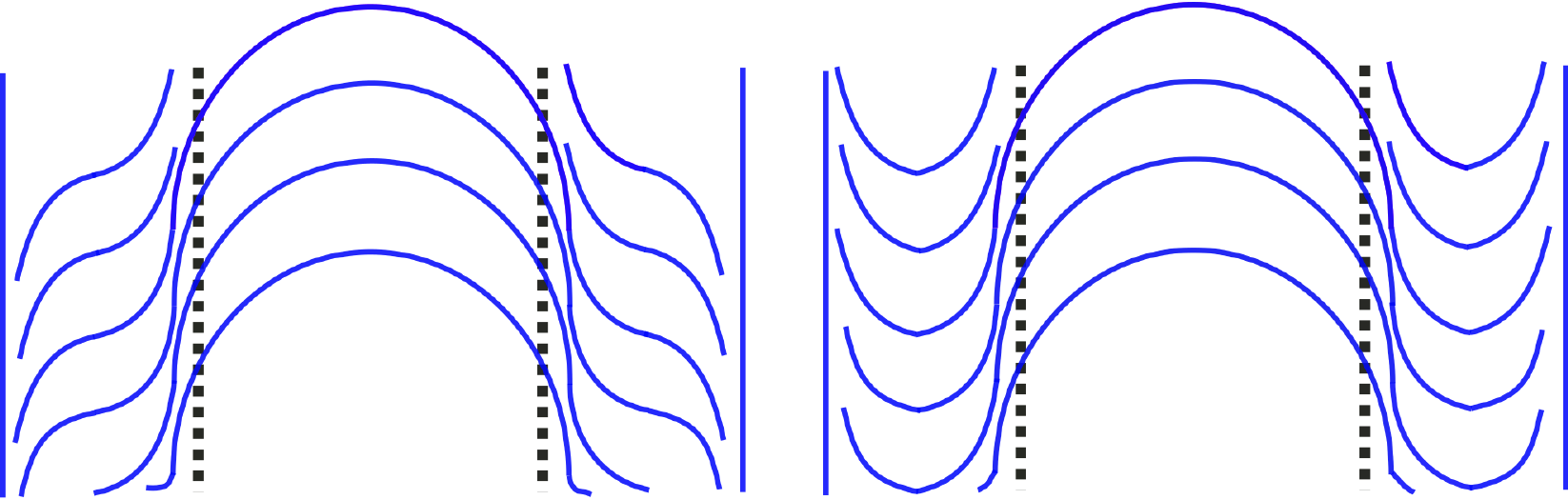}
\caption{Two scenarios for the induced foliation on a neighborhood of $U'$}
\label{twopossible}
\end{figure}
We now define a map 
\[ j:\{ \text{maximal strings for }(\mathcal{F},S) \} / \sim \hspace{2mm} \longrightarrow \hspace{2mm} \{ \text{maximal strings for } (\mathcal{F},S') \} / \sim', \]
where the equivalence relations $\sim$ and $\sim'$ are defined as in Definition \ref{equivalence-maximal} corresponding to the pairs $(\mathcal{F},S)$ and $(\mathcal{F}, S')$ respectively. We prove the following properties of $j$:
\begin{enumerate}
\item $j$ is well-defined.
\item $\text{Height}(j(s)) \leq \text{Height}(s)$. Moreover, there exists at least one string $s$ such that the inequality is strict.
\item $j$ is surjective.
\end{enumerate}
Recall that $c_2$ is defined as sum of the heights of the equivalence classes of maximal strings. Once proven, (1), (2) and (3) together imply that $c_2$ strictly decreases. Note that no injectivity assumption is needed to make the conclusion.\\

First we show that (1) and (2) hold. Let $s$ be a maximal string for $S$. If $s \cap A = \emptyset$ then define $j(s):=s$. Note that there is at least one maximal string $s$ for $S$ such that $s \cap A \neq \emptyset$ (there should be some maximal string including $U$ for example). If $s \cap A \neq \emptyset $ then define $j(s)$ as follows. 

Assume $\Text{Height}(s) = k$ and $0=t_0 < t_1 < \cdots < t_k=1$ be the interval decomposition for $s$; therefore $s$ has $k$ pieces corresponding to the subintervals $[t_i,t_{i+1}]$ for $0 \leq i \leq k-1$. If $s$ hits the annulus $A$ in the first (last) consecutive $r$ moments $t_0 , \cdots , t_{r-1}$ then delete the pieces of $s$ corresponding to the first (last) $r$ subintervals. After doing this, the beginning and the end point of the string lie on $S-A$. Next, if the string hits $A$ at moment $t_j$, remove $t_j$ from the list. This has the effect of joining some of the pieces of the string together. Define $j(s)$ as the new string. 

The string $j(s)$ is maximal, otherwise the string $s$ would not have been maximal either. Note that $\Text{Height}(j(s)) \leq \Text{Height}(s)$ is immediate, and the inequality is strict for at least one maximal string $s$ (for example the maximal string containing $U$). It is also possible that $j(s)$ is the empty string. This finishes the definition of $j$, but we still need to check that the definition does not depend on the choice of representative for the equivalence class.

Suppose that $s_1$ and $s_2$ are equivalent maximal strings for $S$. We need to show that $j(s_1)$ and $j(s_2)$ are equivalent maximal strings for $S'$. Let $J$ be a packet of strings having $s_1 \cup s_1$ as its tangential boundary. Crucially, each component of $J \cap S$ lies completely in $A$ or completely in $S-A$. To see this, let $0=t_0 < t_1 < \cdots < t_k=1$ be the interval decomposition of $J$ with $\text{Height}(J)=k$. There are two cases to consider.  

First, let $0< i < k$. Then the restriction of $J$ to $t= t_i$ can not have non-empty intersections with both $A$ and $S-A$. Otherwise the bad solid torus $B$ would not have been maximal as it could be extended further along $U$ by adding some part of $J$ to it. 

Secondly, The restriction of $J$ to $t= t_0$ (respectively $t=t_k$) can not have non-empty intersections with both $A$ and $S-A$. Otherwise either $B$ would not have been maximal or one of the intermediate strings in $J$ would not have been maximal (could be extended to a longer string by adding $U$). 

This establishes the claim that each component of  $J \cap S$ lies completely in $A$ or completely in $S-A$. Therefore when we look at the pieces of $s_1$ and $s_2$, they go through the same process for defining their image under $j$. That is $s_1$ intersects $A$ in the first consecutive $r$ moments if and only if $s_2$ hits $A$ in the first consecutive $r$ moments, and so on. Furthermore, we can do exactly the same process for the packet $J$ to obtain a packet $J'$ whose tangential boundary is $j(s_1) \cup j(s_2)$. Every intermediate string in $J'$ is maximal, since the same was true for $J$. This shows that $j(s_1)$ and $j(s_2)$ are equivalent, hence $j$ is well-defined.

Now we certify that (3) holds. Let $s'$ be a maximal string for $S'$. Recall that none of the leaves inside $U'$ are part of a closed trajectory. Hence, the boundary of each piece of $s'$ should lie on $S'-U'$, which is the same as $S-A$. Extend $s'$ to a maximal string for $S$ by adding annuli pieces to the beginning and end of it, and also subdividing the pieces if it intersects $A$. Let $s$ be the string obtained this way. Then $s$ is a maximal string for $S$ and $j(s)=s'$. Hence $j$ is surjective.

This completes the proof of (1), (2) and (3), which together imply that $c_2$ decreases when $m=1$.\\

Now consider the general case that there exists a bad solid torus $B$. Let 
\[ \partial B = \bigcup\limits_{i=1}^{m} A_i \cup \bigcup\limits_{i=1}^{m}U_i , \]
where $U_i$ is a bad annulus and $A_i$ is an annulus subsurface of $S_1$. We can assume that $B$ is maximal (can not be extended along any of $U_i$). Each $A_i$ contains at least one Reeb component since the normal vector to $\mathcal{F}$ points out of $B$ along all $U_i$. Let $\hat{S}$ be the surface obtained by pushing
\[ S - \bigcup\limits_{i=1}^{m} A_i + \bigcup\limits_{i=1}^{m} U_i  \]
slightly out of $B$.

Arguing as before if the number of Reeb components on $\hat S$ equals that of $S$, then to each $b_i$ component of $\partial A_i$, there exists a leaf $b'_i$ of $\mathcal{F}|S$ such that $b_i, b'_i$ bound an annulus $C_i$ whose interior is disjoint from $\cup A_i$ and where the induced foliation on $C_i$ is a suspension.  In addition, if $b'_i\in\partial A_j$ for some $j$, then the number of Reeb components on $\hat S$ is also reduced.  Thus the analogue of the first scenario holds for each component of $\partial A_i$. Let $S'$ be the result of deleting the  torus components of $\hat S$.  Since $M$ is atoroidal, all such torus components are homologically trivial, and hence $S'$ is homologous to $\hat S$.  Finally, if $\hat S$ has the same number of Reeb components as $S$, and $\hat S \neq S'$, then $S'$ has fewer Reeb components.   

Now assume that $S'$ has the same number of Reeb components as $S$.  We now repeat the argument for the $m=1$ case to conclude that $c_2$ is reduced.  Again the crucial observation is that \emph{all} compact leaves avoid the modified part of $S'$.\\

To complete the induction step we need to show that, after possibly doing $I$-bundle replacement the conclusion of Lemma \ref{noseparatrix} holds and the complexity has been reduced. 

If the number of Reeb components on $S$ equals that of $S'$, then Lemma \ref{noseparatrix} still holds  since any leaf through the modified part of $S'$ is non-compact.  Actually, by construction, no leaf through the modified part of $S'$ hits a singularity. Therefore, in this case $c_1$ is constant and $c_2$ is strictly reduced. 

If the number of Reeb components drops, then the conclusion of Lemma \ref{noseparatrix} may fail to hold. Before addressing this we observe that the Reeb components of $S'$ are all disjoint from the singularities.  To see this observe that if there is a compact arc $\alpha$ lying in a leaf with both endpoints on singularities, then $\alpha$ passes through the modified part of $S'$ and hence there is a closed transversal through $\alpha$.  It follows that $\alpha$ is disjoint from the Reeb components.  Therefore by Remark \ref{reeb unchanged}, after applying Lemma \ref{noseparatrix}, $S'$ has the same number of Reeb components. In this case, $c_1$ is strictly reduced, and hence the pair $(c_1 , c_2)$ equipped with the lexicographic ordering is strictly reduced, no matter if $c_2$ is increased or not.
\end{proof}

\section{A complete system of coherent transversals}

\begin{definition}
The \textbf{positive orientation} on a fully marked surface $S$, without torus components, is the orientation such that at each point of tangency on $S$, the normal orientations to $S$ and the transverse orientation to the foliation agree. A transverse arc $\gamma$ for $S$ is \textbf{positive} at an intersection point $p \in \gamma \cap S$ if the orientation of $\gamma$ and the positive orientation of $S$ are compatible at $p$.
\end{definition}

The following is the main result of this section. 
\begin{prop} \label{transversals}  Let $\mathcal{F}$ be a taut foliation on the closed hyperbolic 3-manifold $M$, $S$ a positively oriented fully marked surface, $M_1$ the closed complement of $M\setminus S$ and $\mathcal{F}_1$ the induced foliation on $M_1$. Assume that $\mathcal{F}|S$ has the compact-free separatrix property. Then $(\mathcal{F},S)$ has no bad solid tori if and only if there exist finitely many positive closed transversals $\gamma_1, \cdots, \gamma_n$ to $\mathcal{F}$ such that if $\gamma=\cup\gamma_i $ then,\\

i) $\gamma$ intersects every leaf of $\mathcal{F}_1$, and 

ii) every intersection of $\gamma$ with $S$ is transverse and positive.\end{prop}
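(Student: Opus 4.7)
The plan is to prove the two directions of the biconditional separately. For the direction that existence of the transversal system implies the absence of bad solid tori, I would argue by contrapositive: suppose $B\subset M_1$ is a bad solid torus, without loss of generality of type (1), so $A_j\subset S_1$ and the transverse orientation of $\mathcal{F}$ points out of $B$ along each $U_i$. Since the positive orientation of $S$ is defined to agree with the transverse orientation of $\mathcal{F}$ at every tangency, and $S_1$ is by definition the component of $\partial M_1$ along which that orientation points out of $M_1$, the positive normal of $S$ extends continuously over $A_j$ and still points out of $B$ there. Hence the transverse orientation of $\mathcal{F}$ points out of $B$ along every part of $\partial B$, so any positive closed transversal $\gamma_i$ can only exit $B$ at a crossing of $\partial B$. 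Being a closed loop, the numbers of entries and exits must agree, so both vanish and $\gamma_i\cap\partial B=\emptyset$. In particular $\gamma$ is disjoint from each bad annulus leaf $U_i$, which is a leaf of $\mathcal{F}_1$, contradicting hypothesis (i).

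For the converse direction, my plan is to exploit the positive and negative transverse flows $\phi_t^{\pm}$ of a line field $\mathcal{V}$ transverse to $\mathcal{F}$. The central claim to establish is that, when $(\mathcal{F},S)$ has no bad solid tori, the positive orbit of every point $p\in M_1$ reaches $S_1$ in finite time and the negative orbit reaches $S_0$. Once this is in hand, for each leaf $L$ of $\mathcal{F}_1$ and each point $p\in L$, concatenating the forward and backward orbits through $p$ gives a positive transverse arc $\alpha_p\subset M_1$ from $S_0$ through $p$ to $S_1$. Re-interpreting $\alpha_p$ as an arc in $M$ with endpoints on $S$, I would chain $\alpha_p$ with further positive transverse arcs from $S_0$ to $S_1$, linking successive arcs via the identification $S_1\equiv S\equiv S_0$, until the chain closes up into a positive closed transversal of $\mathcal{F}$ that meets $L$. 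Compactness of $M_1$ together with Haefliger's theorem, which restricts the packets of compact leaves of any given diffeomorphism type in $\mathcal{F}_1$ to a finite number, then allows finitely many such closed transversals to collectively meet every leaf of $\mathcal{F}_1$.

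The hard step is establishing the central reachability claim, which I would prove by contradiction. Suppose the set $Y:=\{q\in M_1 : \phi_t^{+}(q)\notin S_1 \text{ for all } t\geq 0\}$ is nonempty. Then $Y$ is closed, forward invariant under $\phi^{+}$, and disjoint from $S_1$; by compactness it contains a nonempty minimal closed forward-invariant subset $Y_0$. Along the portion of $\partial Y_0$ in the interior of $M_1$, the positive transverse direction cannot point out of $Y_0$ (minimality would be violated), so after mild regularization this portion is transverse to $\mathcal{V}$ and hence tangent to $\mathcal{F}$, i.e.\ a union of leaves of $\mathcal{F}_1$; meanwhile $\partial Y_0\cap\partial M_1\subset S_0$ since $Y\cap S_1=\emptyset$. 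Combining the atoroidality of $M$ (via Corollary \ref{torus} and Lemma \ref{incompressible torus}, which force embedded tori carrying an essential curve to bound solid tori), Haefliger's compactness theorem applied to the relevant packets of annulus leaves, and the compact-free separatrix property (which rules out degenerate behaviour along $\partial Y_0\cap S_0\subset A_j$), one extracts inside $Y_0$ a solid torus bounded by bad annulus leaves $U_i$ and annulus subsurfaces $A_j\subset S_0$: a bad solid torus of type (2), contradicting the hypothesis. A symmetric argument using $\phi^{-}$ establishes the corresponding claim for negative orbits reaching $S_0$, and the main obstacle throughout is justifying that $\partial Y_0$ really has this clean solid-torus structure rather than a more pathological limit-set configuration.
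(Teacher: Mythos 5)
The easy direction (absence of bad solid tori is necessary) is correct and matches the paper's argument: all the boundary normals of a bad solid torus of type (1) point out, so a coherent closed transversal can only exit, hence must miss the bad annuli, contradicting (i).

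The hard direction has a genuine gap, and the difficulty traces to replacing the paper's reachability-by-arcs argument with a fixed flow. The paper (following Goodman) considers, for each leaf $L$ of $\mathcal{F}_1$, the set $\mathcal{A}_L$ of points reachable from $L\setminus S$ by \emph{arbitrary} coherent transverse arcs, not just flowlines of a chosen $\mathcal{V}$. This flexibility is what makes $\mathcal{A}_L$ open, and makes $\mathcal{A}_L - S$ saturated by leaves of $\mathcal{F}_1|\mathrm{int}(M_1)$; the boundary of such a set is a compact union of annular leaves and annuli in $S$ from which a bad solid torus is extracted. Your set $Y$ is defined by a single flow, so it is invariant for $\phi^+$ but has no reason to be saturated for $\mathcal{F}_1$, and a minimal closed forward-invariant subset $Y_0$ is automatically a minimal set of the flow, i.e.\ the closure of a flow orbit. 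Such sets are typically one-dimensional (a closed orbit) or fractal-transverse, not codimension-zero submanifolds, so the assertion that ``the portion of $\partial Y_0$ in the interior of $M_1$ \ldots is a union of leaves of $\mathcal{F}_1$'' does not follow from minimality, and the subsequent passage to a solid-torus structure for $Y_0$ is unsupported.

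Moreover, your central reachability claim --- that when there are no bad solid tori, the positive $\mathcal{V}$-orbit of \emph{every} point of $M_1$ reaches $S_1$ in finite time --- is both stronger than what is needed and likely false. The proposition only asks that every leaf of $\mathcal{F}_1$ meet some coherent closed transversal, and such a transversal need not intersect $S$ at all (condition (ii) is vacuous for transversals disjoint from $S$). Indeed, a leaf of $\mathcal{F}$ disjoint from $S$ admits closed transversals disjoint from $S$, and any such curve can be a closed orbit of a transverse line field; points on that orbit would lie in your $Y$ even though no bad solid torus is present. This is precisely why the paper phrases the goal as $\mathcal{A}_L=M$ (allowing closed coherent transversals that never cross $S$, obtained by the ``if $L\subset\mathcal{A}_L$'' observation) rather than as a statement about flowlines reaching $S_1$. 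To repair the argument you would need to abandon the fixed flow and argue about the open saturated set $\mathcal{A}_L$, as the paper does, reducing to the structure of $\partial\bar{\mathcal{A}}_L$ and showing that a proper $\mathcal{A}_L$ forces a bad solid torus on the other side.
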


\begin{definition} A positive system of transversals satisfying i), ii) is called a \textbf{complete system of coherent transversals}. A given positive transversal arc or simple closed curve is called \textbf{coherent} if it satisfies ii).  \end{definition}
	
	In the Section \S 7 we show how to modify $\mathcal{F} $ to $\mathcal{G}$ so that $S$ is a union of compact leaves.  If in addition $(\mathcal{F},S)$ has no bad solid tori, then we  show that $\gamma$ is transverse to $\mathcal{G}$ and intersects each leaf and hence  $\mathcal{G}$ is taut.
	
\begin{proof} Since the boundary of a bad solid torus which intersects $S_1$ (respectively $S_0$) consists of annular leaves of $\mathcal{F}_1$ whose normals point out (respectively in) and annuli subsurfaces of $S_1$ (respectively $S_0$) whose normals point out (respectively in) it follows that the tangential boundary of a bad solid torus cannot coherently intersect a closed transversal.  Thus the absence of bad solid tori is a necessary condition for the existence of a complete system of coherent transversals. 

The proof of the other direction is a modification of an argument of Goodman \cite{goodman1975closed}.  We now assume that $(\mathcal{F},S)$ has no bad solid tori.  Let $L$ be a leaf of $\mathcal{F}_1$.   Define 
 \[ \mathcal{A}_L = \{ q \in M \hspace{2mm}| \hspace{2mm} \Text{there exists a coherent transverse arc  from} \hspace{2mm} L\setminus S \hspace{2mm} \Text{to} \hspace{2mm}  q \} \]

It suffices to show that  $\mathcal{A}_L=M$ for all leaves $L$ of $\mFone$  transverse to $S$, including those disjoint from $S$.  If $L\subset \mathcal{A}_L$ then we can find a coherent closed transversal $\gamma' $ through $L$.  If $L$ is tangent to $S$ and $\mathcal{A}_{L^+}=M$ for some leaf $L^+$ just to the positive side of $L$, then by piecing together a transverse arc from $L^+$ to a leaf $L^-$ just to the negative side of $L$ and a transverse arc from $L^-$ to $L^+$ through $L$ we obtain a closed  transversal $\gamma'$ through $L$.   Now $\gamma'$ serves as a coherent transversal for leaves comprising an open set of $M_1$, so the result follows by compactness.  We now fix a leaf $L$ of $\mFone$ transverse to $S$.  In what follows, we drop the subscript in $\mA_L$.

\vskip 10 pt
We now establish some facts about $\mA$. 

\begin{enumerate}[i)]
	\item $\mathcal{A}$ is open in $M$.
	\item $\mathcal{A} - S$ is saturated by leaves of $\mFone|\text{int}(M_1)$, where $\text{int}(M_1)$ is the interior of $M_1$. 
	\item If $U=\BR^2\times \BR$ is a foliation chart for $\mF$ disjoint from $S$, then $U\cap \mathcal{A}$ is connected, open and saturated.
	\item If $U=\BR^2\times \BR$ is a foliation chart for $\mF$ that intersects $S$ in $0\times \BR^2$ and $q=(0,0,0)\in \bar\mA\setminus \mA$, then $V=U\cap \mathcal{A}$ is open and connected and after possibly passing to a smaller chart, includes quadrants as in Figure \ref{quarters}. 
	\item A point $x$ of tangency between $S$ and $\mF$ does not lie in $\bar\mA\setminus \mA$. Proof: By assumption, the leaf of $\mathcal{F}|S$ containing $x$ is non-compact and hence has an accumulation point. Denote by $L$ the leaf of $\mathcal{F}_1$ containing $x$. So using iii), if some nearby (in $M_1$) leaf to $L$ is in $\mathcal{A}$, then so is $L$.
	\begin{figure}
		\centering
		\includegraphics[width=4 in]{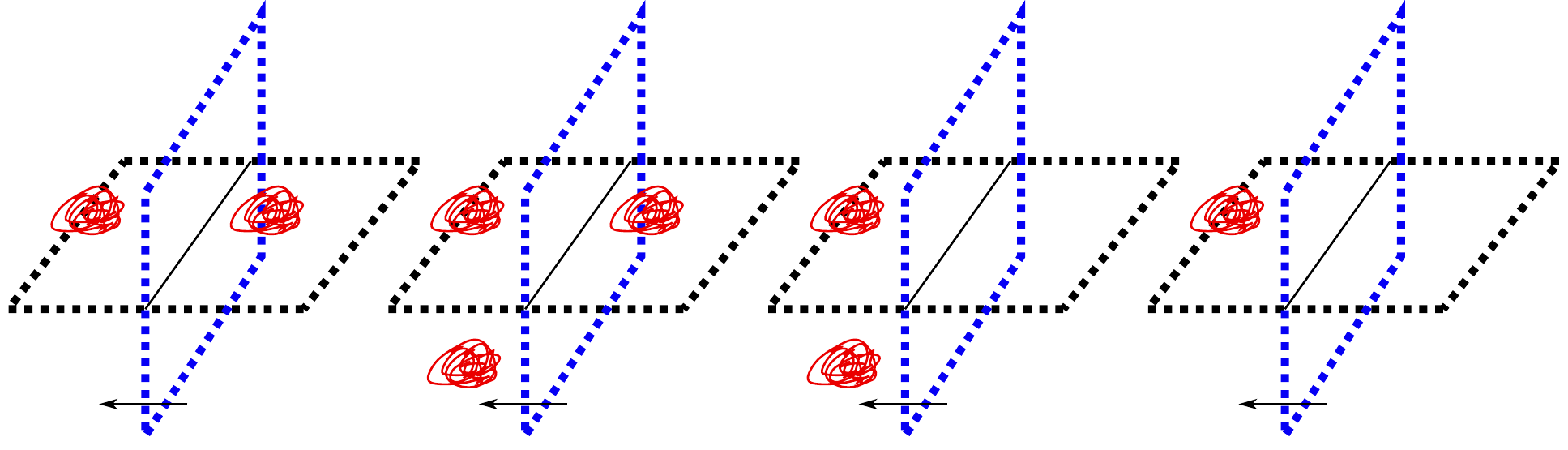}
		\caption{Different possibilities for a neighborhood of a point $p \in S $. Note the two obvious possibilities of having all quadrants or none are not drawn. The horizontal plane is the fully marked surface $S$ with positive normal pointing upward, and the vertical plane is a leaf of the foliation.}
		\label{quarters}
	\end{figure}
	\item $\bar\mA$ is a compact manifold with boundary whose interior is $\mathcal{A}$.
	\item $\partial\bar\mA$ consists of finitely many compact leaves $L_1, \cdots , L_n$ of $\mF_1$ and finitely many subsurfaces $S_1, \cdots, S_m$ of $S$.  $\partial\bar\mA$ is disjoint from points of tangency of $\mF$ with $S$.
	\item The normals to the $L_i$'s and $S_j$'s point into $\mathcal{A}$.
	\item Each $S_i$ is an annulus.  Proof:  $\mathcal{F}|S_i$ is a foliation without singularities having $\partial S_i$ as  leaves.
	\item Each component of $L_i$ is an annulus.  Proof: Double $\bar\mA$ along $S\cap \partial \mathcal{A}$ to obtain $V$, which has a non-singular inward pointing vector field.  It follows that each component of $\partial V$ is a torus and hence each $L_i$ is an annulus or torus.  Since $\mathcal{F}$ is taut and $M$ is atoroidal, it has no torus leaves and hence each $L_i$ is an annulus.
	\item Each component $T$ of $\partial\bar\mA$ is a torus bounding a solid torus $W$.  Proof: By ix) and x), $T$ is a torus.  It contains an essential simple closed curve, e.g. a leaf of $\mathcal{F}|S$, hence by Lemma \ref{torus} $W$ is a solid torus.  Note that either $\mA\subset W$ or $W$ is a component of the complement of $\mA$.
	\item The closure of each component $R$ of $S\cap \text{int}(W)$ is a finite union of properly embedded annuli.  Proof:  Figure \ref{quarters} shows that $R$ is a properly embedded surface whose boundary consists of leaves of $\mF|S$ and hence $R$ is $\pi_1$-injective in $M$.  Since $R$ lies in a solid torus this implies that $R$ is an annulus.
	\item The closure $\bar B$ of each component $B$ of $W\setminus S$ is a solid torus. One such $\bar B$ is bounded by a union of finitely many annuli, each of which lies in $S$ or leaves of $\mF$.  All these normals point into $\bar B$ or all these normal point out of $\bar B$.  Proof:  Suppose all the normals to $\partial W$ point out, which is the case when $W\cap \mA=\emptyset$.  An $R$ as in xii), outermost in $W$, cuts off a solid torus $\bar B$.  If the normal to $R$ points out of $\bar B$ we are done.  Otherwise let $W_1$ be the complementary solid torus.  Its boundary consists of annuli lying in either $S$ or leaves of $\mF$ and all the normals point out.  The result then follows by downward induction on the number of components of $S\cap \text{int}(W)$.
	\item $\bar B$ is a bad solid torus.  Proof: By construction $\text{int}( \bar B)\cap S=\emptyset$. If the normal vectors point out of (respectively into) $\bar B$, then viewed in $M_1$, $\partial \bar B\cap S_1\neq\emptyset$ and $\partial \bar B\cap S_0=\emptyset $ (respectively $\partial\bar B\cap S_0\neq\emptyset$ and $\partial \bar B\cap S_1=\emptyset)$.
\end{enumerate} 

\end{proof}

\section{Combinatorics of train tracks}

\subsection{Cutting surfaces}
\label{cutting-surfaces}
For the purpose of exposition, here is the definition of the complexity function of a surface that will be used later. Let $\mathcal{S}$ be the set of compact connected orientable surfaces. For $S \in \mathcal{S}$, denote the genus and the number of boundary components of $S$ with $g(S)$ and $b(S)$ respectively.  Define the complexity function $c_0(S)$ as the ordered pair $(g(S), b(S))$, with the lexicographic order.
We make the convention that for every surface $S \in \mathcal{S}$, $c_0(S) > c_0(\emptyset)$, where $\emptyset$ is the empty set. If $S$ is a compact orientable surface, then order the components of $S$ that are not disks or annuli as $S_1 , \cdots , S_n$ with
\[ c_0(S_1) \geq c_0(S_2) \geq \cdots \geq c_0(S_n), \]
and define the complexity function $c_1(S)$ as the tuple
\begin{equation} 
 c_1(S)= \big( c_0(S_1), c_0(S_2),\cdots, c_0(S_n), c_0(\emptyset), c_0(\emptyset), \cdots \big),
\label{surface-complexity}
\end{equation}
with the lexicographic order. 

\begin{lem}
Let $S$ be a compact orientable surface. Let $\alpha$ be a homotopically essential simple closed curve in $S$ that is not $\partial$-parallel. If we denote the cut-open surface $S \setminus \setminus \alpha$ by $S'$, then  $c_1(S')<c_1(S)$. \qed

\label{cut-surface}
\end{lem}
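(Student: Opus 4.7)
\medskip

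\noindent\textbf{Proof plan.} The curve $\alpha$ lies in a single connected component $S_i$ of $S$, and because $\alpha$ is essential and not $\partial$-parallel, $S_i$ must itself be essential (not a disk or annulus), hence it appears in the tuple defining $c_1(S)$. Cutting leaves every other component of $S$ unchanged, so the plan is to prove that each connected component of $S_i \setminus\setminus \alpha$ has strictly smaller $c_0$ than $c_0(S_i)$, and then deduce strict decrease of $c_1$ from the definition of the lexicographic order on the sorted tuple.

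First I would split into the two standard cases. If $\alpha$ is non-separating in $S_i$, then $S_i\setminus\setminus \alpha$ is a single connected surface $T$ with $g(T)=g(S_i)-1$ and $b(T)=b(S_i)+2$, so $c_0(T)=(g(S_i)-1,b(S_i)+2) < (g(S_i),b(S_i))=c_0(S_i)$ since the genus drops; note the disk case $g(T)=0,\,b(T)=1$ is numerically impossible, and the annulus case forces $S_i$ to be a torus, in which case $T$ simply disappears from the tuple. If $\alpha$ is separating, write $S_i \setminus\setminus \alpha = T_1 \sqcup T_2$ with $g(T_1)+g(T_2)=g(S_i)$ and $b(T_1)+b(T_2)=b(S_i)+2$. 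Since $\alpha$ is essential, neither $T_j$ is a disk (else $\alpha$ would bound a disk in $S_i$); since $\alpha$ is not $\partial$-parallel, neither $T_j$ is an annulus (else $\alpha$ would cobound an annulus with a component of $\partial S_i$). Thus both $T_1$ and $T_2$ contribute to the new tuple. To compare $c_0(T_j)$ to $c_0(S_i)$ I would argue: if $g(T_j)<g(S_i)$ we are done; if $g(T_j)=g(S_i)$ then $g(T_{3-j})=0$ and because $T_{3-j}$ is neither a disk nor an annulus it has $b(T_{3-j})\geq 3$, forcing $b(T_j)\leq b(S_i)-1<b(S_i)$, so again $c_0(T_j)<c_0(S_i)$.

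Finally, I would conclude using a purely combinatorial observation about sorted tuples. Let $c_0(S_i)=v$ and suppose $S_i$ is one of $k\geq 1$ essential components of $S$ with $c_0$-value $v$; these occupy some consecutive block of positions $p,p+1,\ldots,p+k-1$ in the sorted tuple $c_1(S)$. Passing from $S$ to $S\setminus\setminus \alpha$ removes $S_i$ from this multiset and inserts zero, one, or two new values, each strictly less than $v$. After re-sorting in non-increasing order, positions $1,\dots,p+k-2$ are unchanged, and position $p+k-1$ in the new tuple is now strictly less than $v$ (being the maximum of values that are all $<v$). Hence $c_1(S\setminus\setminus \alpha)<c_1(S)$ in the lexicographic order, which is what we wanted. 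The only subtlety is the bookkeeping in the separating case when one piece happens to share the genus of $S_i$, and that is handled by the boundary-counting argument above; no serious obstacle is anticipated.
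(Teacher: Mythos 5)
The paper gives no proof of this lemma (the statement is followed directly by \qed), so there is nothing of the authors' to compare against; one must simply judge whether your argument is correct, and it is. The case split into non-separating and separating $\alpha$, the observation that the containing component $S_i$ cannot be a disk or annulus, the Euler-characteristic bookkeeping $g(T_1)+g(T_2)=g(S_i)$ and $b(T_1)+b(T_2)=b(S_i)+2$ in the separating case, the exclusion of disks and annuli among the pieces using essentiality and non-$\partial$-parallelism, and the clean handling of the genus-preserving subcase via $b(T_{3-j})\geq 3$ are all correct. The one genuine subtlety you identified and dispatched properly is the final re-sorting step: removing a single copy of $v=c_0(S_i)$ from the multiset and inserting at most two values strictly smaller than $v$ leaves the first $p+k-2$ entries of the sorted tuple intact and strictly decreases the entry at position $p+k-1$, which is exactly what lexicographic comparison requires. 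You also correctly note the degenerate torus subcase of the non-separating branch, where the cut-open piece becomes an annulus and drops out of the tuple entirely (the decrease is then even more immediate). The only cosmetic remark is that in the separating case you exclude disks and annuli among the $T_j$ but do not mention spheres; since each $T_j$ carries a copy of $\alpha$ on its boundary it automatically has $b(T_j)\geq 1$, so this is harmless, but it is worth stating. In short: complete and correct.
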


%
%
%
%

\begin{lem}
Let $S$ be a compact, orientable surface and $F$ be a compact subsurface of $S$ such that no component of $S-F$ is a disk. Then $c_1(F) \leq c_1(S)$. \qed
\label{subsurface}
\end{lem}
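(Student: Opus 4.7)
The plan is to prove this by induction on the number $n$ of boundary circles of $F$ lying in $\operatorname{int}(S)$. In the base case $n=0$, every boundary circle of $F$ lies on $\partial S$, so $F$ is a disjoint union of some connected components of $S$; the tuple $c_1(F)$ is then obtained from $c_1(S)$ by deleting the entries corresponding to the components of $S$ absent from $F$ and re-padding with $c_0(\emptyset)$ at the tail, which immediately gives $c_1(F) \leq c_1(S)$.

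For the inductive step, choose a boundary circle $\alpha \subset \operatorname{int}(S)$ of $F$. The main case is when $\alpha$ is essential in $S$ and not $\partial$-parallel. Setting $S' := S \setminus\setminus \alpha$, Lemma \ref{cut-surface} gives $c_1(S') < c_1(S)$. The surface $F$ sits naturally inside $S'$ as a subsurface with $n-1$ internal boundary circles: both copies of $\alpha$ now lie on $\partial S'$, one appearing in $\partial F$ and the other in the boundary of the complement. Each component of $S' \setminus F$ is homeomorphic to a component of $S \setminus F$, so the no-disk-complement hypothesis persists. The inductive hypothesis yields $c_1(F) \leq c_1(S') < c_1(S)$.

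The remaining cases are when $\alpha$ is $\partial$-parallel in $S$ or bounds a disk in $S$. For a $\partial$-parallel $\alpha$ cobounding an annulus $A$ with some $\beta \subset \partial S$, I would replace $(S,F)$ with a pair $(S^*,F^*)$, homeomorphic to $(S,F)$ and hence with the same complexities, in which $\alpha$ has been pushed onto $\partial S^*$ by removing an annular collar of $\beta$ (from $S$, and also from $F$ when the $F$-side of $\alpha$ points into $A$). In this way $n$ drops by one, the no-disk-complement hypothesis is preserved, and the inductive hypothesis applies. When $\alpha$ bounds a disk $D \subset S$, one performs a local analysis inside $D$: the hypothesis that $S \setminus F$ has no disk components forces the structure of $F$ inside $D$ to be built out of trivial pieces in a nested fashion, so the contribution of this region to $c_1(F)$ is bounded by the corresponding contribution to $c_1(S)$, and a local simplification inside $D$ removes $\alpha$ without changing $c_1(F)$ or $c_1(S)$.

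The main obstacle will be the $\partial$-parallel and disk-bounding cases, where the complexity $c_1(S)$ does not strictly decrease; one has to verify that the local surgeries genuinely preserve the no-disk-complement hypothesis and do not introduce trivial complexity increases. The essential, non-$\partial$-parallel case handled by Lemma \ref{cut-surface} is the clean heart of the induction, and everything else is bookkeeping to reduce to that case.
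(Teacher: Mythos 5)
The paper offers no proof of this lemma (it is marked with a \qed), so there is nothing to compare your argument against directly; I evaluate it on its own merits. Your basic strategy — induction on the number of internal boundary circles of $F$, cutting along one such circle $\alpha$ — is the natural approach, and it is sound in the base case and in the case that $\alpha$ is essential and not $\partial$-parallel, where Lemma \ref{cut-surface} gives the strict drop $c_1(S \setminus\setminus \alpha) < c_1(S)$. Your $\partial$-parallel case, however, is handled imprecisely: removing a collar of $\beta$ and "pushing $\alpha$ onto $\partial S^*$" does not produce a pair homeomorphic to $(S,F)$ when $F$ has components living in the interior of the parallelism annulus $A$, which the hypothesis does not forbid. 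This is easily repaired — just cut along $\alpha$ as in the main case; the piece split off is an annulus, which does not enter the definition of $c_1$, so $c_1(S \setminus\setminus \alpha) = c_1(S)$, and the induction proceeds.

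The disk-bounding case, though, contains a genuine gap that cannot be closed as the statement is written. Cutting along such an $\alpha$ \emph{increases} $c_1$ (the component of $S$ containing $D$ acquires an extra boundary circle while keeping its genus), and your proposed "local simplification inside $D$" is never specified: filling in $D\cap(S\setminus F)$ can join components of $F$ and change $c_1(F)$, and there is no obvious replacement. Worse, the conclusion of the lemma actually fails here. Take $S$ a closed genus-two surface, fix concentric disks $D' \subset D \subset S$, and set $F = (S\setminus\operatorname{int} D) \sqcup D'$, so that $\overline{S\setminus F}$ is the annulus $D\setminus\operatorname{int}D'$, not a disk. Then the non-disk, non-annulus part of $F$ is a genus-two surface with one boundary circle, giving $c_1(F)=\bigl((2,1),c_0(\emptyset),\ldots\bigr) > \bigl((2,0),c_0(\emptyset),\ldots\bigr) = c_1(S)$. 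In other words, the ``no disk in $S\setminus F$'' hypothesis does not exclude inessential boundary circles of $F$, and that is exactly what your disk-bounding case would have to address. Some additional hypothesis is needed — for instance that $F$ itself has no disk components (which, via a nesting argument inside $D$, rules out disk-bounding circles in $\partial F$ altogether), or that the circles of $\partial F$ lying in $\operatorname{int}(S)$ are essential in $S$. Under such a strengthening the disk case disappears and your cut-and-induct argument goes through cleanly; as it stands, however, the disk case is where both your proof and the lemma's conclusion break down.
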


\subsection{Train tracks}
Let $F$ be a compact surface.  A \emph{train track} $\tau$ on $F$ is a finite collection of $1$-dimensional CW-complexes and circles disjointly embedded in $F$ such that 
\begin{enumerate}
\item Every vertex is trivalent.
\item At every vertex, there is a well-defined tangent line.
\item Every \emph{complementary region} has nonpositive \emph{index}.
\end{enumerate}
Figure \ref{standard-train-track}, top-left, shows the local picture around a vertex of $\tau$. The region around the vertex with angle zero is called a \emph{cusp}. A \emph{complementary region} is a connected component $R$ of $F - \tau$, and the \emph{index} of $R$ is defined as 
\[\text{Ind}(R)= \chi(R) - \frac{1}{2}(\text{number of cusps of }R). \]
Condition (3) rules out disks and monogons (i.e. a disk with one cusp), but we allow bigons (i.e. a disk with two cusps) and annuli. Condition (1) is called \emph{genericity} in some texts. A \emph{transverse orientation} on a train track is a choice of transverse orientation on each edge such that they are compatible at each vertex. A train track $\tau$ is called \emph{transversely oriented} if $\tau$ comes equipped with a transverse orientation, and \emph{transversely orientable} if there exists a choice of transverse orientation for $\tau$. A curve $\gamma$ is \emph{transverse} to $\tau$ if $\gamma$ intersects $\tau$ transversely and not at the vertices of $\tau$. 

A \emph{branched neighborhood} $N_b(\tau)$ of a train track $\tau$ is a neighborhood modelled locally as in Figure \ref{standard-train-track} bottom-left for the vertices of $\tau$, and it comes with a projection map $\pi \colon N_b(\tau) \rightarrow \tau$ such that for each $p \in \tau$, $\pi^{-1}(p)$ is an interval called a \emph{tie}. A \emph{singular tie} is $\pi^{-1}(v)$ for a vertex $v$ of $\tau$. Beware that some texts use a different definition for a branched neighborhood, in which the boundary of a branched neighborhood is composed of horizontal and vertical parts. 

\begin{figure}
\labellist
\pinlabel $\tau$ at 60 110
\pinlabel $N_b(\tau)$ at 60 10
\pinlabel $tie$ at 160 60
\endlabellist
\includegraphics[width = 2.5 in]{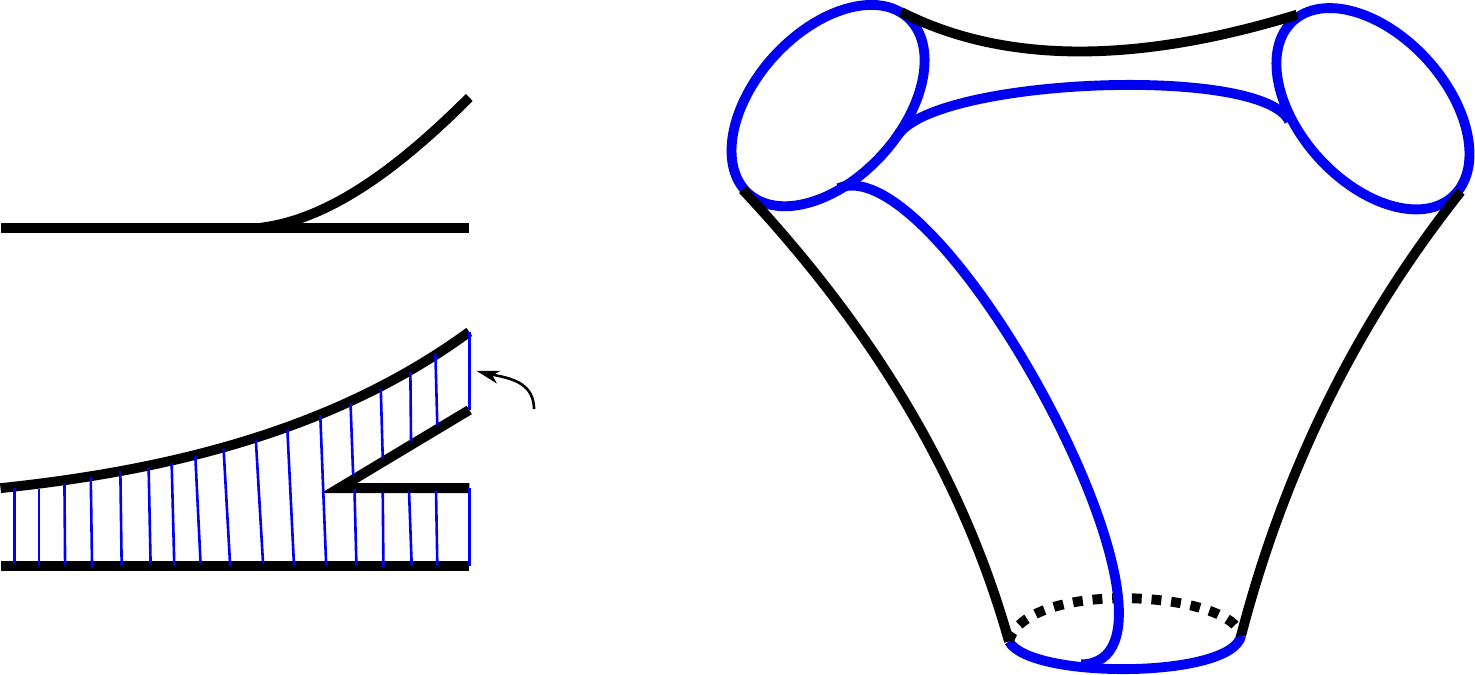}
\caption{Left: the local model for a branched neighborhood of a train track around a vertex, Right: a basic block}
\label{standard-train-track}
\end{figure}

A singular foliation $\mathcal{F}$ is \emph{carried by} a train track $\tau$ if there is a branched neighborhood $N_b(\tau)$ of $\tau$ such that the singularities of $\mathcal{F}$ correspond to the cusps of the branched neighborhood $N_b(\tau)$, the support of $\mathcal{F}$ is equal to $N_b(\tau)$, and $\mathcal{F}$ is transverse to the ties. By cutting the branched neighborhood along singular ties, one obtains the following combinatorial description. Let $e_1 , \cdots , e_n$ be the edges or simple closed curve components of $\tau$. For each edge $e_j$ consider the rectangle $e_j \times I$ with the product foliation, and define its \emph{vertical boundary} as $(\partial e_j) \times [0,1]$. Similarly for each simple closed curve component $e_j$ of $\tau$, consider $e_j \times I$ with the appropriate suspension foliation induced by $\mathcal{F}$. The branched neighborhood $N_b(\tau)$ is obtained from the union $e_j \times I_j$ for $1 \leq j \leq n$, by identifying them suitably along their vertical boundaries. A \emph{rectangle} is the image of an immersion $f \colon [0,1]\times [0,1] \rightarrow N_b(\tau)$ such that 
\begin{enumerate}
\item $f|(0,1)\times (0,1)$ is an embedding.
\item For each $t \in [0,1]$, $f([0,1] \times t)$ is contained in a finite union of leaves and singularities of $\mathcal{F}$; if $t \in (0,1)$ then $f( [0,1] \times t)$ is contained in a single leaf.
\item $f( t \times [0,1])$ is transverse to $\mathcal{F}$ for every $t \in [0,1]$.
\end{enumerate}
In particular for any edge $e_j$ of $\tau$, $e_j \times I$ is a rectangle.

We say $(\mathcal{F}, \tau)$ \emph{splits} to $(\mathcal{F}', \tau')$ if $(\mathcal{F}', \tau')$ is obtained from $(\mathcal{F}, \tau)$ by a finite sequence of moves as shown in Figure \ref{definition-splitting}.

\begin{figure}
\centering
\includegraphics[width =4 in]{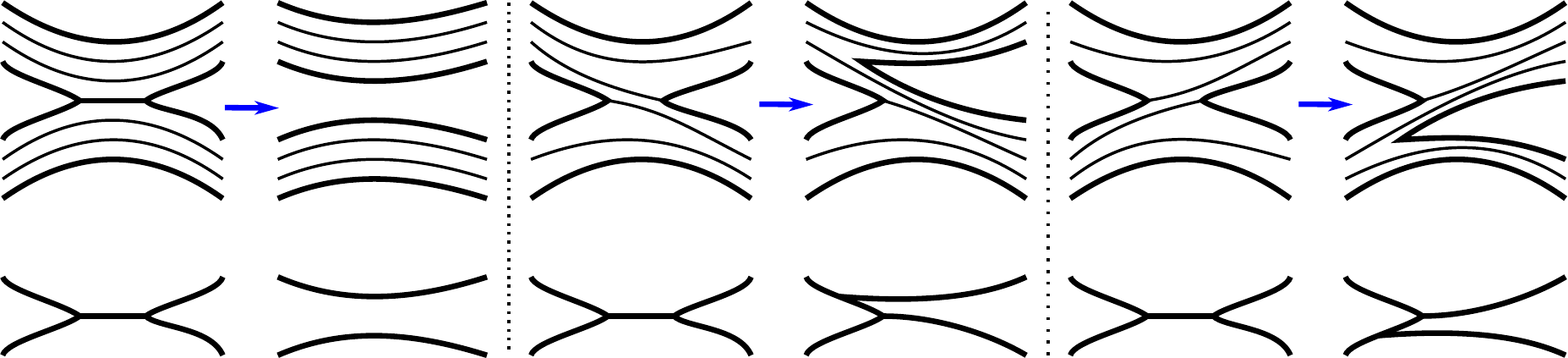}
\caption{Different splitting of a train track (bottom) and the carried foliation (top)}
\label{definition-splitting}
\end{figure}


A complementary region $R$ is called \emph{embedded} if the natural map $\overline{R} \rightarrow F$ is an embedding, where $\overline{R}$ is the closure of $R$. A \emph{smooth annulus} complementary region is an annulus complementary region with no cusps.


A \emph{closed curve} (respectively an \emph{arc}) $\alpha \subset \tau$ is the image of an immersion $S^1 \rightarrow \tau$ (respectively $[0,1] \rightarrow \tau$). We say that $\alpha $ is \emph{smooth} if the induced tangent line on $\alpha$ from $\tau$ is continuous. In other words the image of $\alpha$ has no cusps (respectively has no cusps when restricted to $(0,1)$). When the surface $F$ is orientable and $\tau$ is transversely orientable, a smooth arc whose endpoints coincide is a smooth closed curve as well. This is because a cusp at the endpoint of a smooth arc would be inconsistent with the transverse orientation of $\tau$. Note that a smooth simple closed curve $\alpha \subset \tau$ is homotopically essential, since the complementary regions to $\tau$ have nonpositive index. 

If $\tau$ is a transversely oriented train track, then an oriented closed curve $\gamma$ intersects $\tau$ \textbf{coherently} if $\gamma \cap \tau \neq \emptyset$, $\gamma$ is transverse to $\tau$, and at every point $p \in \gamma \cap \tau$ the orientation of $\gamma$ is consistent with the transverse orientation of $\tau$. An unoriented closed curve $\gamma$ intersects $\tau$ coherently if there is a choice of orientation for $\gamma$ such that all intersections of $\gamma$ with $\tau$ are coherent.

\begin{definition}
Let $F$ be a compact orientable surface, and $\tau$ be a train track on $F$. Define the complexity function  $c(F, \tau)$ as the triple
\[ c(F, \tau) = \big(c_1(F), c_2(\tau) , c_3(\tau) \big),   \]
where $c_1(F)$ is defined as in Equation (\ref{surface-complexity}) in Subsection \ref{cutting-surfaces}, $c_2$ is the number of edges of $\tau$, and $c_3$ is the number of circle components of $\tau$. We order the triples $c(F,\tau)$ lexicographically.
\label{train-track-complexity}
\end{definition}

\begin{definition}
A train track obtained by adding an edge connecting the boundary components of an annulus is called a \textbf{standard train tracked annulus} (Figure \ref{standard-train-tracked-annulus}). If we give a transverse orientation to the train tracks in the right side of Figure \ref{standard-train-tracked-annulus}, the picture on the bottom right admits an outward (respectively inward) pointing transverse orientation (with respect to the ambient annulus), while the picture on the top right admits a mixed transverse orientation. 
\end{definition}

\begin{figure}
\centering
\includegraphics[width = 3 in]{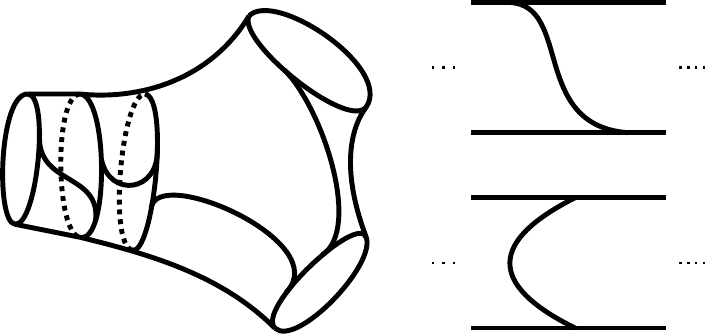}
\caption{Right: the two possible shapes for a standard train tracked annulus, Left: a generalized basic block.}
\label{standard-train-tracked-annulus}
\end{figure}

\begin{definition}
Let $F$ be a compact orientable surface, and $\tau \subset F$ be a train track. The pair $(F, \tau)$ is called a \textbf{basic block} if
\begin{enumerate}
\item $\tau$ is the union of $\partial F$ and finitely many disjoint arcs $\gamma_i$, where each $\gamma_i$ connects two distinct components of $\partial F$, and
\item for each component $b$ of $\partial F$, all the cusps of $\tau$ on $b$ point in the same direction.
\end{enumerate}
A pair $(F, \tau)$ is an \textbf{almost basic block} if it satisfies Condition (1) above. See Figure \ref{standard-train-track} for an example of a basic block.
\label{Definition-basic-block}
\end{definition}

\begin{definition}
A \textbf{generalized basic block} is obtained from a basic block by successive attachment of standard train tracked annuli to its boundary components (Figure \ref{standard-train-tracked-annulus}). 
\end{definition}

\begin{lem}
Let $F_0$ be a compact orientable surface, $\tau_0$ be a transversely orientable train track on $F_0$, and $\mathcal{F}_0$ be a singular foliation carried by $\tau_0$. Let $F$ be a compact subsurface of $F_0$ and $\tau = \tau_0 \cap F$. Assume that 
\begin{enumerate}
\item $(F, \tau)$ is an almost basic block.
\item Any smooth simple closed curve that is a union of edges of $\tau$, is a component of $\partial F$. 
\end{enumerate}
Then $(\mathcal{F}_0, \tau_0)$ splits to $(\mathcal{F}_1, \tau_1)$ such that the splitting is supported on $F$, and $(F, \tau_1 \cap F)$ is a basic block. 
\label{third-condition}
\end{lem}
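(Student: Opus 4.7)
The plan is to induct on a complexity function that vanishes exactly when $(F,\tau)$ is a basic block. Orient each boundary component $b$ of $F$, and let $n_+(b)$, respectively $n_-(b)$, denote the number of arcs $\gamma_i$ of $\tau$ whose initial tangent at $b$ points in the positive, respectively negative, direction along $b$. Set $m(b) = \min\{n_+(b), n_-(b)\}$ and $d(F,\tau) = \sum_b m(b)$. Then $d(F,\tau) = 0$ precisely when at each $b$ all cusps of $\tau$ on $b$ point the same way, i.e.\ when $(F,\tau)$ is a basic block.

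Suppose $d(F,\tau) > 0$. Then some boundary component $b$ carries two vertices $v_1, v_2$ adjacent in the cyclic order on $b$ whose arcs $\gamma_1, \gamma_2$ are tangent to $b$ in opposite directions, so the cusps at $v_1$ and $v_2$ face each other across the arc $e \subset b$ joining them. Inside the branched neighborhood $N_b(\tau_0)$, the two cusps lie on the same side of the rectangle carrying $e$ and are joined by a strip of leaves of $\mathcal{F}_0$ lying inside $F$. Tracing this strip forward from the cusp at $v_1$, it passes through a finite sequence of singular ties before reaching the cusp at $v_2$. Finiteness is where hypothesis (2) enters: an infinite trace would concatenate to produce a smooth simple closed curve in $\tau$ distinct from a component of $\partial F$, violating (2). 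The transverse orientability of $\tau_0$ then unambiguously selects a consistent non-collapsing elementary splitting from Figure \ref{definition-splitting} at each singular tie in the trace.

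Perform these elementary splittings in order to produce $(\mathcal{F}_1, \tau_1)$. The support of the modification is a neighborhood of the trace and hence contained in $F$. A local inspection shows that $\tau_1 \cap F$ is again a union of $\partial F$ and disjointly embedded arcs joining distinct components of $\partial F$: were some arc to acquire both endpoints on a single component, one could assemble from it a smooth closed curve of $\tau$ disjoint from $\partial F$, again contradicting (2). Accounting carefully for the splittings, the combined effect is to flip the tangent direction at $b$ of $\gamma_1$ or $\gamma_2$, so $m(b)$ decreases by $1$; the reciprocal change at the opposite endpoint of the flipped arc increases some $m(b')$ by at most $1$, but by choosing the starting pair $v_1, v_2$ minimizing $m(b)$ and using that $\gamma_1, \gamma_2$ terminate on different components, a direct count yields a net strict decrease in $d$. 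Since $d$ is a nonnegative integer the process terminates, and the terminal pair $(F, \tau_1 \cap F)$ is a basic block.

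The main obstacle is the combinatorial bookkeeping: checking that the sequence of elementary splittings assembles into a globally coherent modification of $(\mathcal{F}_0, \tau_0)$ preserving both the almost basic block structure and condition (2) at every intermediate step, and that the net effect on $d$ is strictly negative. Condition (2) together with transverse orientability is essential both to ensure each splitting trace terminates and to rule out the appearance of a smooth simple closed curve in $\tau_1$ that would obstruct reapplying the induction step.
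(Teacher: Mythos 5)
Your overall strategy — induction on a complexity function built from the cusp data on $\partial F$, using hypothesis (2) to guarantee that the splitting trace terminates — mirrors the paper's approach in spirit, though you use a different complexity (your $d(F,\tau)=\sum_b \min\{n_+(b),n_-(b)\}$ versus the paper's count $n(\tau)$ of boundary components with unidirectional cusps). Both are legitimate measures that vanish exactly on basic blocks, and the terminal claim that $d$ can be forced to decrease is at least as plausible as the paper's claim that $n$ can be forced to increase.

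That said, the proposal has a genuine gap in the step where the complexity is claimed to strictly decrease. You assert that ``the combined effect is to flip the tangent direction at $b$ of $\gamma_1$ or $\gamma_2$,'' but the elementary splitting moves of Figure~\ref{definition-splitting} do not obviously produce a flip of spiral direction; they slide vertices along the foliation and, when two cusps facing each other along $e\subset b$ collide, the resolution is dictated by $\mathcal{F}_0$ (not by your choice) and can instead merge the two arcs, slide one vertex past the other, or create an arc with both endpoints on $b$ (which you then need to rule out --- you invoke (2) here, but an arc with both endpoints on $b$ need not close up into a smooth simple closed curve of full edges, so (2) alone does not dispose of it). Moreover, the crucial final sentence — ``by choosing the starting pair $v_1, v_2$ minimizing $m(b)$ and using that $\gamma_1, \gamma_2$ terminate on different components, a direct count yields a net strict decrease in $d$'' — is stated without the count. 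It is not clear why minimizing $m(b)$ helps, nor what ``reciprocal change at the opposite endpoint'' you are worried about: a splitting supported near $v_1,v_2$ and the singular leaf between them does not touch the far endpoints of $\gamma_1,\gamma_2$, so if no reciprocal change occurs the caveat is vacuous, and if one does occur the inequality $d(\tau_1)<d(\tau)$ needs an explicit argument. A related unaddressed point: after your splitting, you must verify that $(F,\tau_1\cap F)$ is still an almost basic block satisfying (2), so that the induction can continue; this is exactly the kind of bookkeeping you flag as ``the main obstacle'' but do not carry out. Until the effect of the chosen splitting on $(F,\tau\cap F)$ is pinned down and the monotonicity of $d$ (or some other well-founded complexity) is actually verified, the proof is incomplete.
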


\begin{proof}
For each component $c$ of $\partial F$, let $f_{\tau}(c)$ be equal to the number of times that the cusps of $\tau$ lying on $c$ change their direction, as we go around $c$. Let $n(\tau)$ be the number of components $c$ of $F$ such that $f_\tau(c)=0$. It is easy to show that if $\tau$ is not a basic block but satisfies the hypothesis of the lemma, then $(\mathcal{F}_0,\tau_0)$ splits to $(\mathcal{F}_1, \tau_1)$ with the splitting supported on $F$ and such that $\tau_1$ also satisfies the hypothesis of the lemma and $n(\tau_1 \cap F)>n(\tau_0 \cap F)$. This implies the lemma.
\end{proof}

\begin{notation}(Active subsurface)
Given a transversely oriented train track $\tau \subset F$ and a branched neighborhood $N_b(\tau)$ of $\tau$, let $C$ be the union of components of $\overline{F - N_b(\tau)}$ that have no cusps. Define the \textbf{active subsurface} $\mathcal{A}_{N_b(\tau)}$ of $\tau$ as $\overline{F - C}$. 
\label{active subsurface}
\end{notation}

\begin{remark}
The active subsurface has tangential and transverse parts inherited from $N_b(\tau)$. The boundary of the tangential/transverse part of the active subsurface comes with a transverse orientation, except at nodes, induced from $\partial N_b(\tau)$.
\end{remark}

The following lemma is a key combinatorial tool for tangentially extending the foliation $\mathcal{F}_1$ of $M_1 = M \setminus \setminus S$ to one with $S$ a union of compact leaves.

\begin{lem}
\label{reduction}
Let $F$ be a compact orientable surface, $\tau$ be a transversely oriented train track on $F$ with $\partial F \subset \tau$, and $\mathcal{F}$ be a singular foliation carried by $\tau$. Then there is a sequence $(\mathcal{F}_i, \tau_i)$ for $1 \leq i \leq k$ starting at $(\mathcal{F}, \tau)$ and ending at $(\mathcal{F}', \tau')$ such that each term is obtained from the previous one by either splitting or collapsing an embedded bigon, and $\tau'$ satisfies at least one of the following:
\begin{enumerate}
\item There is a homotopically essential, non $\partial$-parallel, simple closed curve $\delta \subset F$ that is disjoint from $\tau'$.
\item There is a homotopically essential, non $\partial$-parallel, simple closed curve $\gamma \subset F$ intersecting $\tau'$ coherently. 
\item For each component $K$ of $\mathcal{A}_{N_b(\tau')}$, $(K,\tau' \cap K)$ is a generalized basic block. 
\end{enumerate}
\end{lem}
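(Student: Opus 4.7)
My approach is induction on the complexity triple $c(F,\tau)$ of Definition~\ref{train-track-complexity}, with an inner case analysis on the local structure of $\tau$ inside each component of the active subsurface $\mathcal{A}_{N_b(\tau)}$. The goal at each stage is to show that, if none of (1)--(3) already holds, then either some move (a splitting or an embedded bigon collapse) strictly decreases the complexity, or a short sequence of moves lands us directly in one of the three alternatives.

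First I would eliminate embedded bigon complementary regions: each collapse strictly reduces $c_2(\tau)$ while leaving $c_1(F)$ unchanged, so by induction I may assume $\tau$ has no embedded bigon complementary region. Next I scan the remaining complementary regions of $\tau$ and the smooth $1$-subcomplexes of $\tau$ itself. If some complementary region contains a homotopically essential, non-$\partial$-parallel simple closed curve of $F$, condition~(1) is met. If instead $\tau$ contains a smooth, essential, non-$\partial$-parallel simple closed curve $\sigma$, I push $\sigma$ off normally into a small tubular neighborhood oriented by the transverse orientation and close the push-off up through $F\setminus N(\sigma)$, producing a simple closed curve $\gamma$ meeting $\tau$ coherently and hence establishing condition~(2). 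Having ruled out (1) and (2), every smooth simple closed curve contained in $\tau$ is either $\partial$-parallel or a component of $\partial F$.

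At this point I verify that on each component $K$ of $\mathcal{A}_{N_b(\tau)}$ the pair $(K,\tau\cap K)$ is an almost basic block in the sense of Definition~\ref{Definition-basic-block}: the only obstruction would be an interior arc of $\tau\cap K$ with both endpoints on the same boundary component of $K$, but each such arc together with a subarc of $\partial K$ would either bound an embedded bigon or form a smooth essential simple closed curve in $\tau$, both of which have been excluded. Lemma~\ref{third-condition} then delivers, via finitely many splittings supported in $K$, an actual basic block structure on $K$. The annular collars that separate $\mathcal{A}_{N_b(\tau)}$ from $\partial F$ or from the cuspless complementary regions appear as successive attachments of standard train tracked annuli, so $K$ is a generalized basic block, establishing condition~(3).

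The main obstacle is the case~(2) step: constructing a coherent transverse closed curve from a smooth essential $\sigma \subset \tau$ when $\sigma$ is separating, or when several smooth essential loops of $\tau$ are nested and share edges. One may need an additional preliminary sequence of splittings to isolate $\sigma$ as a disjoint circle component of $\tau$ before the dual curve can be built cleanly, and the bookkeeping to ensure that these preliminary splittings do not increase the complexity in a way that breaks the outer induction is the most delicate point of the argument.
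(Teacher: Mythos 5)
Your broad strokes are aligned with the paper's proof (eliminate bigons, then dichotomize on essential smooth closed curves in $\tau$, falling back to basic blocks via Lemma~\ref{third-condition}), but two steps contain genuine gaps that the paper resolves with arguments your proposal does not reproduce.

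First, the Condition~(2) step. You claim a normal push-off of a smooth essential simple closed curve $\sigma \subset \tau$ yields a curve $\gamma$ meeting $\tau$ coherently, and you flag as a worry only the separating and nested-loop cases. But the actual obstruction is more basic: edges of $\tau$ emanating from $\sigma$ on the push-off side may spiral toward $\sigma$ in \emph{opposite} directions, and then the push-off crosses those edges with inconsistent signs relative to the transverse orientation, so it is not coherent. Nothing in your setup rules this out. The paper handles this (Case~1, subcase~b.ii) by observing that two adjacent edges spiraling toward each other force a split that reduces the number of edges on that side, and iterating until either no edges remain (giving Condition~1, not Condition~2) or all edges spiral the same way (giving Condition~2). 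Note also that a smooth essential curve in $\tau$ does not by itself point toward Condition~2: if no edges emanate on the chosen side, you get Condition~1, and your proposal collapses these two outcomes.

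Second, the fallback to Condition~(3). You assert that, once (1) and (2) fail and bigons are collapsed, each component $K$ of the active subsurface with $\tau \cap K$ is already an almost basic block, arguing that the only possible obstruction is an interior arc with both ends on the same boundary component. This is not the only obstruction. After bigon collapse, $\tau$ may still have vertices that do not lie on any $\partial$-parallel smooth closed curve of $\tau$, and such a vertex gives $\tau \cap K$ a branching structure in the interior of $K$ that is not a union of $\partial K$ and disjoint arcs. The paper's proof contains a dedicated step for this: for each vertex $v$ not on $\mathcal{A}(\tau)$ it follows the projection of the singular leaf emanating from $v$ and performs splittings until every vertex lands on a curve in $\mathcal{A}(\tau)$, only afterward cutting off the annular collars $A_c$ and reducing to the almost basic block setting on $F_1$. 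Without this step your verification that $(K,\tau\cap K)$ is an almost basic block does not go through, and Lemma~\ref{third-condition} cannot be invoked.

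A smaller point: your induction is on $c(F,\tau)$ with $F$ the ambient surface, but since $F$ is fixed throughout, this is really induction on $(c_2,c_3)$ alone; that is fine for the bigon-collapse step but does not control the splittings needed to relocate vertices or to resolve opposite spiraling, which can increase the edge count locally. The paper avoids a global induction on complexity inside this lemma and instead uses ad hoc terminating procedures for each sub-step.
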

\begin{proof}
Consider two cases: 

\textbf{Case 1)} There is a non $\partial$-parallel smooth simple closed curve $\alpha$ that is either a component of $\tau$ or a union of edges of $\tau$.

\textbf{Case 2)} Every smooth simple closed curve that is either a component of $\tau$ or a union of edges of $\tau$ must be $\partial$-parallel.

\underline{First we analyze Case 1}. We may split $(\mathcal{F}, \tau)$ to $(\mathcal{F}', \tau')$ and find a simple closed curve $\alpha'$ isotopic to $\alpha$ that is a union of edges of $\tau'$ or a component of $\tau'$, and is disjoint from $\partial F$. Note that $\alpha'$ is not $\partial$-parallel either, since $\alpha$ and $\alpha'$ are isotopic. Pick a side for $\alpha'$ and call it the plus side. There are two subcases:

\begin{enumerate}[a)]
	\item There is no edge of $\tau'$ emanating from $\alpha'$ on the plus side. 
	\item There is at least one edge of $\tau'$ emanating from $\alpha'$ on the plus side. 
\end{enumerate}

If a) happens, let $\delta$ be a curve obtained by pushing $\alpha'$ slightly to the plus side. Note that $\alpha'$ is homotopically essential, and hence so is $\delta$. Therefore $\delta$ satisfies Condition 1. 

If b) happens, there are two subcases: 
\begin{enumerate}[i)]
	\item All edges of $\tau'$ emanating from $\alpha'$ on the plus side spiral in the same direction.
	\item Not all the edges of $\tau'$ emanating from $\alpha'$ on the plus side spiral in the same direction.
\end{enumerate}

If i) happens, let $\gamma$ be a curve obtained by pushing $\alpha'$ slightly to the plus side. Then $(\mathcal{F}', \tau')$ and $ \gamma$ satisfy Condition 2.

If ii) happens, then there is a segment on $\alpha'$ where two adjacent edges on the plus side spiral in opposite directions and point towards each other.  By splitting along one of them the resulting train track still embeddedly carries $\alpha'$ but has one or two less edges on the plus side. Eventually either that side has no edges or they all spiral in the same direction. Therefore we are back in the previous cases. This finishes the proof for Case 1.\\

\underline{Now we consider Case 2}. Collapse embedded bigons one by one until no embedded bigon is left. Let $\mathcal{A(\tau)}$ be the set of smooth simple closed curves $\alpha$ such that either $\alpha$ is a union of edges of $\tau$, or $\alpha$ is a simple closed curve component of $\tau$. In particular $\partial F \subset \mathcal{A(\tau)}$.We claim that any two elements of $\mathcal{A}(\tau)$ are disjoint. To see this, recall that any element of $\mathcal{A(\tau)}$ is parallel to a component of $\partial F$. Therefore, if two elements $\alpha,\beta \in \mathcal{A}(\tau)$ intersect, there must have been an embedded bigon complementary region.  \\

Let $v$ be a vertex of $\tau$. Consider the singular point $p$ of the foliation $\mathcal{F}$ corresponding to $v$, and let $r$ be the singular leaf emanating from $p$ in $\mathcal{F}$. Let $P$ be the projection of $r$ onto $\tau$ under the projection map $\pi \colon N_b(\tau) \rightarrow \tau$. If $P$ is a finite ray ending at a singularity, split $(\mathcal{F}, \tau)$ to reduce the number of vertices. After splitting along all such finite $P$, we obtain $(\mathcal{F}', \tau')$. We show that after splitting, all the vertices of the train track lie on $\mathcal{A}$. Let $w$ be a vertex of $\tau'$ not lying on $\mathcal{A(\tau')}$, if such a vertex exists. Then the ray $P$ starting at $w$ is an infinite smooth arc in $\tau'$ starting at $w$. If $P \cap \mathcal{A}(\tau') \neq \emptyset$, then we can split $(\mathcal{F}', \tau')$ to move $w$ to $\mathcal{A}(\tau')$. If $P \cap \mathcal{A}(\tau') = \emptyset$, then some vertex in $P$ is repeated, implying that there is a smooth simple closed curve in $\tau'$ but not in $\mathcal{A}(\tau')$, which is not possible. Call the new pair $(\mathcal{F}'',\tau'')$. Every vertex of $\tau''$ lies on $\mathcal{A}(\tau'')$.

Recall that every curve in $\mathcal{A}(\tau'')$ is $\partial$-parallel in $F$, and any two elements of $\mathcal{A}(\tau'')$ are disjoint. For any component $c$ of $\partial F$, define $A_c$ as the maximal annulus neighborhood of $c$ in $F$ with $\partial A_c \subset \mathcal{A}$. Let $F_1 = \overline{F-\cup_c A_c}$ and $\tau_1 = \tau'' \cap F_1$. After collapsing embedded bigons, $\tau_1 \cap A_c$ is a union of standard train tracked annuli and smooth annuli attached together. 

The train track $\tau_1$ is the union of $\partial F_1$ and arcs $\gamma_i$ that go between (not necessarily distinct) components of $\partial F_1$. Note that no $\gamma_i$ can connect a component $b$ of $\partial F_1$ to itself; otherwise the ends of $\gamma_i$ spirals around $b$ in different directions and hence one may construct a smooth simple closed curve in $\tau_1$ that is not a component of $\partial F_1$. By Lemma \ref{third-condition}, $(\mathcal{F}'', \tau'')$ splits to $(\mathcal{F}''', \tau''')$ with splitting supported on $F_1$ such that $(F_1, \tau''' \cap F_1)$ is a basic block. Hence Condition 3 is satisfied, and the proof of Case 2 is complete. 
\end{proof}

\section{Construction of the new foliation}

In this section, starting with a fully marked surface $S$ for the foliation $\mathcal{F}$ in the closed 3-manifold $M$, and a complete system of coherent  transversals $\gamma$, we show how to modify the foliation near $S$ to obtain $\mathcal{G}$ so that $S$ is a union of leaves and the plane field of $\mathcal{F}$ is homotopic to that of $\mathcal{G}$. 

We first show that there exists a vector field on $M$ that is transverse to both $S$ and $\mathcal{F}$.  This enables us to readily keep track of the homotopy class, as $\mathcal{G}$ will retain this property.  Next we cut $M$ along $S$ to obtain the compact manifold $M_1$ and the foliation $\mathcal{F}_1=\mathcal{F}|M_1$, such that $\mathcal{F}_1|\partial M_1$ is a singular foliation with saddle singularities. We push $S \subset M$ slightly in both directions to obtain a manifold $N_2 \subset M_1$ with foliation $\mathcal{F}_2$. Next we extend $\mathcal{F}_2$ to $\mathcal{F}_3$ and $N_2$ to $N_3 \subset M_1$ so that the foliation near $\partial N_2$ looks like a manifold with corners, see Figure \ref{corners}. After more extension to say $\mathcal{F}_4$ and $N_4 \subset M_1$, we see that $N_4$ has been foliated with $\partial N_4$ as leaves, except for finitely many transverse vertical annuli. See Figure \ref{spiral}. By using partial $I$-bundle replacements--generalizing some of the ideas in \cite{MR1162560}--we can remove the transverse vertical annuli one by one, to obtain $\mathcal{F}_5$ on $N_5 = M_1$ with $\partial M_1$ as leaves. Finally, $\mathcal{G}$ is the foliation induced from $\mathcal{F}_5$ by regluing $\partial M_1$ to obtain $M$.  

The operations can be done so that $\gamma$ remains a complete system of coherent transversals for the constructed foliation, implying that the new constructed foliation is taut. 

\subsection{A coherent transverse vector field}

\begin{definition}
Let $M$ be a compact orientable 3-manifold, $\mathcal{F}$ be a taut foliation on $M$, and $S$ be a fully marked surface. A vector field $\mathcal{L}$ defined on $M$ is \textbf{coherently transverse} to $\mathcal{F}$ (respectively $S$) if $\mathcal{L}$ is transverse to $\mathcal{F}$ (respectively $S$) and the orientation of $\mathcal{L}$ is compatible with the transverse orientation of $\mathcal{F}$ (respectively positive orientation of $S$).
\end{definition}

\begin{prop}
Let $M$ be a compact orientable 3-manifold, $\mathcal{F}$ be a taut foliation on $M$, and $S$ be a fully marked surface. There is a vector field $\mathcal{L}$ on $M$ that is coherently transverse to both $\mathcal{F}$ and $S$.
\label{vector-field}
\end{prop}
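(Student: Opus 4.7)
The plan is to build $\mathcal{L}$ explicitly by combining the unit normal vector field to $\mathcal{F}$ with a cut-off extension of the unit normal to $S$. First I would fix an auxiliary Riemannian metric on $M$. Since $\mathcal{F}$ is transversely orientable, the unit normal vector field $\nu_\mathcal{F}$ compatible with the transverse orientation is a globally defined continuous vector field on $M$. Let $\nu_S$ denote the unit normal to $S$ compatible with the positive orientation, defined on $S$ (for any torus components of $S$, fix any orientation consistent with the transverse orientation of $\mathcal{F}$; the condition at tangent points is vacuous there).

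The pointwise core of the construction is the observation that at every $p \in S$, the vector $\nu_\mathcal{F}(p) + \nu_S(p)$ is coherently transverse to both $\mathcal{F}$ and $S$. Indeed,
\[
\langle \nu_\mathcal{F} + \nu_S , \nu_\mathcal{F} \rangle \;=\; \langle \nu_\mathcal{F} + \nu_S , \nu_S \rangle \;=\; 1 + \langle \nu_\mathcal{F} , \nu_S \rangle,
\]
so it suffices to prove $\langle \nu_\mathcal{F}(p), \nu_S(p) \rangle > -1$. At a tangent (saddle) point the positive fully marked hypothesis forces $\nu_S(p) = \nu_\mathcal{F}(p)$ and this inner product equals $1$. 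At a transverse point $T_pS$ and $T_p\mathcal{F}$ are distinct $2$-planes in $T_pM$, hence $\nu_S(p) \neq \pm \nu_\mathcal{F}(p)$ and $\langle \nu_\mathcal{F}(p), \nu_S(p) \rangle \in (-1,1)$.

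Next I would extend $\nu_S$ to a continuous vector field $\tilde{\nu}_S$ on a tubular neighborhood $U$ of $S$ via the normal projection $U \to S$, and pick a bump function $\phi \co M \to [0,1]$ with $\phi \equiv 1$ on $S$ and $\mathrm{supp}(\phi) \subset U$. Define
\[
\mathcal{L} \;:=\; \nu_\mathcal{F} + \phi \, \tilde{\nu}_S .
\]
Restricted to $S$ this is $\nu_\mathcal{F}+\nu_S$, which by the previous paragraph is coherently transverse to both $\mathcal{F}$ and $S$. Since $S$ is compact, continuity gives an $\varepsilon>0$ with $\langle \nu_\mathcal{F}, \nu_S\rangle > -1+\varepsilon$ on $S$; shrinking $U$ we may assume $\langle \nu_\mathcal{F}, \tilde{\nu}_S\rangle > -1+\varepsilon/2$ throughout $U$. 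Then on $U$,
\[
\langle \mathcal{L}, \nu_\mathcal{F} \rangle \;=\; 1 + \phi\,\langle \tilde{\nu}_S, \nu_\mathcal{F}\rangle \;\geq\; 1 - \phi\bigl(1 - \varepsilon/2\bigr) \;\geq\; \varepsilon/2 \;>\; 0,
\]
while outside $U$ we simply have $\mathcal{L}=\nu_\mathcal{F}$. Thus $\mathcal{L}$ is coherently transverse to $\mathcal{F}$ everywhere on $M$, and coherently transverse to $S$ along $S$.

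I do not expect a serious obstacle here: the one place requiring care is the pointwise inequality $\langle \nu_\mathcal{F},\nu_S\rangle > -1$, which rests entirely on (i) the agreement of normal orientations at saddles (built into the definition of positive fully marked) and (ii) the genuine transversality of $S$ and $\mathcal{F}$ at non-tangent points. An alternative, more intrinsic route would be to observe that the admissible vectors form an open, non-empty, convex cone $A_p \subset T_pM$ at each point, and to obtain $\mathcal{L}$ as a section produced by a partition of unity in local trivializations of $TM$; but the Riemannian construction above is more concrete and avoids any bundle-theoretic gymnastics.
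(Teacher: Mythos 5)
Your proof is correct and takes essentially the same approach as the paper: both rely on the observation that the fully marked hypothesis makes the open cone of vectors coherently transverse to both $\mathcal{F}$ and $S$ nonempty at every point of $S$, and both patch a section of that cone near $S$ with $\nu_\mathcal{F}$ away from $S$. Your version is slightly more concrete — you write down the explicit section $\nu_\mathcal{F}+\nu_S$ and interpolate with a bump function and an inner-product estimate, whereas the paper picks an abstract section of the disk bundle $H_1\cap H_2$ and interpolates linearly using the global trivialization $TM\cong M\times\mathbb{R}^3$ — but the underlying idea is identical.
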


\begin{proof}
Equip $S$ with its positive orientation. Let $g$ be any initial metric on $M$, and $UT_p(M)$ be the unit tangent space to $M$ at $p \in M$. Denote by $n_1(s)$ (respectively $n_2(s)$) the oriented unit normal vector to $\mathcal{F}$ (respectively $S$) at $s \in S$, and by $H_1(s)$ (respectively $H_2(s)$) the open disk in $UT_s(M)$ corresponding to unit length vectors that make an acute angle with $n_1(s)$ (respectively $n_2(s)$). Since $S$ is a fully marked surface, we have $H_1(s) \cap H_2(s) \neq \emptyset$ for each $s \in S$. The proof consists of two steps. 

\emph{Construction of $\mathcal{L}_1$ in a tubular neighborhood of $S$}: $H_1(s) \cap H_2(s)$ is a topological disk, and the collection of all $H_1(s) \cap H_2(s)$ for $s \in S$ forms a disk bundle over $S$. Pick a section of this disk bundle and set $\mathcal{L}_1|S$ to be equal to this section. Let $S \times [-2 , 2 ]$ be a small tubular neighborhood of $S$. Extend $\mathcal{L}_1$ to $S \times [-1,1]$ by parallel transport. Therefore, $\mathcal{L}_1$ is coherently transverse to both $S$ and $\mathcal{F}$.

\emph{Extending $\mathcal{L}_1$ to $M$}: Let $\mathcal{L}_2$ be any vector field that is defined on $M$, and is coherently transverse to $\mathcal{F}$. The tangent bundle $TM$ of $M$ is trivial (\cite{stiefel1935richtungsfelder} or see \cite[Page 148]{milnor2016characteristic}). Fix a trivialization of $TM$ to identify it with $M \times \mathbb{R}^3$. Define $\mathcal{L}$ to coincide with $\mathcal{L}_1$ on $S \times [-1,1]$, and with $\mathcal{L}_2$ on $M \setminus (S \times [-2, 2])$. Moreover, using the identification $TM \cong M \times \mathbb{R}^3$ define
\[ \mathcal{L}(s,t) = (|t|-1)\mathcal{L}_2 + (2-|t|) \mathcal{L}_1 \hspace{3mm} \text{for  } s \in S, \hspace{2mm} |t| \in [1,2]. \]
Note that at $|t| =1$ (respectively $|t|= 2$), $\mathcal{L}$ coincides with $\mathcal{L}_1$ (respectively $\mathcal{L}_2$). Moreover, as both $\mathcal{L}_1$ and $\mathcal{L}_2$ are coherently transverse to $\mathcal{F}$, then so is any convex combination of them. In particular $\mathcal{L}$ is nonzero at every point. This completes the proof.
\end{proof}

\begin{prop}
Let $M$ be a compact orientable $3$-manifold and $\mathcal{L}$ be a vector field on $M$. Assume that $\mathcal{F}$ and $\mathcal{G}$ are transversely oriented codimension-one foliations on $M$. If both $\mathcal{F}$ and $\mathcal{G}$ are coherently transverse to $\mathcal{L}$, then the oriented tangent plane fields to $\mathcal{F}$ and $\mathcal{G}$ are homotopic.
\label{homotopic}
\end{prop}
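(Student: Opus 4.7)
The plan is to build a pointwise homotopy between the two oriented plane fields by a convex combination argument on their unit normal vectors. The key observation is that at each point $p \in M$, the space of oriented 2-planes in $T_pM$ that are transverse to the nonzero vector $\mathcal{L}(p)$ and oriented so that $\mathcal{L}(p)$ lies on the positive side is contractible: via an auxiliary metric, it is identified with the open hemisphere of unit normals $n$ satisfying $\langle n, \mathcal{L}(p)\rangle > 0$. Both $T_p\mathcal{F}$ and $T_p\mathcal{G}$ lie in this contractible set by the coherent transversality hypothesis, so a straight-line interpolation on the unit normals will succeed.

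Concretely, first I would fix any auxiliary Riemannian metric $g$ on $M$, which converts oriented 2-plane fields into oriented unit normal vector fields. Let $n_{\mathcal{F}}, n_{\mathcal{G}} \colon M \to TM$ denote the unit normal vector fields to the oriented tangent plane fields of $\mathcal{F}$ and $\mathcal{G}$, each oriented to agree with the transverse orientation of the corresponding foliation. The hypothesis that $\mathcal{L}$ is coherently transverse to both foliations translates into the pointwise inequalities
\[ \langle \mathcal{L}(p), n_{\mathcal{F}}(p)\rangle_g > 0 \quad \text{and} \quad \langle \mathcal{L}(p), n_{\mathcal{G}}(p)\rangle_g > 0 \quad \text{for all } p \in M. \]

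Next, I would form the straight-line family
\[ v_t(p) := (1-t)\, n_{\mathcal{F}}(p) + t\, n_{\mathcal{G}}(p), \qquad t \in [0,1]. \]
By bilinearity, $\langle \mathcal{L}(p), v_t(p)\rangle_g > 0$ for every $p \in M$ and $t \in [0,1]$, so $v_t(p) \neq 0$ everywhere. Therefore the normalization $n_t := v_t/\|v_t\|_g$ is a continuous one-parameter family of oriented unit vector fields on $M$ with $n_0 = n_{\mathcal{F}}$ and $n_1 = n_{\mathcal{G}}$. Taking orthogonal complements with the induced orientation produces the desired homotopy of oriented plane fields.

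The main potential pitfall is purely notational: one must be careful that the \emph{orientations} of the plane fields really do correspond to normals lying in the same open hemisphere, and this is exactly what the definition of coherent transversality provides. Beyond this bookkeeping, there is no serious obstacle; the argument reduces to the trivial fact that convex combinations of vectors with positive inner product against a fixed vector again have positive inner product against it.
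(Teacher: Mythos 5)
Your proof is correct and takes essentially the same approach as the paper: a fiberwise straight-line homotopy on the unit normal vector fields, using that the open half-space of vectors pairing positively with $\mathcal{L}(p)$ is convex. The only cosmetic difference is that the paper first fixes a trivialization $TM \cong M \times \mathbb{R}^3$ (invoking Stiefel's theorem) and phrases the argument in terms of sections of a trivial bundle with convex fiber, whereas you perform the convex combination directly in each $T_pM$, making the appeal to triviality of $TM$ unnecessary.
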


\begin{proof}
The tangent bundle of any compact orientable 3-manifold is trivial. Fix such a trivialization $TM \cong M \times \mathbb{R}^3$. Choose a Riemannian metric $g$ on $M$, and let $B \subset M \times \mathbb{R}^3$ be the bundle over $M$ consisting of vectors that make an acute angle with the vector field $\mathcal{L}$. The fiber $F$ of $B$ can be identified with the set of vectors in $\mathbb{R}^3$ with positive $z$ coordinates, and the bundle $B$ can be identified with $M \times F$. Denote the oriented normal vector fields to $\mathcal{F}$ and $\mathcal{G}$ by $n_1$ and $n_2$ respectively. Both $n_1$ and $n_2$ are sections of the bundle $B$, and hence can be homotoped to each other by a straight line homotopy (using the linear structure of $F$ induced from $g$).  The homotopy between $n_1$ and $n_2$ defines a homotopy between the oriented plane fields of $\mathcal{F}$ and $\mathcal{G}$.
\end{proof}

\subsection{Operations for changing the foliation}\ \\

In this section, we will create a $5$-tuple of data to keep track of: a manifold $N$ with boundary sitting inside a compact collar neighborhood $M_1$ of $N$, a foliation $\mathcal{F}$ of $N$, a vector field $\mathcal{L}$ transverse to $\mathcal{F}$, a finite union $\gamma$ of $1$-manifolds that are transverse to $\mathcal{F}$, as well as a train track in $\partial N$. The vector field $\mathcal{L}$ keeps track of the homotopy class of the plane field of the foliation $\mathcal{F}$, the $1$-manifold $\gamma$ keeps track of tautness, and the train track $\tau$ records how the leaves of $\mathcal{F}$ intersect $\partial N$. 

We will construct operations that extend the foliation and simplify the $5$-tuple to eventually get a foliation of a submanifold $N \subset N' \subset M_1$ that has $\partial N'$ as leaves (i.e. $\tau'=\emptyset$) and $M_1 - N'$ is a tubular neighborhood of $\partial M_1$. During the operations, $\mathcal{L}$ and $\gamma$ always remain fixed. Then we can easily extend the foliation to $M_1$ by filling in the region $M_1 - N'$ with a product foliation, and finally we glue the boundary components $S_0$ and $S_1$ of $M_1$ together to obtain a foliation of $M$ that has $S$ as a leaf. Before reading the technical details that follows, we recommend the reader to see the proof of Theorem \ref{fullymarked} at the end of this section, to get an idea of how things fit together. \\

Throughout the section, let $(M_1, N, \tau, \mathcal{L}, \gamma)$ be the following given data: 
\begin{enumerate}
\item $N$ is a compact, orientable $3$-manifold. 
\item $\tau \subset \partial N$ is a transversely oriented train track with a branched neighborhood $N_b(\tau)$.
\item $N_b(\tau)$ induces a \emph{cornered structure} on $\partial N$ modelled as in Figure \ref{corners}. The cornered manifold $N$ comes equipped with a \emph{transverse orientation} on $\partial N$, coherent with $N_b(\tau)$. By definition, this means that the transverse orientation is transverse to $\partial_\tau N$ and tangent to the ties of $N_b(\tau)$.
\item $N$ is a cornered submanifold of the interior of the smooth manifold $M_1$, where $M_1 - N$ is contained in a regular neighborhood $N(\partial M_1) = \partial M_1 \times [0,1]$ of $\partial M_1$.
\item $\mathcal{L}$ is a vector field on $M_1$ whose flow lines induce the vertical fibration of $N(\partial M_1) = \partial M_1 \times [0,1]$. The vector field $\mathcal{L}$ is coherent with the transverse orientation of $\partial N$ as well as with the cornered structure of $\partial N$. Each vertical fiber of $\partial M_1 \times [0,1]$ intersects $N$ in a proper connected interval starting on $\partial M_1 \times 0$.
\item $\gamma$ is a set of disjoint oriented simple closed curves or properly embedded arcs in $M_1$, where the orientations of arcs at their endpoints are coherent with the transverse orientation of $\partial N$, and the restriction of $\gamma$ to $N(\partial M_1)= \partial M_1 \times [0,1]$ is a union of vertical fibers. 
\end{enumerate}

\begin{remark}
Note in (2), the cornered structure on $\partial N$ induced by $N_b(\tau)$ is meaningful even before considering a singular foliation on $N$. 
\end{remark}

\begin{definition}
A transversely oriented possibly singular foliation $\mathcal{F}$ is \textbf{compatible} with $(M_1, N,\tau, \mathcal{L}, \gamma)$ if 
\begin{enumerate}
\item (Boundary condition) $\mathcal{F}$ is a possibly singular foliation of $N$ whose restriction to the interior of $N$ has no singularities. The foliation $\mathcal{F}$ is transverse to $\partial N$ along the interior of $N_b(\tau)$, and tangential to $\partial N$ along $\partial N - N_b(\tau)$. The cusps of $N_b(\tau)$ correspond to the nodes of $\mathcal{F}$, and the edges and simple closed curve components of $\partial N_b(\tau)$ correspond to convex and concave corners for  $\mathcal{F}$. The singular foliation $\mathcal{F}|N_b(\tau)$ is transverse to the ties of the branched neighborhood $N_b(\tau)$, and their transverse orientations are coherent.
\item (Homotopy condition) $\mathcal{F}$ is transverse to $\mathcal{L}$, and its transverse orientation is coherent with the orientation of $\mathcal{L}$. 
\item (Tautness condition) $\mathcal{F}$ is transverse to $\gamma$, and its transverse orientation is coherent with the orientation of $\gamma$. Moreover every leaf of $\mathcal{F}$ intersects $\gamma$. 
\end{enumerate}
\label{compatible-foliation}
\end{definition}

\begin{definition}
Assume that $\mathcal{F}$ is compatible with $(M_1, N,\tau, \mathcal{L},\gamma)$. We say that $(\mathcal{F'}, N' , \tau')$ is an \textbf{$(\mathcal{L}, \gamma)$-extension} of $(\mathcal{F}, N , \tau)$ if $N \subset N' \subset M_1$ and $\mathcal{F}'$ is compatible with $(M_1, N',\tau', \mathcal{L}, \gamma)$ .
\end{definition} 
In what follows $\mathcal{F}'$ will be obtained by partial $I$-bundle replacement and an extension of $\mathcal{F}$ to $N' - N$.

Throughout the section, we will do a sequence of modifications to $(\mathcal{L}, \gamma)$-extend the foliation such that at the end the boundary of the manifold $N$ becomes a union of leaves.

\begin{operation}(Splitting the train track)
Let $\mathcal{F}$ be a singular foliation compatible with $(M_1, N,\tau, \mathcal{L}, \gamma)$. Let $\mathcal{G}$ be the restriction of $\mathcal{F}$ to $\partial N$, and assume that $(\mathcal{G}, \tau)$ splits to $(\mathcal{G}', \tau')$. There is an $(\mathcal{L}, \gamma)$-extension $(\mathcal{F}', N', \tau')$ of $(\mathcal{F}, N, \tau)$ such that the restriction of $\mathcal{F}'$ to $\partial N'$ is homeomorphic to $\mathcal{G}'$. \qed
\label{extension-splitting}
\end{operation}

\begin{lem}
Let $F$ be a compact orientable surface, $\tau$ be a train track on $F$ with a branched neighborhood $N_b(\tau)$, and $\mathcal{F}$ be a singular foliation carried by $\tau$. If $(\mathcal{F}, \tau)$ splits to $(\mathcal{F}', \tau')$ then $c(\mathcal{A}_{N_b(\tau')}, \tau') \leq c(\mathcal{A}_{N_b(\tau)} , \tau) $.  \qed
\label{splitting-complexity}
\end{lem}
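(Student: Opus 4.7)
My plan is to exploit the local nature of splittings. A splitting move $(\mathcal{F},\tau)\to(\mathcal{F}',\tau')$ is supported in a small disk $D \subset F$ (a neighborhood of a single large branch of $\tau$, as in Figure \ref{definition-splitting}), so outside $D$ the train tracks and their branched neighborhoods agree, and only finitely many complementary regions (those meeting $D$) can have their cusp count changed by the move. Because the complexity $c = (c_1,c_2,c_3)$ is lexicographically ordered, I would treat its three coordinates in order and only proceed to the next coordinate once equality is assumed in the previous one.

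For $c_1$, I would verify directly from the pictures in Figure \ref{definition-splitting} that each splitting moves the branched neighborhood ``inward'', so that up to isotopy one has $N_b(\tau') \subset N_b(\tau)$ and every cuspless complementary region of $\tau$ remains cuspless, possibly enlarging by absorbing newly-cuspless regions, in $\tau'$. Hence $\mathcal{A}_{N_b(\tau')}$ is, up to isotopy, a subsurface of $\mathcal{A}_{N_b(\tau)}$ whose complement is a disjoint union of cuspless complementary regions of $\tau'$. By the non-positive index condition imposed on train tracks, none of those regions can be a disk, so the hypothesis of Lemma \ref{subsurface} is satisfied and $c_1(\mathcal{A}_{N_b(\tau')}) \leq c_1(\mathcal{A}_{N_b(\tau)})$.

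For $c_2$ and $c_3$, assuming $c_1$-equality, I would close the argument by direct combinatorial inspection of the splitting moves in Figure \ref{definition-splitting}: each such move either deletes a branch (strictly reducing $c_2$), removes a smooth simple closed curve component (reducing $c_3$ while preserving $c_2$), or is a pure combinatorial re-parameterization preserving both counts. Since no splitting creates new branches or new circle components, we obtain $c_2(\tau')\leq c_2(\tau)$ and, given $c_2$-equality, $c_3(\tau') \leq c_3(\tau)$.

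The main obstacle is the active-subsurface monotonicity step: the claim that after splitting the cusps still lie in regions that were already cusped requires a case-by-case check against Figure \ref{definition-splitting}, since a priori a splitting could ``push'' a cusp into a previously cuspless region and thereby enlarge the active subsurface. Once this inward-containment property is established for each splitting type depicted in the figure, the remaining edge- and circle-count comparisons are completely routine and the lexicographic inequality follows.
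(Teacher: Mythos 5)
The paper states this lemma with an immediate \qed and provides no written argument, so there is nothing to compare your proof against; it stands on its own merits, and it is essentially sound. Your structure is the natural one: splittings are local, the branched neighborhood can be isotoped to satisfy $N_b(\tau')\subset N_b(\tau)$, and pre-existing cuspless complementary regions of $\tau$ (which lie away from the split branch and its switches) persist as cuspless complementary regions of $\tau'$. Hence $\mathcal{A}_{N_b(\tau')}\subset\mathcal{A}_{N_b(\tau)}$ and the complement consists of newly cuspless complementary regions of $\tau'$, which cannot be disks by the non-positive index condition, so Lemma \ref{subsurface} applies and $c_1$ does not increase.

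Two points deserve a bit more care. First, you assert that the complement of $\mathcal{A}_{N_b(\tau')}$ in $\mathcal{A}_{N_b(\tau)}$ is a disjoint union of cuspless complementary regions of $\tau'$; this requires the observation that the new cuspless regions of $\tau'$ are disjoint from the old cuspless regions of $\tau$ (which holds because the split is supported near a switched branch and the old cuspless regions have boundary away from that support), otherwise you would be subtracting one from the other and could not directly cite the index condition. Second, your claim that ``no splitting creates \ldots\ new circle components'' is slightly too strong: a split that eliminates a switch can merge two edges whose union happens to be a smooth simple closed curve, producing a new circle component. This does not threaten the conclusion, because any such split also strictly decreases $c_2$ (it removes the split branch and merges edges), so the lexicographic inequality survives; but the correct justification for the $c_3$ step is that, under the assumption $c_2(\tau')=c_2(\tau)$, no edge was removed, hence no circle could have been created.
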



Define a \textbf{ditch} in $\partial M$ as $A \cup L \cup B$ such that $L \subset \partial_\tau M$ is an annulus with $\partial L = \{ \alpha , \beta \}$ such that 
\begin{enumerate}
\item $A, B \subset \partial_\pitchfork M$ are annuli foliated as suspensions with $L \cap A = \alpha $ and $ L \cap B = \beta$. 
\item The points of $\alpha \cup \beta$ (respectively $\partial (A\cup B) - \{\alpha, \beta \}$) are concave (respectively convex) corners.
\end{enumerate}

\begin{operation} (Cutting the active subsurface)
	Let $\mathcal{F}$ be a singular foliation compatible with $(M_1, N,\tau, \mathcal{L}, \gamma)$ . Let $\delta$ be a homotopically essential, non $\partial$-parallel, simple closed curve in $\mathcal{A}_{N_b(\tau)}$ that is disjoint from $\tau$. Denote the component of $\partial_\tau M$ containing $\delta$ by $T$. There is an $(\mathcal{L}, \gamma)$-extension $(\mathcal{F}', N', \tau')$ of $(\mathcal{F}, N, \tau)$ such that, up to a homeomorphism identifying $\partial N$ with $\partial N'$, $\tau'$ is obtained as follows:
	\begin{enumerate}
		\item If $\delta \subset T$ is separating and for some component $T_1$ of $T \setminus \delta$ the restriction of the transverse orientation of $\partial T$ to $T_1$ always points in, then $\tau' = \tau \cup \delta$ with the transverse orientation of $\delta$ pointing out of $T_1$.
		
		\item Otherwise, $\tau'$ is obtained from $\tau$ by adding $\partial N(\delta)$ with transverse orientation pointing out of $N(\delta)$, where $N(\delta)$ is a tubular neighborhood of $\delta$ in $T$.
	\end{enumerate}
	In both cases, $c_1(\mathcal{A}_{N_b(\tau')})< c_1(\mathcal{A}_{N_b(\tau)})$.
	\label{extension-cutting-the-surface}
\end{operation}

\begin{remark}
	Note $T_1$ can not have any cusp. If $\delta$ is non-separating in $T$, then $T \setminus \delta$ is connected and has at least one cusp since $T \subset \mathcal{A}_{N_b(\tau)}$ had at least one cusp; so we are in the Case (2) above. If $\delta$ is separating in $T$, then at least one of the two components of $T \setminus \delta$ has a cusp; as a result there is at most one choice for $T_1$.
\end{remark}

\begin{proof}
	Let $L$ be the leaf of $\mathcal{F}$ containing $T$. In Case (1), do a partial $I$-bundle replacement along $L - \text{int}(T_1)$. Therefore, $\mathcal{A}_{N_b(\tau')} = \mathcal{A}_{N_b(\tau)}- \text{int}(T_1)$. If $T_1$ has negative Euler characteristic, we have $c_1(\mathcal{A}_{N_b(\tau')})<c_1(\mathcal{A}_{N_b(\tau)})$.
	
	In Case (2), do a partial $I$-bundle replacement along $L - N^\circ(\delta)$ where $N^\circ(\delta)$ is the interior of $N(\delta)$. This has the effect of creating a ditch around $\delta$. In this case, $\mathcal{A}_{N_b(\tau')}$ is a subsurface of $\mathcal{A}_{N_b(\tau)} - N^\circ(\delta)$, and Lemmas \ref{subsurface} and \ref{cut-surface} imply that
	\[ c_1(\mathcal{A}_{N_b(\tau')})\leq c_1(\mathcal{A}_{N_b(\tau)} - N^\circ(\delta)) < c_1(\mathcal{A}_{N_b(\tau)}).\]

	By the hypothesis, every added leaf is attached to one of the former leaves and hence they share a transversal. This was the motivation for considering Case (1) separately. 
\end{proof}

\begin{operation}(Embedded bigons)
Let $\mathcal{F}$ be a singular foliation compatible with $(M_1, N, \tau, \mathcal{L}, \gamma)$. Assume that some complementary region $B$ of $\tau$ is an embedded bigon. There is an $(\mathcal{L}, \gamma)$-extension $(\mathcal{F}', N', \tau')$ of $(\mathcal{F}, N, \tau)$ such that, up to a homeomorphism identifying $\partial N'$ with $\partial N$, $\tau'$ is obtained from $\tau$ by collapsing the bigon $B$. In particular $c_1(\mathcal{A}_{N_b(\tau')}) \leq c_1(\mathcal{A}_{N_b(\tau)})$ and $c_2(\tau')< c_2(\tau)$. \qed
\label{extension-embedded-bigon}
\end{operation}

\begin{operation} (Spiralling)
Let $\mathcal{F}$ be a singular foliation compatible with $(M_1, N,\tau, \mathcal{L}, \gamma)$ and $\delta \subset \mathcal{A}_{N_b(\tau)}$ be a simple closed curve that intersects $\tau$ coherently. There is an $(\mathcal{L}, \gamma)$-extension $(\mathcal{F}', N', \tau')$ of $(\mathcal{F}, N, \tau)$ where $\tau'$ is defined, up to a homeomorphism identifying $\partial N'$ with $\partial N$, as follows. Denote a small neighborhood of $\delta$ in $\partial N$ by $N(\delta)$. Let $\tau'$ be obtained from $\tau$ by deleting $ \tau \cap N(\delta)$ and adding $\partial N(\delta)$ with inward pointing transverse orientation (Figure \ref{spiralling-train-track}). In particular if  $\delta \subset \mathcal{A}_{N_b(\tau)}$ is non $\partial$-parallel then $c_1(\mathcal{A}_{N_b(\tau')})<c_1(\mathcal{A}_{N_b(\tau)})$. 
\label{extension-spiralling}
\end{operation}

\begin{figure}
\labellist
\pinlabel $\tau$ at 75 145
\pinlabel $\tau'$ at 290 90
\pinlabel $\delta$ at 100 110
\pinlabel $\delta$ at 100 70
\endlabellist
\centering
\includegraphics[width= 3 in]{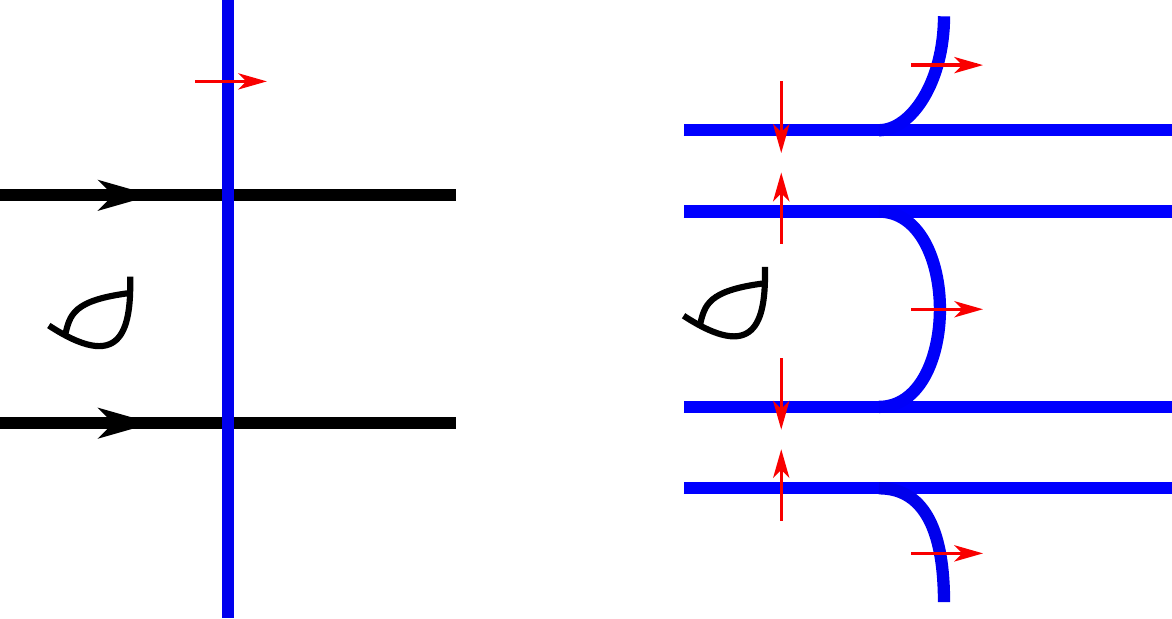}
\caption{In the left, the vertical line is part of the transversely oriented train track $\tau$, and the horizontal lines are part of the curve $\delta$ intersecting $\tau$ coherently. In the right, the new train track after spiralling is drawn.}
\label{spiralling-train-track}
\end{figure}

\begin{proof}
This is a relative version of ``turbulization'', due to Reeb. See \cite[Figure 5.3.]{gabai1983} or \cite[Figure 6]{novikov1965topology}. Spiral the leaves of $\mathcal{F}|N(\delta)$ around $N(\delta)$ to obtain the desired $\mathcal{F}'$. 

Since $\delta \subset \mathcal{A}_{N_b(\tau)}$, we have $\mathcal{A}_{N_b(\tau')} \subset \mathcal{A}_{N_b(\tau)} - N^\circ(\delta)$. By Lemma \ref{subsurface}, $c_1(\mathcal{A}_{N_b(\tau')}) \leq c_1(\mathcal{A}_{N_b(\tau)} - N^\circ(\delta))$. Note that $\delta$ is homotopically essential since it has nonzero algebraic intersection number with $\tau$. If $\delta$ is non $\partial $-parallel in $\mathcal{A}_{N_b(\tau)}$, by Lemma \ref{cut-surface} we have $c_1(\mathcal{A}_{N_b(\tau)} - N^\circ(\delta))< c_1(\mathcal{A}_{N_b(\tau)})$, which implies $c_1(\mathcal{A}_{N_b(\tau')})<c_1(\mathcal{A}_{N_b(\tau)})$.
\end{proof}

The following operation is a generalization of a construction due to the first author; see \cite[Page 476, Figures a-c]{gabai1983}.

\begin{operation}(Basic blocks)
	Let $\mathcal{F}$ be a singular foliation compatible with $(M_1, N, \tau, \mathcal{L}, \gamma)$, $F \subset \mathcal{A}_{N_b(\tau)}$ be a compact subsurface, and $(F, \tau \cap F)$ be a basic block with $\chi(F)<0$. There is an $(\mathcal{L}, \gamma)$-extension $(\mathcal{F}', N', \tau')$ of $(\mathcal{F}, N, \tau)$ where, up to a homeomorphism identifying $\partial N'$ with $\partial N$, $\tau'$ coincides with $\tau$ outside of $F$, and the restriction of $\tau'$ to $F$ is supported in a tubular neighborhood of $\partial F$ in $F$ in the following manner: Let $b$ be a component of $\partial F$ with the tubular neighborhood $N(b)$, $\partial N(b) = \{ b , b' \}$ with the transverse orientation of $b'$ pointing out of $N(b)$,
	
	\begin{enumerate}
		\item If the transverse orientation of $b$ points into $F$, then $\tau'_{|N(b)} = b $. 
		\item If the transverse orientation of $b$ points out of $F$ and $b$ has no edges to the outside or inside ($b$ is a simple closed curve component of $\tau$), then $\tau'_{|N(b)}=  b \cup b' $.
		\item If the transverse orientation of $b$ points out of $F$ and $b$ has no edges to the outside but has at least one edge to the inside, then $\tau'_{|N(b)} = b  $ or $b \cup b'$.
		\item If the transverse orientation of $b$ points out of $F$ and $b$ has at least one edge to the outside, then $\tau'_{|N(b)} = \tau_{|N(b)} \cup b' $. 
	\end{enumerate}
	See Figure \ref{basic-block-operation-figure} where items (1)-(4) are shown from left to right.
	In particular $c_1(\mathcal{A}_{N_b(\tau')})<c_1(\mathcal{A}_{N_b(\tau)})$. 
	
	If $(F,\tau \cap F)$ is a standard train tracked annulus with $\partial F = \{ b ,b'\}$, then we can find the $(\mathcal{L}, \gamma)$-extension in the following cases:
	\begin{enumerate}[i)]
		\item If the transverse orientation of $\tau \cap F$ points into $F$, then $\tau'_{|F} = b \cup b' $.
		\item If the transverse orientation of $\tau \cap F$ is mixed, $b$ is the outward pointing component of $\partial F$, and $b$ has no edges to the outside, then $\tau'_{|F}= b'$.
		\item If the transverse orientation of $\tau \cap F$ points outside $F$, and neither $b$ nor $b'$ has edges to the outside, then $\tau'_{|F} \subset b \cup b'$.
	\end{enumerate}
	\label{extension-basic-block}
\end{operation}

\begin{figure}
	\includegraphics[width=3.5 in]{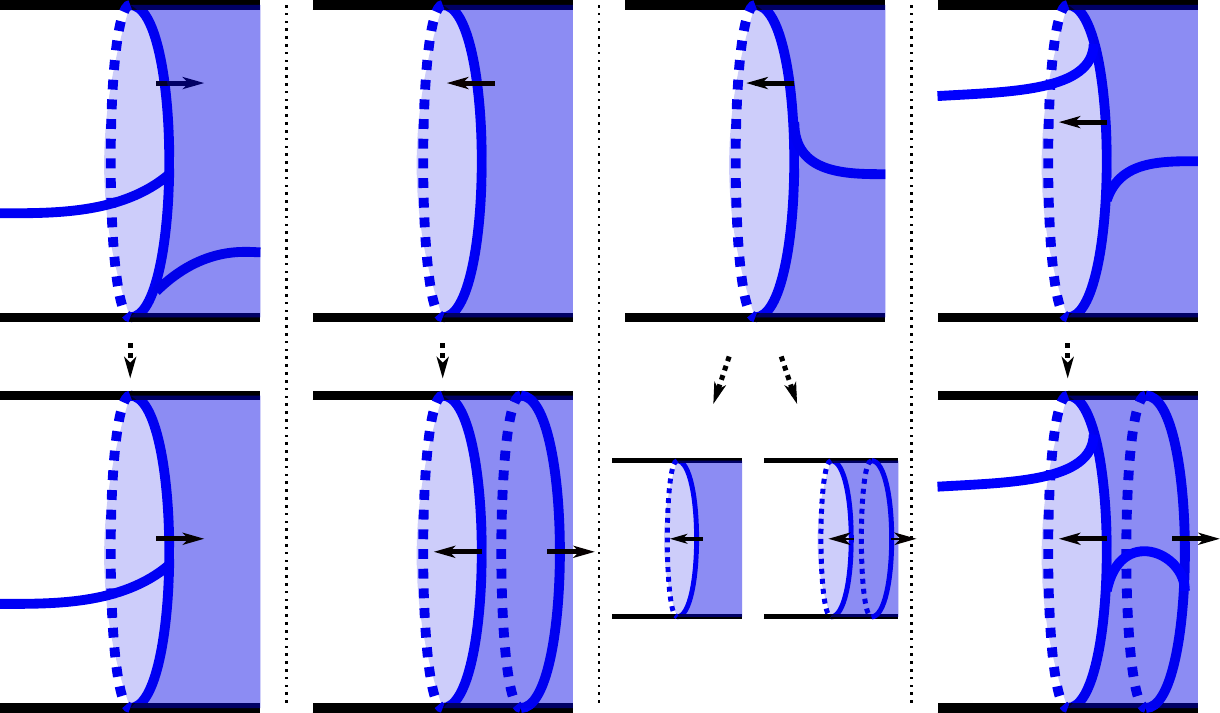}
	\caption{Four possibilities for the Operation \ref{extension-basic-block}, from left to right. The solid color indicates the surface $F$, and the middle curve is $b$. The arrows show the transverse orientation of the train track.}
	\label{basic-block-operation-figure}
\end{figure}

\begin{proof}
	Consider the case of $\chi(F)<0$, as the other case is similar.
	
	\textit{Special Case:} First we consider the case that the transverse orientation of $\tau_1= \tau \cap F$ points into $F$ along $ \partial F$; so we are in Case (1) for all boundary components of $F$. Let $\gamma_i \subset \tau$ be the arcs going between different boundary components of $F$ as in the definition of a basic block. Let $\gamma_i \times [0,1]$ be the rectangle corresponding to $\gamma_i$ in the branched neighborhood $N_b(\tau)$ of $\tau$. Denote the sides $\gamma_i \times 0$ and $ \gamma_i \times 1$ by $\gamma_i^+$ and $\gamma_i^-$ respectively, and assume that the transverse orientation points from $\gamma_i^-$ to $\gamma_i^+$. Denote the tangential part of $F$ by $T_1$. To visualize the construction easier, we think of the rectangles $\gamma_i \times [0,1]$ as perpendicular to both $T_1$ and the rectangle $e \times [0,1]$ for every edge $e \in \partial F$ adjacent to $\gamma_i$ (Figure \ref{extension-basic-block-figure}). 
	
	\begin{figure}
		\labellist
		\pinlabel $\gamma_i^-$ at 50 90
		\pinlabel $\gamma_i^+$ at 110 135
		\endlabellist
		\centering
		\includegraphics[width= 2 in]{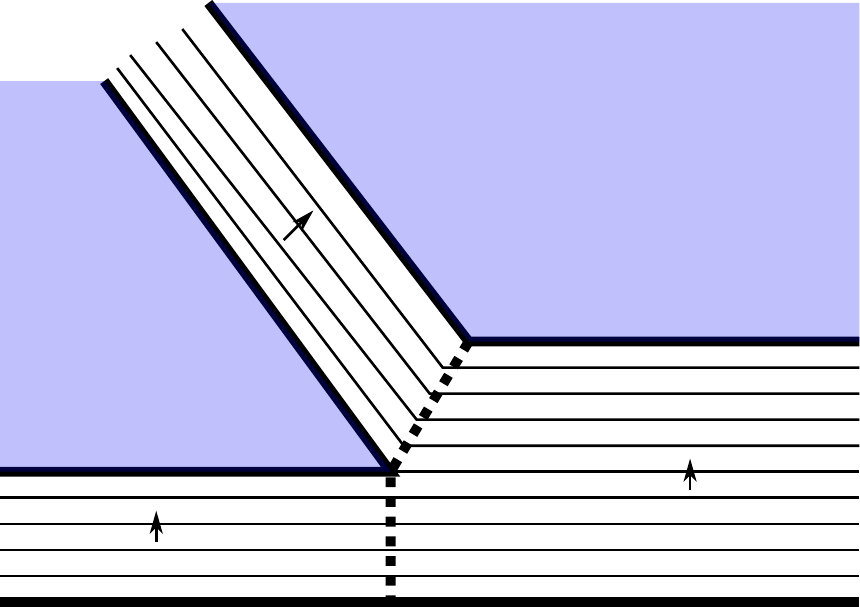}
		\caption{The relative position of the arcs $\gamma_i^+$ and $\gamma_i^-$. The arrows show the transverse orientation of the train track, and the solid color indicates the tangential part $T_1$. The broken dashed line is a singular tie.}
		\label{extension-basic-block-figure}
	\end{figure}
	
	Let $N(\gamma_i^-)$ be a tubular neighborhood of $\gamma_i^-$ in $T_1$ with boundary $\gamma_i^- \cup \gamma_i^*$ and set 
	\[ T_2 = \overline{T_1 - \bigcup_i N(\gamma_i^-)}. \]
	Let $t$ be a component of $\partial T_2$ that contains at least one of $\gamma_j^*$ or $\gamma_j^+$ for some $j$. It follows from the definition of a basic block that $t$ is a union of two types of transversely oriented arcs in an alternate fashion:
	\begin{enumerate}
		\item one of the arcs $\gamma_i^+$ or $\gamma_i^*$, running between two distinct components of $\partial F$, or
		\item an arc corresponding to a subset of $\partial F$.
	\end{enumerate}
	Let $I$ be a closed interval. Attach a copy of $T_2 \times I$, with the product foliation, to $N$ by identifying $T_2 \times 0$ with $T_2 \subset \partial N$. After the attachment, there are \emph{walls} on top of each of the arcs $\gamma_i^*  $ and $\gamma_i^-$. By definition, the wall above $\gamma_i^*$ is $\gamma_i^* \times I \subset T_2 \times I$. The wall above $\gamma_i^-$ is the union $\big(\gamma_i \times [0,1] \big) \cup_{\gamma_i^+} \big( \gamma_i^+ \times I \big)$. Here the first term comes from the rectangle corresponding to $\gamma_i$, the second term comes from the restriction of $T_2 \times I$ to $\gamma_i^+$, and the two terms are glued along their common arc $\gamma_i^+$. Let $J$ be a closed interval with initial and terminal points $i(J)$ and $t(J)$ respectively. Now we connect the walls above $\gamma_i^*$ and $\gamma_i^-$ by attaching $N(\gamma_i^-) \times J$ equipped with the product foliation to $N \cup_{T_2} (T_2 \times I)$ such that 
	\begin{enumerate}
		\item $N(\gamma_i^-) \times i(J)$ is identified with $N(\gamma_i^-) \subset T_1$, and
		\item $\gamma_i^- \times J$ (respectively $\gamma_i ^* \times J$) is identified with the wall above $\gamma_i^-$ (respectively $\gamma_i^*$) .
	\end{enumerate}
	This operation replaces $\tau$ with $\tau' = \tau - \cup_i \gamma_i$. In particular, $c_1(\mathcal{A}_{N_b(\tau')})<c_1(\mathcal{A}_{N_b(\tau)})$.  Note we did not use the consistent spiraling (Condition 2 in the definition of a basic block) in this special case.\\
	
	\textit{General Case:} Let $B$ be the union of components of $\partial F$ whose transverse orientation points out of $F$. Remove a tubular, possibly cornered, neighborhood of $B$ from $F$ to obtain $F'$. See Figure \ref{basic-block-nbhd}, where $F'$ is the part of the surface that lies below the broken dashed line $b^+$. Do the operations as in the \textit{Special Case} for $F'$ by pretending that the transverse orientation points into $F'$, to obtain a foliation $\mathcal{F}_1$. The accumulation of leaves of $\mathcal{F}_1$ creates a `wall' on top of $b^+$, with top boundary component $\hat{b}$. If $b \in B$ corresponds to Item (2) or (4) in the statement, then the new train track in a neighborhood of $b$ has the claimed description. 
	
	Now let $b \in B$ correspond to Item (3) in the statement, so $b$ has no edges to the outside. Let $N(b)$ (respectively $N'(b)$) be the annulus cobounded by $b^*$ and $b^+$ (respectively $b_*$). Spiral the leaves of $\mathcal{F}_1$ intersecting $N(b)$ around $N(b)$ to converge to an annulus $A$ such that $\partial A= \{ \hat{b} \cup b_{\infty} \}$ where $b_\infty$ is a leaf of the induced foliation on $N'(b)$. Such leaf $b_\infty$ exists since we assumed that $b$ has no edges to the outside. The new train track has either one copy of $b$ (when $b_* \neq b_\infty$), or no copy of $b$ (otherwise). If the new train track has no copy of $b$, then use Operation \ref{extension-cutting-the-surface} for the curve $b$. The purpose of using Operation \ref{extension-cutting-the-surface} is to make sure $c_1(\mathcal{A}_{N_b(\tau)})$ reduces when $\chi(F)<0$ (if the complementary region $R$ to the outside of $b$ has a cusp, and if we remove $b$ from $\tau'$ altogether then $R$ is merged with $F'$ to form a complementary region $R'$. But now $R'$ has a cusp and hence is included in the active subsurface of $\tau'$.)  The result is one of the two pictures in Item (3). This completes the construction of $(\mathcal{F}', \tau')$.
	
	
	%
	%
	
	\begin{figure}
		\labellist
		\pinlabel $b^+$ at 170 52
		\pinlabel $b^*$ at 170 65
		\pinlabel $b_*$ at 170 110
		\endlabellist
		\begin{center}
			\includegraphics[width=2 in]{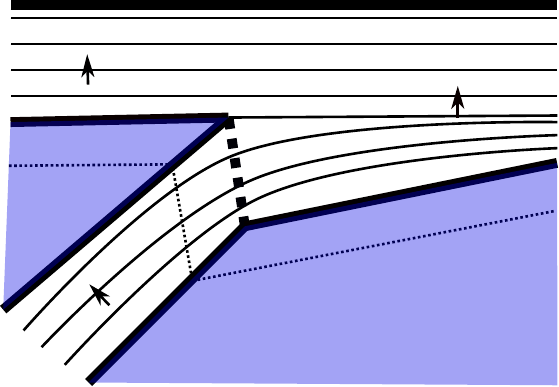}
		\end{center}
		\caption{The arrows indicate the transverse orientation of the train track, and the solid color shows the tangential part $T_1$.}
		\label{basic-block-nbhd}
	\end{figure}

\end{proof}

\begin{remark}
Let $R$ be a three-dimensional Reeb component and $\alpha_i \subset \partial R$ be a set of disjoint curves parallel to the core of $R$. Let $B \subset M_1$ be a bad solid torus whose foliation is homeomorphic to the foliation obtained by shaving a neighborhood of all $\alpha_i$ in $R$. Assume that $B$ intersects $S$ in annuli $A_i$ for $1 \leq i \leq n$ where the induced foliation on each $A_i$ is a two-dimensional Reeb component. If we use Operation \ref{extension-basic-block} to tangentially extend the foliation along $A_i$, we may reproduce the Reeb component $R$. Therefore, the constructed foliation may not be taut. This is why we did an initial preparation to ensure that no bad solid torus exists.
\end{remark}

\begin{operation} (Spinning)
Let $\mathcal{F}$ be a singular foliation compatible with $(M_1, N, \tau, \mathcal{L}, \gamma)$, and $\partial N$ be incompressbile. Assume that $\tau$ is a union of disjoint simple closed curves (e.g. Figure \ref{spiral}, left). There is an $(\mathcal{L}, \gamma)$-extension $(\mathcal{F}', N', \tau')$ of $(\mathcal{F}, N, \tau)$ with $\tau' = \emptyset$.
\label{extension-spinning}
\end{operation}
%
%

\begin{proof} 
		Any simple closed curve in $\tau$ is homotopically nontrivial in $\partial N$, since otherwise some complementary region to $\tau$ has to be a disk. Since $\partial N$ is incompressible, any simple closed curve in $\tau$ is homotopically nontrivial in $N$ as well. Pick a component $\alpha$ of $\tau$, and let $A$ be the maximal regular neighborhood of $\alpha$ such that $A \subset \partial_\pitchfork N$. Denote the holonomy of $A$ by $\mu$, and let $\partial A = a \cup b$, where the points of $a$ (respectively $b$) are concave (respectively convex) corners. See Figure \ref{spiral}. We show how to replace $\tau$ by $\tau - \alpha$. 
		
		\begin{figure}
			\begin{minipage}{5.5 cm}
				\centering
				\includegraphics[width=2 in]{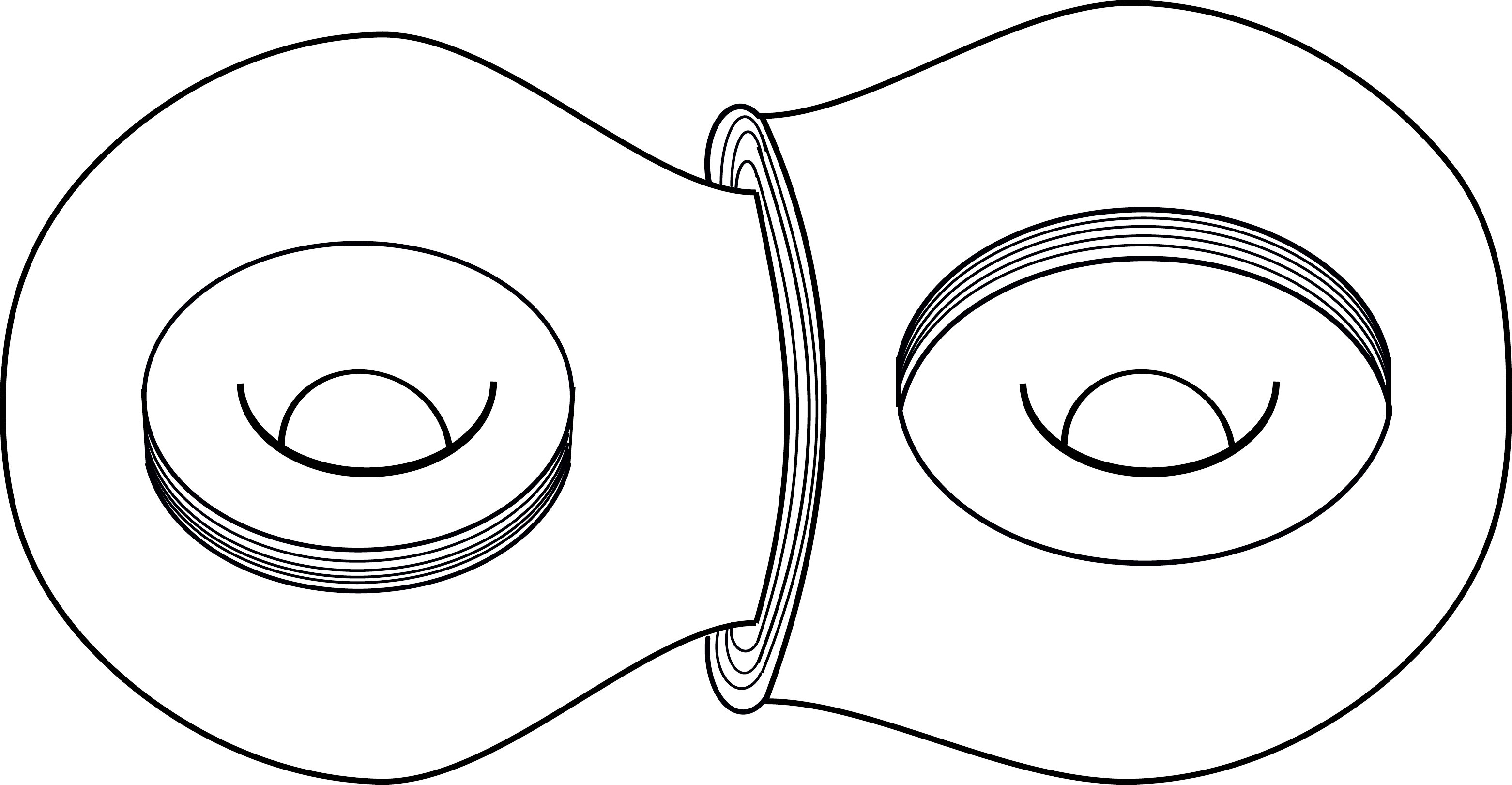}
			\end{minipage}
			\begin{minipage}{5.5 cm}
				\labellist
				\pinlabel $L_3$ at 75 27
				\pinlabel $a$ at 62 27 
				\pinlabel $b$ at 47 27
				\endlabellist
				
				\centering
				\includegraphics[width=2 in]{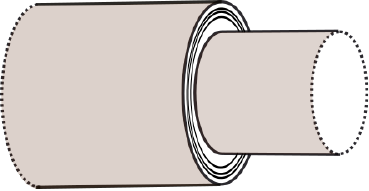}
			\end{minipage}
		
		\caption{}
			\label{spiral}
		\end{figure}
	
	Let $L_1$ be the leaf of the foliation $\mathcal{F}$ containing $a$ and set $L_2:= L_1 \setminus \setminus a$. Let $L_3$ be the connected component of $L_2$ which is on the tangent boundary side of $a$ (Figure \ref{spiral}, right). Note that $L_3$ can be a proper subset of $L_2$ or all of it. Take a small tubular neighborhood $N(a)$ of $a$ in $L_3$ with $\partial N(a)= a \cup c$, and set $L:= L_3 - N^\circ(a)$.
	
	 We do a partial $I$-bundle replacement for $L$ and foliate it to have certain holonomies on different boundary components, in such a way that the new holonomy along $c$ becomes conjugate with the new holonomy of $A$. Consider three different cases, where in each case the $I$-bundle exists by Lemma \ref{transversefoliations}: 
\begin{enumerate}
	\item $L$ is not compact planar. Let the foliated $I$-bundle over $L$ have holonomy $\eta$ on $c$ and identity on all other boundary components, where $\eta$ = $\mu '$ and $\mu '$ is obtained from $\mu$ by trivial Denjoy blow-up along all components of $L \cap A$.
	
	\item $L$ is compact planar and some component $d$ of $\partial L - c$ is disjoint from $A$. Let the foliated $I$-bundle over $L$ have holonomy $\eta$ on $c$ and $\eta ^{-1}$ on $d$ and identity on all other boundary components, where $\eta = \mu'$.
	
	\item $L$ is compact planar and all components of $\partial L - c$ intersect $A$. Let $d$ be one such boundary component. Assume for the moment that $d$ is equal to neither $a$ nor $b$, and denote by $f$ and $g$ the holonomies of the two parts of $A$ separated by $d$. Let the foliated $I$-bundle over $L$ have holonomy $\eta$ on $c$ and $\eta ^{-1}$ on $d$ and identity on all other boundary components. Choose $\eta$ such that $\eta$ is conjugate to $f' \cdot \eta \cdot g'$. Here $f'$ and $ g'$ can be obtained from $f$ and $g$ by trivial Denjoy blow-up of all components of $L \cap (A - d)$. Such a homeomorphism $\eta$ exists by Lemma \ref{concatenation}. 
	
	Now consider the case $d=a$. Choose $\eta$ such that $\eta$ is conjugate to $\eta \cdot g'$, where $g'$ is obtained from $\mu$ after trivial Denjoy blow-up of all components of $\partial L \cap (A - a)$. Such a homeomorphism $\eta$ exists by Lemma \ref{concatenation}. The case $d = b$ is similar.
\end{enumerate}
The partial $I$-bundle replacement along $L$ creates a transverse annulus $B$ above $c$ with holonomy $\eta$. Attach a foliated copy of $\text{annulus} \times I $ with holonomy $\eta$ along $A \cup N(a) \cup B$ to remove $A$ and $B$ from the transverse boundary. This replaces $\tau$ with $\tau - \alpha$.

	Repeat the above procedure for other components of $\tau$.
\end{proof}

\begin{thm}
Let $\mathcal{F}$ be a singular foliation compatible with $(M_1, N, \tau, \mathcal{L}, \gamma)$. There is a sequence of triples $(\mathcal{F}_i, N_i , \tau_i)$ for $1 \leq i \leq n$ starting with $(\mathcal{F}, N , \tau)$ and ending at $(\mathcal{F}_n, N_n, \tau_n = \emptyset)$ such that each term is an $(\mathcal{L}, \gamma)$-extension of the previous term obtained by one of the Operations  \ref{extension-splitting}, \ref{extension-cutting-the-surface}, \ref{extension-embedded-bigon}, \ref{extension-spiralling}, \ref{extension-basic-block}, and \ref{extension-spinning}. 
\label{sequence}
\end{thm}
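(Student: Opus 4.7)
The plan is to induct on the complexity $c(\mathcal{A}_{N_b(\tau)}, \tau) = (c_1, c_2, c_3)$ of Definition \ref{train-track-complexity} under the lexicographic order, reducing the pair $(N, \tau)$ at each stage by a sequence of the six Operations until $\tau$ becomes a disjoint union of simple closed curves, and then finishing with Operation \ref{extension-spinning}.

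First I would apply Lemma \ref{reduction} to $(\mathcal{F}|\partial N, \tau)$, lifting each splitting step to an $(\mathcal{L}, \gamma)$-extension via Operation \ref{extension-splitting} and each bigon-collapse via Operation \ref{extension-embedded-bigon}. By Lemma \ref{splitting-complexity} and the complexity statement in Operation \ref{extension-embedded-bigon}, the coordinate $c_1$ is non-increasing under these moves, and $c_2$ strictly drops at every bigon-collapse. The resulting triple $(\mathcal{F}', N', \tau')$ then satisfies at least one of the three alternatives (1)--(3) of Lemma \ref{reduction}.

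Next I would run the case analysis: in alternative (1), apply Operation \ref{extension-cutting-the-surface} to the homotopically essential, non $\partial$-parallel curve $\delta$, strictly decreasing $c_1$; in alternative (2), apply Operation \ref{extension-spiralling} to the coherently intersecting curve $\gamma$, which strictly decreases $c_1$ since $\gamma$ is non $\partial$-parallel; in alternative (3), write each component $K$ of $\mathcal{A}_{N_b(\tau')}$ as a basic block $F$ with standard train tracked annuli attached to its boundary, and apply Operation \ref{extension-basic-block} successively, using the $\chi(F) < 0$ version on the basic block itself (which strictly decreases $c_1$) and the sub-cases (i)--(iii) on each attached standard train tracked annulus. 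Since the lexicographic complexity $(c_1, c_2, c_3)$ is well-ordered and strictly decreases with each round of this procedure, the process terminates after finitely many iterations with the active subsurface being empty.

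When $\mathcal{A}_{N_b(\tau)} = \emptyset$, every component of $\overline{\partial N - N_b(\tau)}$ has no cusps, so $\tau$ can have no vertices and is therefore a disjoint union of smooth simple closed curves. The boundary $\partial N$ is incompressible in our intended application (it consists of copies of the incompressible surface $S$ of Corollary \ref{incompressible}), so I would conclude by applying Operation \ref{extension-spinning}, producing an $(\mathcal{L}, \gamma)$-extension with $\tau_n = \emptyset$ that serves as the terminal term of the sequence.

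The main obstacle will be the bookkeeping in Case (3): one needs to verify that each component of the active subsurface genuinely admits the decomposition as a basic block plus attached standard train tracked annuli, that applying Operation \ref{extension-basic-block} to each piece in sequence does not reintroduce previously eliminated bigons or active substructure elsewhere, and that the three sub-cases of Operation \ref{extension-basic-block} for standard train tracked annuli cover the transverse orientation configurations produced by Lemma \ref{reduction}. A related subtlety is that Operation \ref{extension-basic-block} strictly reduces $c_1$ only in the $\chi(F) < 0$ instance; in the purely annular situation one must instead appeal to the secondary coordinates $c_2$ and $c_3$ of the complexity to continue the induction down to the base case.
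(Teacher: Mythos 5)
Your proposal follows essentially the same strategy as the paper: a lexicographic induction on the complexity $c(\mathcal{A}_{N_b(\tau)},\tau)$, using Lemma \ref{reduction} to sort into three cases, applying Operations \ref{extension-cutting-the-surface}, \ref{extension-spiralling}, or \ref{extension-basic-block} accordingly (with Operations \ref{extension-splitting} and \ref{extension-embedded-bigon} interleaved to realize the moves of Lemma \ref{reduction}), and finishing with Operation \ref{extension-spinning} once $\tau$ has been reduced to a disjoint union of simple closed curves.

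Two points need correction. First, the induction cannot terminate ``with the active subsurface being empty'': by Notation \ref{active subsurface}, $\mathcal{A}_{N_b(\tau)}$ contains $N_b(\tau)$ and hence is nonempty whenever $\tau$ is nonempty, so your terminal state as written would force $\tau=\emptyset$ and render Operation \ref{extension-spinning} pointless. What the complexity descent actually yields is a state in which $c_1(\mathcal{A}_{N_b(\tau)})$ is minimal and $\tau$ has no vertices, so that $\tau$ is a disjoint union of smooth simple closed curves and $\mathcal{A}_{N_b(\tau)}$ is a union of smooth annuli (namely $N_b(\tau)$ itself); that is the correct hypothesis for Operation \ref{extension-spinning}. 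Second, the obstacle you flag in Case (3) --- that the sub-cases (i)--(iii) of Operation \ref{extension-basic-block} must be applied to the attached standard train tracked annuli in an order for which their hypotheses are actually satisfied --- is genuine and is exactly the detail the paper supplies. After collapsing the negative-Euler-characteristic basic block, one first removes every inward-pointing standard train tracked annulus via sub-case (i), then works outermost-first on the mixed ones whose outward boundary has no edges to the outside via sub-case (ii), and only then handles the remaining purely outward ones via sub-case (iii), each of which is now outermost. Each step strictly decreases $c_2$ while $c_1$ stays minimal, so the lexicographic complexity continues to drop. Since Lemma \ref{reduction} already guarantees the generalized-basic-block decomposition in Case (3), the other concern you raise there is not an additional burden.
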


\begin{proof}
Recall the complexity function $c$ from Definition \ref{train-track-complexity}. In what follows, we abbreviate $c(\mathcal{A}_{N_b(\tau_i)}, \tau_i)$ by $c(\tau_i)$, where $\mathcal{A}_{N_b(\tau_i)}$ is the active subsurface of $\tau_i$ (Definition \ref{active subsurface}). Complete the proof by repeatedly going through \textbf{1)}-\textbf{3)} below.\\

\textbf{1)} Given $\tau_i$, if the train track $\tau_i$ has an embedded bigon: Use Operation \ref{extension-embedded-bigon} to find an $(\mathcal{L}, \gamma)$-extension $(\mathcal{F}_{i+1}, N_{i+1}, \tau_{i+1})$ of $(\mathcal{F}_i, N_i , \tau_i)$ such that  $c(\tau_{i+1}) <  c(\tau_i)$. After doing this finitely many times, there are no embedded bigons left.\\

\textbf{2)} Assume that $\tau_i$ has no embedded bigon, and $\tau_i$ has at least one vertex. By Lemma \ref{reduction} (for the surface $F = \mathcal{A}_{N_b(\tau_i)}$), using a sequence of splitting and collapsing bigons, we may obtain $(\mathcal{F}', \tau')$ from $(\mathcal{F}_i, \tau_i)$ such that at least one of the following holds:
\begin{enumerate}
\item There is a homotopically essential, non $\partial$-parallel, simple closed curve $\delta \subset \mathcal{A}_{N_b(\tau_{i})}$ that is disjoint from $\tau'$.
\item There is a homotopically essential, non $\partial$-parallel, simple closed curve $\gamma \subset \mathcal{A}_{N_b(\tau_{i})}$ intersecting $\tau'$ coherently. 
\item For each component $K$ of $\mathcal{A}_{N_b(\tau')}$, $(K, \tau' \cap K)$ is a generalized basic block.
\end{enumerate}

Set $\tau_{i+1} = \tau'$. By Operations \ref{extension-embedded-bigon} and \ref{extension-splitting} and Lemma \ref{splitting-complexity}, there is an $(\mathcal{L}, \gamma)$-extension $(\mathcal{F}_{i+1}, N_{i+1}, \tau_{i+1})$ of $(\mathcal{F}_i, N_i , \tau_i)$ with $\tau' = \tau_{i+1}$ and $ c(\tau_{i+1}) \leq c(\tau_i)$. 
Now we show how to define $(\mathcal{F}_{i+2}, \tau_{i+2})$ in each of the above three cases: \\

Assume that (1) (respectively (2)) holds. Use Operation \ref{extension-cutting-the-surface} (respectively \ref{extension-spiralling}) to find an $(\mathcal{L}, \gamma)$-extension $(\mathcal{F}_{i+2}, N_{i+2}, \tau_{i+2})$ of $(\mathcal{F}_{i+1}, N_{i+1} , \tau_{i+1})$ such that $c(\tau_{i+2})< c(\tau_{i+1})$.

Now assume that (3) holds. If the generalized basic block has negative Euler characteristic, then use Operation \ref{extension-basic-block} to find an $(\mathcal{L}, \gamma)$-extension $(\mathcal{F}_{i+2}, N_{i+2}, \tau_{i+2})$ of $(\mathcal{F}_{i+1}, N_{i+1} , \tau_{i+1})$ such that $c(\tau_{i+2})< c(\tau_{i+1})$.

At this point, each component of the active subsurface is a union of standard train tracked annuli attached together along their boundaries. If there is any standard train tracked annulus with inward transverse orientation, use Operation \ref{extension-basic-block} to reduce the number of edges of the train track, $c_2$. Then start with a mixed standard train tracked annulus (Figure \ref{standard-train-tracked-annulus}, top-right) whose outward boundary has no edges to the outside, and use Operation \ref{extension-basic-block} to reduce the number of edges of the train track. Now only standard train tracked annuli with outward transverse orientation are left, and we may use Operation \ref{extension-basic-block} to reduce the number of edges of the train track again. This is possible since each such annulus has no edges to the outside. At this point the train track is a union of disjoint simple closed curves. 

\textbf{3)} Assume that $\tau_i$ has no vertices, and is non-empty. Use Operation \ref{extension-spinning} to find an $(\mathcal{L}, \gamma)$-extension $(\mathcal{F}_{i+1}, N_{i+1}, \tau_{i+1})$ of $(\mathcal{F}_i, N_i , \tau_i)$ such that $\tau_{i+1}= \emptyset$.\\

This algorithm terminates since $c(\mathcal{A}_{N_b(\tau)}, \tau)$ is strictly decreasing.
\end{proof}

\begin{proof}[Proof of Theorem \ref{fullymarked}]
	
Let $\mathcal{F}$ be a taut foliation on the closed hyperbolic 3-manifold $M$, and $S$ be a positively oriented fully marked surface. By Proposition \ref{bad annulus}, after possibly replacing $S$ by a homologous surface $S'$ and modifying the foliation $\mathcal{F}$ to $\mathcal{F}'$ using $I$-bundle replacements, we may assume that $\mathcal{F'}|S'$ has the compact-free separatrix property, and $(\mathcal{F'},S')$ has no bad solid tori. Note that the plane fields of $\mathcal{F}$ and $\mathcal{F}'$ are homotopic since they share a common transverse vector field, by Observation \ref{I-bundle-replacement} and Proposition \ref{homotopic}. To simplify the notation, we keep using $(\mathcal{F}, S)$ instead of $(\mathcal{F}', S')$.\\

\textbf{Step 1:} Fix a complete system of coherent transversals $\gamma $ for $(\mathcal{F}, S)$, which exists by Proposition \ref{transversals}. By Proposition \ref{vector-field}, there exists a vector field $\mathcal{L}$ on $M$ that is coherently transverse to both $S$ and $\mathcal{F}$. Pick a collar neighborhood $S \times [0,1]$ of $S = S \times \frac{1}{2}$ in $M$ such that the flow lines of $\mathcal{L}$ induce the vertical fibration inside $S \times [0,1]$. By adjusting $\gamma$, we may assume that the restriction of $\gamma$ to $S \times [0,1]$ is a union of vertical fibers. Let $M_1$ be the closed complement of $S$ in $M$ with the induced foliation $\mathcal{F}_1$. The collar neighborhood of $S$ in $M$ naturally induces a collar neighborhood $\partial M_1 \times [0,1]$ of $\partial M_1$ in $M_1$. Push $S \subset M$ in both sides to obtain a manifold $N_2 \subset M_1$ homeomorphic to $M_1$ whose boundary consists of the union of $S \times \frac{1}{4}$ and $S \times \frac{3}{4}$. Let $\mathcal{F}_2$ be the induced foliation on $N_2$. By enlarging $N_2$ to $N_3$ and $\mathcal{F}_2$ to $\mathcal{F}_3$ we obtain 
\begin{enumerate}
\item $N_2 \subset N_3 \subset M_1$.
\item A train track $\tau  \subset \partial N_3$ and a branched neighborhood $N_b(\tau)$ of $\tau$ such that $N_b(\tau)$ induces a cornered structure on $\partial N_3$, and a transverse orientation on $\partial N_3$ coherent with $N_b(\tau)$.
\item The vector field $\mathcal{L}$ is coherent with the transverse orientation of $\partial N_3$ as well as with the cornered structure of $\partial N_3$. Each vertical fiber of $\partial M_1 \times [0,1]$ intersects $N_3$ in a proper connected interval starting on $\partial M_1 \times 0$, assuming that $\partial M_1$ is identified with $\partial M_1 \times 1$. 
\item If we denote the restriction of $\mathcal{L}$ and $\gamma$ to $M_1$ by $\mathcal{L}_1$ and $\gamma_1$ respectively, and set $\tau_3= \tau$, then $\mathcal{F}_3$ is compatible with $(M_1, N_3, \tau_3, \mathcal{L}_1, \gamma_1)$.
\end{enumerate}
\vskip 8pt

\textbf{Step 2:}  By Theorem \ref{sequence}, there is a sequence of triples $(\mathcal{F}_i, N_i , \tau_i)$ for $3 \leq i \leq n$ with $\tau_n = \emptyset$ such that each term is an $(\mathcal{L}_1, \gamma_1)$-extension of the previous term. In particular $\mathcal{F}_n$ is tangential to $\partial N_n$, and $\partial N_n$ is transverse to the vertical fibration of $\partial M_1 \times [0,1]$.\\

\textbf{Step 3:} Enlarge $N_n$ and $\mathcal{F}_n$ by adding leaves transverse to the vertical fibration of $\partial M_1 \times [0,1]$ to obtain a foliation $\mathcal{G}_0$ of $M_1$. Glue back the two copies of $S$ in $\partial M_1$ (equipped with the foliation $\mathcal{G}_0$) to obtain the foliation $\mathcal{G}$ on $M$. The foliation $\mathcal{G}$ is taut since $\gamma$ is a complete system of coherent transversals for it. The oriented tangent plane fields of $\mathcal{F}$ and $\mathcal{G}$ are homotopic since both of them are transverse to the common vector field $\mathcal{L}$ (Proposition \ref{homotopic}). By construction, $S$ is a union of leaves of $\mathcal{G}$.  
\end{proof}
\begin{remark}
There are analogous results to Theorem \ref{fullymarked} for taut foliations on atoroidal sutured manifolds. In this case, recall that each boundary component of a fully marked surface is either transverse to the foliation or is a leaf.
\end{remark}
\section{Conjectures}
As mentioned in the introduction, we expect Theorem \ref{fullymarked} to fail in general without allowing to change $S$ within its homology class. See Conjecture \ref{generalcase}.


Our proof of the fully marked surface theorem uses the flexibility of $I$-bundle replacement which is a generalisation of \emph{Denjoy blow-up} \cite{denjoy1932courbes}. The operation of Denjoy blow-up can create foliations that are not topologically conjugate to any $C^2$ foliation \cite{denjoy1932courbes}. We expect the analogue of the theorem to be false for $C^2$ taut foliations. 
\begin{conj}
There exists a closed hyperbolic 3-manifold $M$ supporting a $C^2$ taut foliation $\mathcal{F}$ with a fully marked surface $S$, such that there exists no $C^2$ taut foliation $\mathcal{G}$ on $M$ with oriented plane field homotopic to $\mathcal{F}$ such that $S$ is homologous to a union of leaves of $\mathcal{G}$. 
\end{conj}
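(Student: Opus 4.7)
The plan is to exhibit an explicit triple $(M, \mathcal{F}, S)$ where the flexibility that Theorem \ref{fullymarked} uses -- $I$-bundle replacement and the concatenation trick of Lemma \ref{concatenation} -- is genuinely needed at some place that cannot be carried out in $C^2$. The natural starting point is one of the counterexample manifolds from \cite{firstpaper}: on those manifolds there is an integral class $a \in H^2(M;\mathbb{Z})$ with $x^*(a)=1$ satisfying parity, whose only taut-foliation realizations are those produced by the construction of Theorem \ref{fullymarked}. One would then refine the choice of $\mathcal{F}$ and $S$ so that every run of the construction is forced to build a holonomy with Denjoy-type dynamics, which is ruled out in $C^2$ by classical rigidity.

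Concretely, the argument would proceed as follows. First, suppose a $C^2$ taut foliation $\mathcal{G}$ exists with $S$ homologous to a union of leaves and oriented plane field homotopic to that of $\mathcal{F}$. Use the homotopy of plane fields, together with the Euler class equality forced by the fully marked condition, to pin down the homology class and Thurston-norm-minimizing representatives of the leaves of $\mathcal{G}$ near $S$, and hence the topology of $M \setminus \setminus S$. Next, focus on Operation \ref{extension-spinning} (Spinning): the homeomorphism $\eta$ produced via Lemma \ref{concatenation} is built as an infinite alternating concatenation of copies of $u, v, u^{-1}, v^{-1}$, whose germ at the fixed endpoint is \emph{a priori} only $C^0$. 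The plan is to arrange the initial holonomies of $\mathcal{F}$ along one specific bad annulus so that such an $\eta$ must appear as a genuine holonomy of $\mathcal{G}$ along a closed leaf circle in $S$, and so that the corresponding $u$ and $v$ are $C^2$-contractions whose derivatives and second derivatives at the fixed point are prescribed. Then Denjoy's theorem together with Kopell's lemma and Sergeraert's rigidity, applied to the germ of $\eta$ and to the commutator relations forced by the local structure of $\mathcal{G}$, should yield a contradiction with $\mathcal{G}$ being $C^2$.

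The main obstacle is that both $S$ and $\mathcal{F}$ have genuine degrees of freedom during the construction: Proposition \ref{bad annulus} allows $S$ to be swapped for any homologous fully marked surface, and $\mathcal{F}$ may be modified by $I$-bundle replacement without changing its plane-field homotopy class. A candidate counterexample must therefore be robust under all of these moves simultaneously. To control them uniformly, I would look for a numerical invariant -- most plausibly a combination of Godbillon--Vey type secondary classes with a translation-number invariant of the one-dimensional holonomy pseudogroup along the fully marked locus -- that is preserved (or changes in a controlled way) under every operation in Sections 4 and 7 but takes a value for the starting $(M, \mathcal{F})$ that is classically incompatible with $C^2$ regularity. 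Designing such an invariant and exhibiting a specific hyperbolic $M$ on which the expected value is attained is where I expect the real difficulty to lie; the combinatorial reduction to Spinning and Lemma \ref{concatenation} is a comparatively direct consequence of the machinery already developed here.
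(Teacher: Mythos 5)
This statement is a \emph{conjecture} in the paper, not a theorem: the authors offer no proof of it, and it is presented explicitly as an open problem motivated by the role of Denjoy-type non-$C^2$ holonomy in their construction. So there is no proof in the paper against which to compare your attempt. What you have written is not a proof either, but a research program, and it has several load-bearing gaps. Most seriously, the conjecture quantifies over \emph{all} $C^2$ taut foliations $\mathcal{G}$ with plane field homotopic to $\mathcal{F}$ and $S$ homologous to a union of leaves; nothing forces such a $\mathcal{G}$, should it exist, to arise from the algorithm of Section 7 or to exhibit the holonomy structure that Operation \ref{extension-spinning} and Lemma \ref{concatenation} produce. Reducing the problem to ``every run of the construction is forced to build Denjoy holonomy'' does not suffice, because the construction is one way to produce such a $\mathcal{G}$, not the only way.

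The remaining steps are aspirational rather than carried out: no explicit hyperbolic $M$ is exhibited; the claim that the counterexample manifolds of \cite{firstpaper} have classes ``whose only taut-foliation realizations are those produced by the construction of Theorem \ref{fullymarked}'' is not established and is not what \cite{firstpaper} proves (those manifolds carry classes realized by \emph{no} taut foliation); and the proposed invariant mixing Godbillon--Vey with translation numbers of the holonomy pseudogroup is neither defined nor shown to behave as required under $I$-bundle replacement, the surface moves of Proposition \ref{bad annulus}, or the operations of Section 7. Citing Denjoy, Kopell, and Sergeraert identifies plausible tools, but to turn this into a proof you would need at minimum: a concrete $M$ and $\mathcal{F}$, an argument that works for arbitrary $\mathcal{G}$ (not only algorithm outputs), and an actual obstruction that is computed to be nonzero for your example and shown to vanish for any $C^2$ $\mathcal{G}$ satisfying the hypotheses.
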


\bibliographystyle{plain}
\bibliography{foliationreference2}

\end{document}